\newtheorem{theorem}{Theorem}
\newtheorem{claim}[theorem]{Claim}
\newtheorem{adefinition}[theorem]{Definition}
\newenvironment{definition}{\begin{adefinition}\rm}{\end{adefinition}}
\newtheorem{aexample}[theorem]{Example}
\newtheorem{lemma}[theorem]{Lemma}
\newtheorem{proposition}[theorem]{Proposition}
\newtheorem{aremark}[theorem]{Remark}
\newenvironment{remark}{\begin{aremark}\rm}{\end{aremark}}
\numberwithin{equation}{section} \numberwithin{theorem}{section}
\DeclareMathOperator\Tr{Tr}
\newcommand{\N}{\mathbb{N}}
\newcommand{\NN}{\mathcal{N}}
\newcommand{\G}{\mathcal{G}}
\newcommand{\E}{\mathbf{E}}
\newcommand{\V}{\mathbf{Var}}
\newcommand{\R}{\mathbb{R}}
\newcommand{\C}{\mathbb{C}}
\newcommand{\al}{\alpha}
\newcommand{\M}{\mathcal{M}_n}
\newcommand{\MM}{\mathcal{M}_{n,m,k}}
\newcommand{\MMy}{\mathcal{M}_{n,m,k}(\mathbf{y})}
\newcommand{\HH}{\mathcal{H}}
\newcommand{\Y}{{Y}}
\newcommand{\YY}{\mathcal{Y}_n}
\newcommand{\y}{\mathbf{y}}
\newcommand{\pp}{\mathbf{p}}
\newcommand{\jj}{\mathbf{j}}
\newcommand{\ii}{\mathbf{i}}
\newcommand{\q}{\mathbf{q}}
\newcommand{\s}{\mathbf{s}}
\newcommand{\aaa}{{a}}
\newcommand{\bb}{{b}}
\newcommand{\ttt}{\mathbf{t}}
\begin{document}

\title{CLT for linear
eigenvalue statistics for a tensor product version of sample covariance matrices}
\author{A. Lytova \\
{\small {\textit{Department of Mathematical and Statistical Sciences,
University of Alberta}} }\\
{\small {\textit{Edmonton, Alberta, Canada, T6G 2G1}} }\\
{\small {\textit{Institute of Mathematics and Informatics, Opole University}} }\\
{\small {\textit{Opole, Poland, 45 052}} }\\
{\small {E-mail: alytova@math.uni.opole.pl } }\\
}
\date{}
\maketitle

\begin{abstract}
For $k,m,n\in \N$, we consider $n^k\times n^k$  random matrices of the form
$$
\MMy=\sum_{\alpha=1}^m\tau_\alpha {Y_\alpha}Y_\alpha^T,\quad
\Y_\al=\mathbf{y}_\al^{(1)}\otimes...\otimes\mathbf{y}_\al^{(k)},
$$
where $\tau _{\alpha }$, $\al\in[m]$, are  real numbers and
$\mathbf{y}_\al^{(j)}$, $\al\in[m]$, $j\in[k]$,  are i.i.d. copies of  a normalized isotropic random vector
$\mathbf{y}\in \mathbb{R}^n$.
For every fixed $k\ge 1$, if the Normalized Counting Measures of
$\{\tau _{\alpha }\}_{\alpha}$ converge weakly as $m,n\rightarrow \infty$,  $m/n^k\rightarrow
c\in \lbrack 0,\infty )$ and
$\y$ is a {good} vector in the sense of
Definition \ref{d:good}, then the Normalized Counting Measures of
eigenvalues of $\MMy$ converge weakly in probability to a
non-random limit found in \cite{Ma-Pa:67}. For $k=2$, we  define a subclass of good vectors  $\y$ for which the centered linear eigenvalue
statistics  $n^{-1/2}\Tr \varphi(\mathcal{M}_{n,m,2}(\mathbf{y}))^\circ$ converge in distribution to
a Gaussian random variable, i.e., the Central Limit Theorem is valid.
\end{abstract}

\maketitle

\section{Introduction: Problem and Main Result}
\label{s:intro}

For every $k\in \mathbb{N}$, consider random vectors of the form
\begin{equation}
\label{Y}
\Y=\mathbf{y}^{(1)}\otimes...\otimes\mathbf{y}^{(k)}\in (\mathbb{R}^n)^{\otimes k},
\end{equation}
where $\mathbf{y}^{(1)}$,..., $\mathbf{y}^{(k)}$ are i.i.d. copies of {\it a normalized isotropic} random vector
$\mathbf{y}=(y_1,...,y_n)\in \mathbb{R}^n$,
\begin{equation}
\label{iso}
\mathbf{E}\{y_j\}=0, \quad\mathbf{E}\{y_i y_j\}=\delta_{ij}n^{-1}, \quad i,j\in[n],
\end{equation}
$[n]=\{1,...,n\}$. The components of $\Y$ have the form
\begin{equation*}
 {\Y}_ \mathbf{j}=y_{j_1}^{(1)}\times...\times y_{j_k}^{(k)},
\end{equation*}
where we use the notation $\mathbf{j}$ for $k$-multiindex:
\begin{equation*}
  \mathbf{j}=\{j_1,...,j_k\},\quad j_1,...,j_k\in [n].
\end{equation*}
For every $m\in \mathbb{N}$, let $\{\Y_\alpha\}_{\alpha =1}^{m}$ be i.i.d. copies of $\Y$, and let $\{\tau _{\alpha }\}_{\alpha =1}^{m}$ be a collection
of real numbers. Consider an $n^k\times n^k$ real symmetric random matrix corresponding to a  normalized isotropic random vector
$\mathbf{y}$,
\begin{equation}
\label{M}
{\mathcal{M}_n=\MM=\MM(\y)=\sum_{\alpha=1}^m\tau_\alpha {{\Y}_\alpha}{{\Y}_\alpha}^T}.
\end{equation}
We suppose that
\begin{equation}\label{mn}
m\rightarrow \infty\quad \text{and}\quad m/n^k\rightarrow c\in(0,\infty) \quad \text{as}\quad n\rightarrow\infty.
\end{equation}
Note that  $\MM$ can be also written in the form
\begin{equation}
\mathcal{M}_{n,m,k}={B}_{n,m,k}T_m {B}_{n,m,k}^T, \label{MT}
\end{equation}
where
\begin{equation*}
{B}_{n,m,k}=(\Y_1\;\;\Y_2\;\;...\;\;\Y_m),\quad T_m=\{\tau_\alpha \delta_{\alpha\beta}\}_{\alpha,\beta=1}^m.
\end{equation*}

Such matrices with  $T_m\ge 0$ (not necessarily diagonal) are known as sample covariance matrices. The asymptotic behavior of their spectral statistics is well studied when all entries of $Y_\al$ are independent.
Much less is known in the case when columns $Y_\al$ have dependence in their structure.

The model constructed in (\ref{M})  appeared  in {\it the quantum information theory}, and was introduced to random matrix theory by Hastings (see \cite{AHH:12,Ha:09,Ha:07}). In  \cite{AHH:12}, it was studied as a quantum analog of the classical
 probability problem on the allocation of $p$ balls among $q$ boxes (a quantum
 model of data hiding and correlation locking scheme). In particular,  by combinatorial analysis
of moments of   $n^{-k}\Tr \M^p$, $p\in \mathbb{N}$, it was proved that for the special cases of random vectors $\mathbf{y}$ uniformly distributed on the unit sphere in $\mathbb{C}^n$ or having Gaussian components, the expectations of the Normalized Counting
Measures of eigenvalues of the corresponding matrices converge to the Marchenko-Pastur
law \cite{Ma-Pa:67}.
The main goal of the present paper is to extend this result of \cite{AHH:12}
to a wider class of matrices $M_{n,m,k}(\y)$  and also to prove the Central Limit Theorem  for linear eigenvalue statistics  in the case $k=2$.

Let $\{\lambda^{(n)}_{l}\}_{l=1}^{n^k}$ be the eigenvalues of $\M$
counting
their multiplicity, and introduce their Normalized Counting Measure (NCM) $%
N_{n}$, setting for every $\Delta \subset \mathbb{R}$
\begin{equation*}
N_{n}(\Delta )=\mathrm{Card}\{l\in [n^k]:\lambda^{(n)} _{l}\in
\Delta \}/n^k.  
\end{equation*}
Likewise, define the NCM $\sigma _{m}$ of $\{\tau _{\alpha }\}_{\alpha
=1}^{m}$,
\begin{equation}
\sigma _{m}(\Delta )=\mathrm{Card}\{\alpha\in \lbrack
m]:\tau_\alpha\in\Delta\}/m.  \label{sigm}
\end{equation}
We assume that the sequence $\{\sigma _{m}\}_{m=1}^\infty$ converges
weakly:
\begin{equation}
\lim_{m\rightarrow \infty }\sigma _{m}=\sigma, \; \sigma (\mathbb{R})=1.
\label{sigma}
\end{equation}
In the case $k=1$, there is a number of papers devoted to the convergence of the NCMs of the eigenvalues of $\mathcal{M}_{n,m,1}$ and related matrices  (see \cite{Adam:11}, \cite{Ba-Zh:08},  \cite{G-N-T:01}, \cite{Ma-Pa:67}, \cite{Pa-Pa:09}, \cite{Y} and references therein).  In particular, in \cite{Pa-Pa:09} the convergence of NCMs of eigenvalues of $\mathcal{M}_{n,m,1}$ was proved in the case when corresponding vectors $\{Y_\alpha\}_\alpha$ are "good vectors" in the sense of the following definition.

\begin{definition}
\label{d:good} We say that
a normalized isotropic vector $\mathbf{y}\in\mathbb{R}^n$ is \textit{good,} if
  for every $n\times n$ complex matrix $H_n$ which does
not depend on $\mathbf{y}$, we have
\begin{equation}
\mathbf{Var}\{(H_n\mathbf{y},\mathbf{y})\}\leq ||H_{n}||^2\delta_{n},\quad
\delta_{n}=o(1),\;n\rightarrow\infty,  \label{good}
\end{equation}
where $||H_{n}||$ is the Euclidean operator norm of $H_{n}$.
\end{definition}

Following the scheme of the proof proposed in \cite{Pa-Pa:09}, we show that despite the fact that the number of independent parameters, $kmn=O(n^{k+1})$ for $k\ge 2$ is much less than the number of matrix entries, $n^{2k}$,
the limiting distribution of eigenvalues still obeys the Marchenko-Pastur law. We have:

\begin{theorem}
\label{t:NCM}
Fix $k\ge 1$. Let $n$ and $m$ be positive integers satisfying (%
\ref{mn}),  let $\{\tau _{\alpha }\}_{\al}$ be real
numbers satisfying (\ref{sigma}), and let $\mathbf{y}$ be a good vector
 in the sense of Definition \ref{d:good}.
Then there exists a non-random measure $N$ of total mass $1$ such that
the NCMs $N_{n}$ of the eigenvalues of $\M$ (\ref{M}) converge weakly in probability
to $N$ as $n\rightarrow\infty$.
The  Stieltjes transform $f$ of $N$,
\begin{equation}
f(z)=\int \frac{N(d\lambda )}{\lambda -z},\;\Im z\neq 0,  \label{f}
\end{equation}%
  is the  unique solution of the functional equation
\begin{equation}
zf(z)=c-1-c\int(1+\tau f(z))^{-1}\sigma(d\tau)  \label{MPE}
\end{equation}
 in the class of  analytic in $\mathbb{C\setminus \mathbb{%
\ R}}$ functions  such that $\Im f(z)\Im z\geq 0,\;\Im z\neq 0. $
\end{theorem}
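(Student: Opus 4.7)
The plan is to proceed via the Stieltjes transform, following the scheme introduced by Marchenko--Pastur and adapted to good vectors in \cite{Pa-Pa:09}. Set $G(z) = (\mathcal{M}_n - zI)^{-1}$ and $g_n(z) = n^{-k}\Tr G(z)$. It suffices to show that for each $z$ with $\Im z \neq 0$, $g_n(z) \to f(z)$ in probability, where $f$ is the unique Nevanlinna solution of (\ref{MPE}); weak convergence in probability of $N_n$ to $N$ then follows by the standard Stieltjes--transform continuity theorem, and uniqueness of $f$ in the prescribed class is classical for the Marchenko--Pastur equation. The convergence splits into a concentration step, $g_n - \E\{g_n\} \to 0$, and identification of the limit of $\E\{g_n\}$.

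The concentration of $g_n$ is a standard martingale-difference argument. With the filtration $\F_\al = \sigma(Y_1,\ldots,Y_\al)$ and $\E_\al = \E\{\cdot \mid \F_\al\}$, write $g_n - \E\{g_n\} = \sum_{\al=1}^m (\E_\al - \E_{\al-1}) g_n$; since removing a single rank-one term $\tau_\al Y_\al Y_\al^T$ changes $\Tr G$ by at most $1/|\Im z|$ by Cauchy interlacing, one obtains $\V\{g_n\} = O(m/n^{2k}) = O(n^{-k})$, uniformly in the distribution of the $Y_\al$.

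For the expectation, use $\mathcal{M}_n G = I + zG$ to write
\begin{equation*}
\sum_{\al=1}^m \tau_\al (G Y_\al, Y_\al) = n^k + z \Tr G,
\end{equation*}
and apply the Sherman--Morrison identity $(G Y_\al, Y_\al) = (G_\al Y_\al, Y_\al)/(1 + \tau_\al (G_\al Y_\al, Y_\al))$, where $G_\al$ is the resolvent with the $\al$-th term removed and hence independent of $Y_\al$. The quadratic form $(G_\al Y_\al, Y_\al)$ must then concentrate around $n^{-k}\Tr G_\al$, which by another use of interlacing differs from $g_n$ by $O(n^{-k})$. Dividing by $n^k$, invoking $m/n^k \to c$ and $\sigma_m \Rightarrow \sigma$, and controlling the resulting errors yields (\ref{MPE}) in the limit; the sign property $\Im f\, \Im z \geq 0$ is transparent from $\Im G(z)/\Im z \geq 0$.

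The main obstacle is the tensor quadratic-form concentration: for $Y = \y^{(1)}\otimes\cdots\otimes\y^{(k)}$ with $\y^{(j)}$ i.i.d.\ copies of a good vector and any deterministic matrix $H$ of size $n^k \times n^k$, one must show
\begin{equation*}
\V\{(HY,Y)\} \leq \|H\|^2 \, \widetilde{\delta}_n, \quad \widetilde{\delta}_n = o(1).
\end{equation*}
I would prove this by iterating the law of total variance over the $k$ tensor factors. Conditioning on $\y^{(2)},\ldots,\y^{(k)}$ reduces $(HY,Y)$ to a quadratic form $(\tilde H \y^{(1)}, \y^{(1)})$ with an effective matrix $\tilde H$ satisfying $\|\tilde H\| \leq \|H\|\prod_{l\geq 2}\|\y^{(l)}\|^2$, to which (\ref{good}) applies to bound the conditional variance; its conditional mean $n^{-1}\Tr \tilde H$ is itself a quadratic form in $\y^{(2)}$ with a next-level effective matrix, and so on. The delicate part is propagating control of $\E\{\|\y\|^{2p}\}$ for the prefactors that arise along the way, which must be extracted from the variance bound (\ref{good}) applied with $H = I$ (giving boundedness of $\E\{\|\y\|^4\}$) and iterated on block-diagonal test matrices to reach the required moment order. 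Once this lemma is in hand, the Sherman--Morrison step produces the fixed point equation (\ref{MPE}) after passing to the limit, completing the proof.
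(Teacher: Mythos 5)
Your proposal matches the paper's proof essentially step for step: reduction via the Stieltjes transform, concentration of $g_n$ by martingale differences using the $1/|\Im z|$ perturbation bound (this is Proposition~\ref{p:sa}), the Sherman--Morrison/rank-one identity leading to $zg_n = m/n^k - 1 - n^{-k}\sum_\alpha A_\alpha^{-1}$, and the crucial tensor quadratic-form variance bound obtained by conditioning over the $k$ tensor factors, which is exactly the content of Lemma~\ref{l:Ayyl} (whose martingale inequality at second order is your law of total variance, and which uses only $\mathbf{E}\{\|\mathbf{y}\|^4\}\le C$ from (\ref{good}) with $H=I$, so no higher-moment iteration is needed). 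The only step you do not mention is the truncation argument for unbounded $\tau_\alpha$ (since (\ref{sigma}) does not impose boundedness), but otherwise your route is the paper's.
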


We use the notation $\int$ for the integrals over $\R$. Note that in \cite{T:16} there was proved a version of this statement for a deformed version of $M_{n,m,2}$.

It follows from  Theorem {\ref{t:NCM}} that if
\begin{equation}
\mathcal{N}_n[\varphi]=\sum_{j=1}^{n^k} \varphi(\lambda^{(n)}_{j})  \label{Nn}
\end{equation}
is the \emph{linear eigenvalue statistic} of $\M$  corresponding to a
bounded continuous \emph{test function} $\varphi: \mathbb{R} \to
\mathbb{C}$, then we have in probability
\begin{equation}
\lim_{n\rightarrow \infty}n^{-k}\mathcal{N}_n[\varphi]=\int \varphi(\lambda)dN(\lambda).
\label{Nnl}
\end{equation}
This can be viewed as an analog of the Law of Large Numbers in  probability
theory for (\ref{Nn}). Since the limit is non-random, the next natural step is to investigate the fluctuations  of $\mathcal{N}_n[\varphi]$.
This corresponds to the question of validity of the Central Limit Theorem (CLT). The main goal of this paper is to prove the CLT for the
 linear eigenvalue statistics of the tensor version of the sample covariance matrix $\mathcal{M}_{n,m,2}$ defined in (\ref{M}).

There is a considerable number of papers on the CLT for  linear eigenvalue
statistics of sample covariance matrices $\mathcal{M}_{n,m,1}$ (\ref{MT}), where all entries of the matrix ${B}_{n,m,1}$
are independent (see \cite{Ba-Si:04,Ba-W-Zh:10,Ba-Me:13,Cab:01,Gi:01,Ly-Pa:08,Me-Pe:16,Na-Yao:13,P-Zh:08,Sh:11} and references therein).
 Less is known in the case where the components of vector $\mathbf{y}$  are dependent. In \cite{GLPP:13},
 the CLT was proved for  linear statistics of eigenvalues of $\mathcal{M}_{n,m,1}$, corresponding to some special class of isotropic vectors defined below.
\begin{definition}
\label{d:unc} The distribution of a random vector $\mathbf{y}\in\mathbb{R}^n$
is called \textit{unconditional} if its components $\{y_j\}_{j=1}^n$ have
the same joint distribution as $\{\pm y_j\}_{j=1}^n$ for any choice of signs.
\end{definition}

\begin{definition}
\label{d:very} We say that
  normalized isotropic vectors $\mathbf{y}\in\mathbb{R}^n$, $n\in \N$, are \textit{very good} if they have unconditional
distributions, their mixed moments up to the fourth order do not depend on $i, j, n$, there exist $n$-independent $a,b\in\mathbb{R}$
such that as $n\rightarrow\infty$,
\begin{align}
& a_{2,2}:=\mathbf{E}\{y_{ i}^2y_{ j}^2\}=n^{-2}+an^{-3}+O(n^{-4}),\quad
i\neq j,  \label{a22} \\
&\kappa_{4}:=\mathbf{E}\{y_{ j}^4\}-3a_{2,2}=bn^{-2}+O(n^{-3}),\quad
  \label{k4}
\end{align}
and for every $n\times n$ complex matrix $H_n$ which does not depend on $\mathbf{y}$,
\begin{equation}
\mathbf{E}\{|(H_n\mathbf{y},\mathbf{y})^{\circ}|^4\}\leq C||H_{n}||^{4}%
n^{-2}.  \label{m4Ayy}
\end{equation}
\end{definition}

Here and in what follows we use the notation $\xi^\circ=\xi-\E\{\xi\}$.

\medskip

An important step in proving the CLT for  linear eigenvalue statistics is the asymptotic analysis of their
variances $\mathbf{Var}\{\mathcal{N}_n[\varphi]\} := \mathbf{E}\{|\mathcal{N}^\circ_n[\varphi]|^2\}$,
in particular, the proof of the bound
\begin{equation}
\mathbf{Var}\{\mathcal{N}_n[\varphi]\}\leq C_n||\varphi||^2_\mathcal{H},\label{VarN}
\end{equation}
where $||...||_\mathcal{H}$ is a functional norm and $C_n$ depends only on
$n$. This bound determines the
normalization factor in front of $\mathcal{N}^\circ_n[\varphi]$ and the class $\mathcal{H}$ of the
test functions for which the CLT, if any, is valid.
It appears that for many random matrices normalized so that there
exists a limit of their NCMs, in particular for sample covariance
matrices $\mathcal{M}_{n,m,1}$, the variance of the linear eigenvalue statistic corresponding to a smooth enough test function does not grow with $n$,
and the CLT is valid
for $\mathcal{N}^\circ_n[\varphi]$ itself without any $n$-dependent normalization factor in front.
Consider the test functions $\varphi:\mathbb{R}\rightarrow\mathbb{R}$ from the Sobolev space $\HH_s$, possessing the norm
\begin{equation}
||\varphi ||_{s}^{2}=\int (1+|t|)^{2s}|\widehat{\varphi }(t)|^{2}dt,\quad
\widehat{\varphi }(t)=\int e^{it\theta}\varphi (\theta)d\theta.  \label{Hs}
\end{equation}
The following statement was proved in \cite{GLPP:13} (see Theorem 1.8 and Remark 1.11):

\begin{theorem}
\label{t:clt} Let  $m$ and $n$ be positive integers satisfying (%
\ref{mn}) with $k=1$,  let $\{\tau _{\alpha }\}_{\alpha =1}^{m}$ be a collection of real
 numbers satisfying (\ref{sigma}) and
\begin{equation}
\sup_m\int \tau^4 d\sigma_m(\tau)<\infty,  \label{m4}
\end{equation}%
and let $\mathbf{y}$ be a very good vector in the sence of Definition \ref{d:very}. Consider matrix $\mathcal{M}_{n,m,1}(\mathbf{y})$  (\ref{M}) and
the linear statistic of its
eigenvalues $\mathcal{N}_{n}[\varphi ]$ (\ref{Nn})  corresponding to a test function $\varphi \in \HH_{s}$,  $s >2$.
 Then $\{\mathcal{N}_{n}^{\circ }[\varphi ]\}_n$ converges in distribution to a Gaussian random variable with zero
mean and the variance $V[\varphi ]=\lim_{\eta\downarrow 0}V_\eta[\varphi ]$, where
\begin{align*}
&V_\eta[\varphi ]=
\frac{1}{2\pi ^{2}}\int \int \Re \big[L(z_{1},{z_{2}}%
)-L(z_{1},\overline{z_{2}})\big](\varphi (\lambda _{1})-\varphi (\lambda
_{2}))^{2}d\lambda _{1}d\lambda _{2}
\\
 &\quad\quad\quad+ \frac{(a+b)c}{\pi ^{2}}\int\tau ^{2}\bigg(%
\Im \int \frac{f^{\prime }(z_{1})}{(1+\tau f(z_{1}))^{2}}\varphi
(\lambda _{1})d\lambda _{1}\bigg )^{2}d\sigma (\tau ),
\\
&L(z_{1},z_{2})=\frac{{\partial ^{2}}}{\partial z_{1}\partial z_{2}}\log \frac{\Delta f}{\Delta z},
\end{align*}
$z_{1,2}=\lambda _{1,2}+i\eta$, $\Delta f=f(z_{1})-f(z_{2})$, $\Delta
z=z_{1}-z_{2}$,
and $f$ given by (\ref{MPE}).
\end{theorem}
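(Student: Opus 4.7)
The plan is to follow the Stieltjes-transform/characteristic-function method developed in \cite{Ly-Pa:08} and \cite{Pa-Pa:09} for sample-covariance-type matrices. The first step is to reduce the question from linear statistics to the resolvent $G(z)=(\mathcal{M}_{n,m,1}-z)^{-1}$ and its trace $g_n(z)=\Tr G(z)$. Since $\varphi\in\HH_s$ with $s>2$, a Parseval-type identity combined with the formula $\Im G(\lambda+i\eta)=\eta\,G(\lambda+i\eta)G^{*}(\lambda+i\eta)$ yields
\[
\mathcal{N}_n^\circ[\varphi]=\frac{1}{\pi}\lim_{\eta\downarrow 0}\int \Im g_n^\circ(\lambda+i\eta)\,\varphi(\lambda)\,d\lambda,
\]
so the CLT for $\mathcal{N}_n^\circ[\varphi]$ reduces to a functional CLT for $g_n^\circ$ along the contour $\Im z=\eta$, with errors controlled uniformly in $n$ and $\eta$; the Sobolev index $s>2$ is what justifies the $\eta\downarrow 0$ limit in $V_\eta[\varphi]$.

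The core of the argument is the characteristic-function method: set $Z_n(x)=\mathbf{E}\{e^{ix\mathcal{N}_n^\circ[\varphi]}\}$ and aim at the asymptotic ODE $Z_n'(x)=-x\,V[\varphi]\,Z_n(x)+o(1)$, whose unique solution in a neighborhood of zero is the Gaussian characteristic function $e^{-x^2V[\varphi]/2}$. To close this equation one studies the mixed quantity $Y_n(x,z)=\mathbf{E}\{g_n^\circ(z)\,e^{ix\mathcal{N}_n^\circ[\varphi]}\}$. The basic algebraic tool is the rank-one (Sherman--Morrison) identity
\[
G=G^{(\al)}-\frac{\tau_\al\,G^{(\al)}Y_\al Y_\al^{T} G^{(\al)}}{1+\tau_\al(G^{(\al)}Y_\al,Y_\al)},
\]
where $G^{(\al)}$ is the resolvent of $\mathcal{M}_{n,m,1}$ with the $\al$-th rank-one term removed, so that $Y_\al$ is independent of $G^{(\al)}$. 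The good-vector bound (\ref{good}) and the sharper fourth-moment bound (\ref{m4Ayy}) imply that $(G^{(\al)}Y_\al,Y_\al)$ concentrates around $n^{-1}\Tr G^{(\al)}$; summing over $\al$ and using (\ref{MPE}) recovers the deterministic leading term of $g_n$ and produces the coefficient in front of $Z_n(x)$ on the right-hand side of the ODE for $Y_n$.

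For the limiting variance the contribution splits into two parts. A universal part, shared with the classical sample covariance model, yields the double integral containing the kernel $L(z_1,z_2)$: differentiating (\ref{MPE}) in $z$ allows one to express $\partial f/\partial z$ and the divided difference $f(z_1)-f(z_2)$ over $z_1-z_2$, after which the limiting covariance of $g_n^\circ(z_1)$ and $g_n^\circ(z_2)$ comes out as the mixed derivative of $\log((f(z_1)-f(z_2))/(z_1-z_2))$. A non-universal part, proportional to $a+b$, arises from the second cumulant of the quadratic forms $(GY_\al,Y_\al)^\circ$: the identities (\ref{a22}) and (\ref{k4}) produce the prefactor $(a+b)n^{-2}$ times products of resolvent traces, which upon summation over $\al$ with weight $\tau_\al^2$ yields the $\int \tau^2 d\sigma(\tau)$ term. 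The unconditional property (Definition \ref{d:unc}) kills the odd-moment contributions that would otherwise obstruct the Gaussian limit.

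The main obstacle is precisely the absence of independence among the entries of $\mathbf{y}$: Gaussian integration-by-parts and the standard cumulant expansion are unavailable, and must be replaced by the systematic use of (\ref{m4Ayy}) together with the unconditional symmetry. Concretely, one has to verify that all higher cumulants of $g_n^\circ$ vanish in the limit and that the remainders in the $Y_n$-equation are $o(1)$ uniformly on compact sets of $x$; both steps amount to careful bookkeeping of quadratic and quartic forms in $\mathbf{y}$ multiplied by products of resolvent entries, with the operator-norm factor in (\ref{good}) and (\ref{m4Ayy}) providing the required decay. A secondary technical point is taking $\eta\downarrow 0$ in $V_\eta[\varphi]$, which requires Hölder regularity of $\Im f$ up to the edges of the Marchenko--Pastur support and is where the hypothesis $s>2$ enters quantitatively.
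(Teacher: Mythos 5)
Your overall strategy --- the characteristic-function method, the Sherman--Morrison rank-one perturbation, concentration of $(G^{(\alpha)}Y_\alpha,Y_\alpha)$ around $n^{-1}\Tr G^{(\alpha)}$, and the split of the limiting variance into a universal $L(z_1,z_2)$ part plus an $(a+b)$-part --- is exactly the scheme of \cite{GLPP:13}, which the paper cites rather than reproves, and which Section~\ref{s:proof} recycles essentially verbatim for the $k=2$ analogue (Theorem~\ref{t:main}). So the plan is the right one.

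There is, however, one substantive misstep. You claim that passing $\eta\downarrow0$ in $V_\eta[\varphi]$ "requires Hölder regularity of $\Im f$ up to the edges of the Marchenko--Pastur support," and you also suggest the reduction identity
$\mathcal{N}_n^\circ[\varphi]=\pi^{-1}\lim_{\eta\downarrow0}\int\Im g_n^\circ(\lambda+i\eta)\,\varphi(\lambda)\,d\lambda$
as the entry point. Neither is what is actually used, and the edge-regularity route would fail to give a clean argument. The correct mechanism (see (\ref{phi_y})--(\ref{limny}) in the paper) is to first replace $\varphi$ by the Poisson smoothing $\varphi_\eta=P_\eta*\varphi$, for which the exact identity $\mathcal{N}_n[\varphi_\eta]=\pi^{-1}\int\varphi(\mu)\,\Im\gamma_n(\mu+i\eta)\,d\mu$ holds; then prove the CLT at each fixed $\eta>0$; and finally remove the smoothing using the a priori variance bound $\mathbf{Var}\{\mathcal{N}_n[\varphi]-\mathcal{N}_n[\varphi_\eta]\}\le C\|\varphi-\varphi_\eta\|_s^2$ (the $k=1$ case of Lemma~\ref{l:apriory}) together with $\|\varphi-\varphi_\eta\|_s\to0$. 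The Sobolev index $s>2$ enters through this variance estimate, not through any pointwise smoothness of $\Im f$ at the edges. Relatedly, you do not need to "verify that all higher cumulants of $g_n^\circ$ vanish" as a separate task: once the asymptotic ODE $Z_n'(x)=-xV[\varphi]Z_n(x)+o(1)$ with $Z_n(0)=1$ is established, the Gaussian limit follows directly, and the only estimates required are on the remainders in the equation for $Y_n(x,z)$, for which (\ref{m4Ayy}), (\ref{good}), and unconditionality suffice.
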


Here we prove an analog of Theorem \ref{t:clt} in the case $k=2$. We start with establishing  an analog of (\ref{VarN}) in general case $k\ge 1$:

\begin{lemma}
\label{l:apriory} Let $\{\tau
_{\alpha }\}_{\alpha}$ be a collection of real numbers satisfying (\ref{sigma}) and (\ref{m4}), and let $\mathbf{y}$ be a normalized isotropic vector having
an unconditional distribution, such that
\begin{align}
& a_{2,2}=n^{-2}+O(n^{-3}),\quad   \kappa_{4}=O(n^{-2}).\label{a220}
\end{align}
Consider the corresponding matrix $\M$  (\ref{M}) and a linear statistic of its eigenvalues $\mathcal{N}_{n}[\varphi ]$. Then
for every $\varphi \in \HH_{s}$,  $s>5/2$, and for all sufficiently large $m$ and $n$, we have
\begin{equation}
\mathbf{Var}\{\mathcal{N}_{n}[\varphi ]\}\leq C n^{k-1}||\varphi ||_{s }^{2},
\label{apriory}
\end{equation}%
where $C$ does not depend on $n$ and $\varphi$.
\end{lemma}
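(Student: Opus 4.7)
\medskip

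\textbf{Proof proposal.}
My plan is to reduce everything to a variance bound on the characteristic-function trace via Fourier inversion, and then estimate the latter by a martingale decomposition over the columns $Y_\alpha$, crucially using the fact that the good-vector property lifts from the factors $\mathbf{y}^{(j)}$ to the tensor vector $Y$ itself.

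\emph{Step 1 (Fourier reduction).} Write $\varphi(\lambda)=(2\pi)^{-1}\int e^{it\lambda}\widehat\varphi(t)\,dt$ so that $\mathcal{N}_n[\varphi]=(2\pi)^{-1}\int\widehat\varphi(t)\,\Tr U_n(t)\,dt$ with $U_n(t):=e^{it\M}$. Setting $V_n(t):=\mathbf{Var}\{\Tr U_n(t)\}$, the Cauchy--Schwarz inequality applied first in $(t_1,t_2)$ and then against the weight $(1+|t|)^s$ gives
\[
\mathbf{Var}\{\mathcal{N}_n[\varphi]\}\le \frac{\|\varphi\|_s^2}{4\pi^2}\int(1+|t|)^{-2s}V_n(t)\,dt.
\]
Hence it is enough to prove $V_n(t)\le C n^{k-1}(1+|t|)^{p}$ for some $p<2s-1$; for $s>5/2$ any $p\le 4$ will do.

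\emph{Step 2 (Martingale over columns).} Let $\mathcal{F}_\alpha=\sigma(Y_1,\dots,Y_\alpha)$ and $\mathcal{M}_n^{(\alpha)}=\mathcal{M}_n-\tau_\alpha Y_\alpha Y_\alpha^T$, with $U_n^{(\alpha)}(t)=e^{it\mathcal{M}_n^{(\alpha)}}$. Since $\Tr U_n^{(\alpha)}(t)$ is $\mathcal{F}_{\alpha-1}$-measurable conditional on the remaining indices, the Doob decomposition gives $V_n(t)\le\sum_\alpha \mathbf{E}|d_\alpha(t)|^2$ with $d_\alpha(t)=(\mathbf{E}_\alpha-\mathbf{E}_{\alpha-1})[\Tr U_n(t)-\Tr U_n^{(\alpha)}(t)]$. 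By the Duhamel formula,
\[
\Tr U_n(t)-\Tr U_n^{(\alpha)}(t)=it\tau_\alpha\int_0^1\bigl(Y_\alpha,\,U_n\bigl((1{-}s)t\bigr)U_n^{(\alpha)}(st)Y_\alpha\bigr)\,ds.
\]
Iterating Duhamel once more to replace $U_n$ by $U_n^{(\alpha)}$ on the right-hand side and using the semigroup identity $U_n^{(\alpha)}((1-s)t)U_n^{(\alpha)}(st)=U_n^{(\alpha)}(t)$ yields the decomposition
\[
\Tr U_n(t)-\Tr U_n^{(\alpha)}(t)=it\tau_\alpha\bigl(Y_\alpha,U_n^{(\alpha)}(t)Y_\alpha\bigr)+R_\alpha(t),
\]
where the principal term is a quadratic form in $Y_\alpha$ whose kernel is independent of $Y_\alpha$ and has operator norm $1$, and $R_\alpha(t)$ is a quadratic-in-$Y_\alpha Y_\alpha^T$ remainder controlled by $\tau_\alpha^2 t^2\|Y_\alpha\|^4$.

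\emph{Step 3 (Tensor good-vector property).} A short induction on $k$ shows that if each factor $\mathbf{y}^{(j)}$ satisfies the one-leg moment assumptions of the lemma, then for any deterministic $n^k\times n^k$ matrix $H$,
\[
\mathbf{Var}\{(HY,Y)\}\le C\|H\|^2/n,\qquad Y=\mathbf{y}^{(1)}\otimes\cdots\otimes\mathbf{y}^{(k)}.
\]
The base case $k=1$ follows from unrolling the fourth mixed moments with the pairing structure permitted by the unconditional distribution and inserting $a_{2,2}=n^{-2}+O(n^{-3})$ and $\kappa_4=O(n^{-2})$; the inductive step conditions on one leg and uses $\|\widetilde H\|\le\|H\|\,\|Z\|^2$ together with $\mathbf{E}\|Z\|^4=1+O(n^{-1})$. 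Applied to the principal term above, this gives $\mathbf{E}|d_\alpha(t)|^2\le Ct^2\tau_\alpha^2/n$ from that term alone. Summing over $\alpha$ and invoking $\sum_\alpha\tau_\alpha^2\le Cm=O(n^k)$ from (\ref{m4}) yields the desired $Ct^2 n^{k-1}$ contribution.

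\emph{Main obstacle.} The hard part is the remainder $R_\alpha(t)$: the crude bound $|R_\alpha|\le \tau_\alpha^2 t^2\|Y_\alpha\|^4$ summed over $\alpha$ produces $O(t^4 n^k)$, which loses the essential factor of $n$. One must therefore estimate $\mathbf{E}|(\mathbf{E}_\alpha-\mathbf{E}_{\alpha-1})R_\alpha|^2$ rather than $\mathbf{E}|R_\alpha|^2$, exploiting the mean-zero structure of the martingale increment together with the variance (not just the size) of the quadratic forms $\|Y_\alpha\|^2-1$ and of $(Y_\alpha,U_n^{(\alpha)}(\cdot)Y_\alpha)-n^{-k}\Tr U_n^{(\alpha)}(\cdot)$, again via Step~3. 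This produces the extra polynomial factor in $t$, ultimately giving $V_n(t)\le Cn^{k-1}(1+|t|)^{4}$ and forcing the hypothesis $s>5/2$ upon integration against $(1+|t|)^{-2s}$.
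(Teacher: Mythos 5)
Your overall plan---a column-wise martingale decomposition combined with a variance bound for bilinear forms $(HY,Y)$ that lifts the ``good'' property to the tensor vector---is the right one, and your Step~3 is essentially the paper's Lemma~\ref{l:Ayyl} and Lemma~\ref{l:form}. The genuine divergence is the choice of observable: you work with the unitary group $U_n(t)=e^{it\M}$ via the Duhamel expansion, whereas the paper works with the resolvent $\gamma_n(z)=\Tr(\M-zI)^{-1}$ through the Shcherbina inequality
\begin{equation*}
\mathbf{Var}\{\mathcal{N}_n[\varphi]\}\le C_s\|\varphi\|_s^2\int_0^\infty d\eta\,e^{-\eta}\eta^{2s-1}\int\mathbf{Var}\{\gamma_n(\mu+i\eta)\}\,d\mu.
\end{equation*}
This choice is not cosmetic. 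The rank-one perturbation formula gives $\gamma_n-\gamma_n^\alpha=-B_\alpha/A_\alpha$ as a \emph{ratio} of two bilinear forms in $Y_\alpha$, uniformly bounded by $1/|\Im z|$. Because $A_\alpha^{-1}$ enters multiplicatively, the algebraic identity $B_\alpha/A_\alpha-\mathbf{E}_\alpha\{B_\alpha\}/\mathbf{E}_\alpha\{A_\alpha\}=(B_\alpha)^\circ_\alpha/\mathbf{E}_\alpha\{A_\alpha\}-(B_\alpha/A_\alpha)\,(A_\alpha)^\circ_\alpha/\mathbf{E}_\alpha\{A_\alpha\}$ reduces the martingale increment to the centered $A_\alpha$ and $B_\alpha$ alone, both of which have $Y_\alpha$-independent kernels, so Lemma~\ref{l:form} applies and only second moments of $(HY,Y)^\circ$ are needed.

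Your Duhamel expansion does not factorize the same way, and the ``Main obstacle'' paragraph, which is the crux, does not actually close the gap. Writing the remainder schematically as $R_\alpha\sim t^2\tau_\alpha^2\iint P_1P_2\,ds\,du$, where $P_1=(U_n^{(\alpha)}(\cdot)Y_\alpha,Y_\alpha)$ has a $Y_\alpha$-independent kernel but $P_2$ still contains the full $U_n$, the centered product is $(P_1P_2)^\circ_\alpha=\bar P_1P_2^\circ+P_1^\circ\bar P_2+(P_1^\circ P_2^\circ)^\circ_\alpha$. The middle term is fine. But $\bar P_1P_2^\circ$ needs $\mathbf{E}_\alpha\{|P_2^\circ|^2\}=O(1/n)$, which is not a consequence of Lemma~\ref{l:form} because the kernel of $P_2$ depends on $Y_\alpha$; peeling off another Duhamel layer reproduces the same difficulty one level deeper. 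And $(P_1^\circ P_2^\circ)^\circ_\alpha$ needs fourth centered moments of the bilinear forms, which Lemma~\ref{l:apriory} does not assume---compare the paper's (\ref{g4<}), which is proved under the strictly stronger hypothesis (\ref{m4Ayy}). So the remainder estimate is a real gap: the hypotheses of the lemma give you only second-order control of $(HY,Y)^\circ$, and that is not enough to run the Duhamel argument as sketched. The resolvent route, via the ratio $B_\alpha/A_\alpha$ and the a priori bound $|B_\alpha/A_\alpha|\le1/|\Im z|$, sidesteps this entirely.
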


 It follows from Lemma \ref{l:apriory} that in order to prove the CLT (if any) for  linear eigenvalue statistics of $\M$, one needs to
normalize them by $n^{-(k-1)/2}$.
 To formulate our main result we need more definitions.
\begin{definition}
\label{d:inv} We say that the distribution of a random vector $\mathbf{y}\in\mathbb{R}^n$
is \textit{permutationally invariant} (or \textit{exchangeable}) if it is invariant with respect to the permutations of entries of $\mathbf{y}$.
\end{definition}

\begin{definition}
\label{d:clt} We say that
 normalized isotropic vectors $\mathbf{y}\in\mathbb{R}^n$, $n\in \N$, are the \textit{CLT-vectors} if they have unconditional permutationally invariant
distributions and  satisfy the following conditions:

 (i) their fourth moments satisfy  (\ref{a22}) -- (\ref{k4}),

 (ii) their sixth moments satisfy conditions
\begin{align}
& a_{2,2,2}:=\mathbf{E}\{y_{ i}^2y_{ j}^2y_{ k}^2\}=n^{-3}+O(n^{-4}),\label{m6}
\\
&a_{2,4}:=\mathbf{E}\{y_{ i}^2y_{ j}^4\}=O(n^{-3}),\quad
a_{6}:=\mathbf{E}\{y_{ i}^6\}=O(n^{-3}),\notag
\end{align}

(iii) for every $n\times n$  matrix $H_n$ which does not depend on $\mathbf{y}$,
\begin{equation}
\mathbf{E}\{|(H_n\mathbf{y},\mathbf{y})^{\circ}|^6\}\leq C||H_{n}||^{6}n^{-3}.  \label{Ayy6}
\end{equation}
\end{definition}

It can be shown that a vector of the form
$\mathbf{y}=\mathbf{x}/n^{1/2}$, where $\mathbf{x}$ has i.i.d.
components with even distribution and bounded twelfth moment is a CLT-vector as well as a vector
uniformly distributed on the unit ball in $\mathbb{R}^n$ or a properly normalized vector uniformly distributed on the unit ball
$B_p^n=\big\{\mathbf{x}\in\mathbb{R}^n:\; \sum_{j=1}^n|x_j|^p\leq 1\big\}$ in $l_p^n$ (for $k=1$, see \cite{GLPP:13} Section 2).

 The main result of the present paper is:

\begin{theorem}
\label{t:main} Let  $m$ and $n$ be positive integers satisfying (%
\ref{mn}) with $k=2$,  and let $\{\tau _{\alpha }\}_{\alpha =1}^{m}$
 be a set of real numbers uniformly bounded in $\al$ and $m$ and satisfying (\ref{sigma}).
 Consider  matrices $\mathcal{M}_{n,m,2}(\mathbf{y})$ (\ref{M}) corresponding to CLT-vectors $\y\in\R^n$.
If $\mathcal{N}_{n}[\varphi ]$ are the linear statistics of their
eigenvalues (\ref{Nn})  corresponding to a test function $\varphi \in \HH_{s}$,
$s >5/2$, then $\{n^{-1/2}\mathcal{N}_{n}^{\circ }[\varphi ]\}_n$ converges in distribution to a Gaussian random variable with zero
mean and the variance $V[\varphi ]=\lim_{\eta\downarrow 0}V_\eta[\varphi ]$, where
\begin{align}
V_\eta[\varphi]=\frac{2(a+b+2)c}{\pi^2}\int \tau^2\bigg(%
\Im \int \frac{f^{\prime }(\lambda+i\eta)}{(1+\tau f(\lambda+i\eta))^{2}}\varphi
(\lambda )d\lambda \bigg )^{2}d\sigma (\tau ),
\label{Var}
\end{align}
and $f$ given by (\ref{MPE}).
\end{theorem}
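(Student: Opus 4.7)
The plan is the standard one for CLTs of linear eigenvalue statistics: first prove the CLT for traces of the resolvent $g_n(z)=\Tr G(z)$, $G(z)=(\M-z)^{-1}$, at a finite collection of non-real points, and then transfer the result to an arbitrary $\varphi\in\HH_s$ by combining the a priori bound (\ref{apriory}) of Lemma \ref{l:apriory} with the Fourier representation of $\varphi$. Introduce
\[
Z_n(z,x)=\E\{\exp(ixn^{-1/2}g_n^\circ(z))\}
\]
and aim to show that $\partial_x Z_n(z,x)=-x\,C(z,z)\,Z_n(z,x)+o(1)$ for some limiting covariance $C$. Pointwise convergence to the Gaussian characteristic function follows; joint convergence at several points is obtained by applying the same argument to a linear combination of $g_n^\circ(z_j)$ and invoking the Cram\'er--Wold device.

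To derive the differential equation I start from the identity
\[
zg_n(z)+n^2=\sum_{\alpha=1}^{m}\tau_\alpha(G(z)\Y_\alpha,\Y_\alpha),
\]
isolate the dependence of $G$ on each $\Y_\alpha=\y_\alpha^{(1)}\otimes\y_\alpha^{(2)}$ via the Sherman--Morrison formula applied to $\M-\tau_\alpha\Y_\alpha\Y_\alpha^{T}$, and exploit the Kronecker identity
\[
\Y_\alpha\Y_\alpha^{T}=(\y_\alpha^{(1)}\y_\alpha^{(1)T})\otimes(\y_\alpha^{(2)}\y_\alpha^{(2)T}).
\]
This reduces bilinear forms in $\Y_\alpha$ to iterated bilinear forms in $\y_\alpha^{(1)}$ and $\y_\alpha^{(2)}$ of the $n\times n$ partial trace of $G$ over the complementary tensor factor. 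To each tensor factor I then apply, conditionally, the bilinear-form variance identity valid for unconditional permutationally invariant vectors satisfying (\ref{a22})--(\ref{k4}); its leading contribution decomposes into a universal $\Tr(HK)$ piece, a diagonal piece proportional to $b$, and a trace-trace piece proportional to $a$, with sixth-moment remainders controlled by (\ref{m4Ayy}) and (\ref{Ayy6}).

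The heart of the argument is identifying the order $n$ contribution to $\mathbf{Cov}(g_n(z_1),g_n(z_2))$. Iterating the variance identity over both tensor factors, the universal $\Tr(HK)$ piece applied in both factors produces a covariance of order $1$, which is negligible after the $n^{-1/2}$ normalization and explains the absence of an analogue of the double-integral $L(z_1,z_2)$ term of Theorem \ref{t:clt}. The cross-combinations of the universal piece in one factor with the diagonal/trace-trace piece in the other generate the additive constant $2$ in $a+b+2$, while the all-diagonal and all-trace-trace iterations contribute the $a$ and $b$. Summing over the $m\sim cn^2$ values of $\alpha$ and using the Marchenko--Pastur equation (\ref{MPE}) to replace the conditional expectations of $\tau_\alpha(GY_\alpha,Y_\alpha)$ by their deterministic equivalents $\tau_\alpha f(z)/(1+\tau_\alpha f(z))$, one obtains an expression proportional to $(a+b+2)c$ times
\[
\int\tau^2\frac{f'(z_1)f'(z_2)}{(1+\tau f(z_1))^2(1+\tau f(z_2))^2}\,d\sigma(\tau),
\]
which after setting $z_1=\lambda+i\eta$, $z_2=\overline{z_1}$ and integrating against $\varphi$ reproduces (\ref{Var}).

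The main obstacle will be showing that the many remainder terms, trivially of lower order in the fully independent-entries case of Theorem \ref{t:clt}, really contribute only $o(n)$ to the covariance in the present tensor setting. Because $\Y_\alpha$ carries only $2n$ independent parameters rather than $n^2$, several contractions that cancel in the $k=1$ sample-covariance calculation survive here; each must be expressed in terms of partial traces of products of resolvents and shown to close to an $o(n)$ quantity via (\ref{MPE}) together with careful application of (\ref{Ayy6}). Once the Gaussian law for resolvent traces is established, the transfer to general $\varphi\in\HH_s$ with $s>5/2$ is a standard Fourier/Helffer--Sj\"{o}strand argument: the bound (\ref{apriory}) absorbs the $|t|^s$ growth of $\widehat{\varphi}(t)$ and permits taking $\eta\downarrow 0$.
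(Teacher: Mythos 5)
Your strategy mirrors the paper's: compute the leading-order covariance of resolvent traces by combining the rank-one resolvent identity (\ref{G-G})--(\ref{AB}) with the variance identity for bilinear forms in tensor-product vectors (Lemma~\ref{l:form}), then turn that covariance into a CLT by differentiating a characteristic function and closing an ODE (as in~(\ref{dZ=})--(\ref{dZ})), and finally pass from Poisson-smoothed to general $\varphi\in\HH_s$ using the a~priori bound (\ref{apriory}). The organizational difference---proving the CLT for $\gamma_n(z)$ at a finite family of $z$'s and invoking Cram\'er--Wold rather than working directly with $\mathcal{N}_n[\varphi_\eta]=\pi^{-1}\int\varphi(\mu)\Im\gamma_n(\mu+i\eta)\,d\mu$---is cosmetic, since the same ODE argument and the same covariance formula (\ref{limCov}) underlie both.

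However, your sketch names but does not resolve what is actually the hardest step. Writing the covariance as a sum over $\alpha$ and iterating $1/A_\alpha$ produces remainders that are expectations of products of several \emph{centered} factors $A_\alpha^\circ$, $B_\alpha^\circ$; what one really needs is not merely that $\E_\alpha\{(A_\alpha^\circ)^p\}=O(n^{-p/2})$ (that is Lemma~\ref{l:aux}), but that these conditional moments \emph{self-average}: $\V\{\E_\alpha\{(A_\alpha^\circ)^p\}\}=O(n^{-4})$ for $p=2,3$. This is Lemma~\ref{l:crocodile}, whose proof for $p=3$ requires the full combinatorial classification of index schemes for $\E_\alpha\{(HY_\alpha,Y_\alpha)^3\}$ into partial traces of products of resolvent blocks, and uses the sixth-moment conditions (\ref{m6})--(\ref{Ayy6}) together with Lemma~\ref{l:f1f2}. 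Your ``expressed in terms of partial traces and shown to close to an $o(n)$ quantity via~(\ref{MPE})'' is the right intuition but it glosses over precisely this work; you should also state and prove the self-averaging of $g_n^{(i)},\widetilde g_n^{(i)}$ (Lemma~\ref{l:f1f2}), without which (\ref{limAA}) does not follow. Finally, a small factual correction: the ``$+2$'' in $a+b+2$ does not come from cross-combinations of the universal piece in one factor with the diagonal/trace piece in the other. It comes entirely from the universal ($a_{2,2}$) pairings acting on a single tensor factor at a time: each factor $i\in\{1,2\}$ contributes $2\big(g_n^{(1)}+\widetilde g_n^{(1)}\big)\to 4f(z_1)f(z_2)$ and $b\big(g_n^{(2)}+\widetilde g_n^{(2)}\big)\to 2b\,f(z_1)f(z_2)$ in (\ref{form22}), and together with the $W=\emptyset$ contribution $2a\,f(z_1)f(z_2)$ this yields $2(a+b+2)f(z_1)f(z_2)$.
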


\begin{remark} {\it (i)} In particular, if $\tau _1=\cdots= \tau _m= 1$,
then
\begin{align*}
V[\varphi ]=\frac{(a+b+2)}{2c\pi ^{2}}\left( \int_{a_{-}}^{a_{+}}\varphi (\mu )\frac{\mu
-a_{m}}{\sqrt{(a_{+}-\mu )(\mu -a_{-})}}d\mu \right) ^{2},
\end{align*}%
where  $a_{\pm }=(1\pm \sqrt{c})^{2}$ and $a_{m}=1+c$.

{\it (ii)}
We can replace the condition of the uniform boundedness of $\tau_\al$ with the condition of uniform boundedness
 of eighth moments of the normalized counting measures $\sigma_n$, or take $\{\tau_\al\}_\al$ being real random variables independent of $\mathbf{y}$ with common probability law $\sigma$ having finite eighth moment. In general, it is clear from (\ref{Var}) that it should be enough to have second moments of $\sigma_n$ being uniformly bounded in $n$.

  {\it (iii)} If in (\ref{Var}) $a+b+2=0$, then to prove the CLT one needs to renormalize  linear eigenvalue statistics. In particular, it can be shown that if $\y$ in the definition of $\MM(\y)$ is uniformly distributed on the unit sphere in $\R^n$, then $a+b+2=0$ and under additional assumption $m/n=c+O(n^{-1})$ the variance of the linear eigenvalue statistic corresponding to a smooth enough test function is of the order $O(n^{k-2})$ (cf (\ref{apriory})).
\end{remark}

The paper is organized as follows. Section \ref{s:aux} contains some  known facts and auxiliary results.
In Section \ref{s:NCM}, we prove Theorem \ref{t:NCM} on the convergence of the NCMs of eigenvalues of $\MM$. Sections \ref{s:variance} and \ref{s:crocodile} present some asymptotic properties of bilinear forms $(HY,Y)$, where $Y$ is given by (\ref{Y}) and $H$ does not depend on $Y$. In Section \ref{s:apriory}, we prove Lemma \ref{l:apriory}. In Section \ref{s:covariance}, the limit expression for the covariance of the resolvent traces is found. Section \ref{s:proof} contains the proof of the main result, Theorem \ref{t:main}.

\section{Notations}
\label{s:notations}

Let $I$ be the $n^k\times n^k$ identity matrix. For $z\in \C$, $\Im z\neq 0$, let $G(z)=(\M-zI)^{-1}$ be the resolvent of $\M$, and
\begin{align*}
&\gamma_n(z)=\Tr G(z)=\sum_\jj G_{\jj\,\jj}(z),
\\
&g_n(z)=n^{-k}\gamma_n(z),\quad f_n(z)=\E\{g_n(z)\}.
 \end{align*}
Here and in what follows
$$
\sum_{\jj}=\sum_{j_1,...,j_k},\quad\sum_{j}=\sum_{j=1}^n,\quad \text{and} \quad \sum_{\al}=\sum_{\al=1}^m,
$$
so that for the non-bold Latin and Greek indices the summations are from $1$ to $n$ and from $1$ to $m$, respectively.
 For $\al\in[m]$, let
 \begin{align}
&\M^\alpha=\M\big|_{\tau_\alpha=0}=\M-\tau_{\alpha}\Y_{\alpha}\Y_\al^T,\quad G^\al(z)=(\M^\al-zI)^{-1},\label{Ma}
\\
&\gamma_n^\alpha =\Tr\, G^\alpha,\quad g_n^\al=n^{-k}\gamma_n^\alpha, \quad f_n^\al=\E\{g_n^\al\}.\notag
 \end{align}
 Thus the upper index $\al$ indicates that the corresponding function does not depend on $\Y_\al$.  We use the notations $\E_{\al}\{...\}$ and $(...)^\circ_\al $ for the averaging and the centering with respect to $\Y_\al$, so that $(\xi)^\circ_\al=\xi-\E_{\al}\{\xi\}$.

  In what follows we also need functions (see (\ref{AB}) below)
  \begin{align*}
&A_{\alpha }=A_{\alpha }(z) :=1+\tau_\alpha (G^\alpha Y_{\alpha },Y_{\alpha })
\quad\text{and}\quad B_{\alpha }=B_{\alpha }(z):=\tau_\alpha ((G^{\alpha})^2 Y_{\alpha },Y_{\alpha }).
\end{align*}

Writing $O(n^{-p})$ or $o(n^{-p})$ we suppose that $n\rightarrow\infty$ and that the coefficients in the corresponding relations are uniformly bounded in $\{\tau_\alpha\}_\alpha$, $n\in\N$, and $z\in K$.
We use the notation $K$ for any compact set in $\C\setminus \R$.

Given matrix $H$,  $||H||$ and $||H||_{HS}$ are the Euclidean operator norm and the Hilbert-Schmidt norm, respectively. We use $C$ for any absolute constant which can vary from place to place.

\section{Some facts and auxiliary results}
\label{s:aux}

We need the following bound for the martingales moments, obtained in \cite{Dh-Co:68}:
\begin{proposition}
\label{p:mart}
Let $\{S_m\}_{m\ge 1}$ be a martingale, i.e. $\forall m$,  $\E\{S_{m+1}\,\vert\,S_1,...,S_m\}=S_m$ and $\E\{|S_m|\}<\infty$. Let $S_0=0$. Then for every $\nu\ge 2$, there exists an absolute constant $C_\nu$ such that for all $m=1,2...$
\begin{align}
\label{mart}
\E\{|S_m|^\nu\}\le C_\nu m^{\nu/2-1}\sum_{j=1}^m \E\{|S_j-S_{j-1}|^\nu\}.
\end{align}
\end{proposition}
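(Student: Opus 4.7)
The plan is to reduce (\ref{mart}) to the standard Burkholder square-function inequality and then finish with a one-line convexity estimate. Let $X_j := S_j - S_{j-1}$ for $j \geq 1$ (using $S_0 = 0$), so that $S_m = \sum_{j=1}^{m} X_j$ and $\E\{X_j \mid S_1,\ldots,S_{j-1}\} = 0$. Burkholder's martingale inequality asserts that for every $\nu \geq 2$ there is an absolute constant $B_\nu$ with
\begin{equation*}
\E\{|S_m|^{\nu}\} \leq B_\nu\,\E\Bigl\{\Bigl(\sum_{j=1}^{m} X_j^{2}\Bigr)^{\nu/2}\Bigr\},
\end{equation*}
which I would quote as a black box. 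For the case $\nu = 2$ no deep input is required: orthogonality of martingale differences already gives $\E\{|S_m|^{2}\} = \sum_{j=1}^{m}\E\{|X_j|^{2}\}$, matching (\ref{mart}) with $m^{0}=1$.

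Since $\nu \geq 2$, the function $t \mapsto t^{\nu/2}$ is convex on $[0,\infty)$, so Jensen's inequality applied to the uniform probability on $\{1,\ldots,m\}$ gives the deterministic bound
\begin{equation*}
\Bigl(\sum_{j=1}^{m} X_j^{2}\Bigr)^{\nu/2} = m^{\nu/2}\Bigl(\frac{1}{m}\sum_{j=1}^{m} X_j^{2}\Bigr)^{\nu/2} \leq m^{\nu/2-1}\sum_{j=1}^{m}|X_j|^{\nu}.
\end{equation*}
Taking expectations, inserting this into the Burkholder estimate, and using linearity of expectation yields (\ref{mart}) with $C_\nu := B_\nu$.

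The only nontrivial ingredient is Burkholder's inequality itself, which is not elementary but entirely classical; everything else is a short convexity computation. The content of (\ref{mart}) is that it trades the sharp $\ell^{2}$-type martingale estimate for an $\ell^{\nu}$-type bound at the modest cost of a factor $m^{\nu/2-1}$. This is precisely the form that will be needed when the proposition is applied to resolvent-trace martingales arising from the filtration generated by $\Y_1,\ldots,\Y_m$ in the proofs of Lemma~\ref{l:apriory} and Theorem~\ref{t:main}, where $m$ is of order $n^{k}$.
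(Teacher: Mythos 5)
Your proof is correct, but note that the paper does not prove this statement at all: Proposition \ref{p:mart} is quoted as a known result from Dharmadhikari, Fabian and Jogdeo \cite{Dh-Co:68}, so there is no in-paper argument to compare against.

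Your route --- Burkholder's inequality $\E\{|S_m|^\nu\}\le B_\nu\,\E\{(\sum_j X_j^2)^{\nu/2}\}$ for $\nu\ge 2$ followed by the power-mean (Jensen) estimate $(\sum_{j=1}^m X_j^2)^{\nu/2}\le m^{\nu/2-1}\sum_{j=1}^m|X_j|^\nu$ --- is a clean modern derivation and the convexity step is carried out correctly ($t\mapsto t^{\nu/2}$ is convex precisely because $\nu\ge2$). This is genuinely different from the cited source: the 1968 Dharmadhikari--Fabian--Jogdeo proof is a direct, more elementary argument (an induction built on conditional expectations and H\"older-type estimates) that does not invoke Burkholder's square-function inequality, which was only just appearing at the time. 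What your approach buys is brevity and conceptual transparency at the cost of importing a deep theorem as a black box; the original proof is self-contained but longer. Both yield the same nonoptimal factor $m^{\nu/2-1}$, which, as you correctly observe, is all that is used downstream in Lemma \ref{l:mart2}, Lemma \ref{l:apriory} and the variance estimates of Section \ref{s:apriory}. Your separate treatment of $\nu=2$ via orthogonality of martingale differences is a nice sanity check, though it is also subsumed by the general argument.
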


\begin{lemma}
\label{l:mart2}
Let $\{\xi_\al\}_\al$  be independent random variables assuming values in $\R^{n_\al}$ and having probability laws
$P_\al$, $\al\in[m]$, and let $\Phi: \R^{n_1}\times...\times \R^{n_m}\rightarrow\C$ be a Borel measurable function. Then for every $\nu\ge 2$, there exists an absolute constant $C_\nu$ such that for all $m=1,2...$
\begin{align}
\label{mart2}
\E\{|\Phi-\E\{\Phi\}|^\nu\}\le C_\nu m^{\nu/2-1}\sum_{\al=1}^m \E\{|(\Phi)^\circ_\al|^\nu\},
\end{align}
where $(\Phi)^\circ_\al=\Phi-\E_\al\{\Phi\}$, and $\E_\al$ is the averaging with respect to $\xi_\al$.
\end{lemma}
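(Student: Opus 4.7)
The plan is to reduce Lemma \ref{l:mart2} to the martingale moment inequality in Proposition \ref{p:mart} by constructing the standard Doob martingale adapted to the filtration generated by the independent blocks $\xi_1,\dots,\xi_m$. Concretely, I would set $\F_0=\{\emptyset,\Omega\}$, $\F_\al=\sigma(\xi_1,\dots,\xi_\al)$ for $\al\in[m]$, and define $S_\al=\E\{\Phi\,\vert\,\F_\al\}-\E\{\Phi\}$. This is a martingale with $S_0=0$ and $S_m=\Phi-\E\{\Phi\}$ (using that $\Phi$ is integrable, which we may freely assume by a standard truncation/finiteness argument on the right-hand side of \eqref{mart2}). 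Proposition \ref{p:mart} then yields
\begin{equation*}
\E\{|\Phi-\E\{\Phi\}|^\nu\}\le C_\nu m^{\nu/2-1}\sum_{\al=1}^m\E\{|S_\al-S_{\al-1}|^\nu\},
\end{equation*}
so it remains only to control the increments by the partial centerings $(\Phi)^\circ_\al$.

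The key observation is that, by independence of the $\xi_\beta$, the quantity $\E_\al\{\Phi\}$, which is a function of $\xi_1,\dots,\xi_{\al-1},\xi_{\al+1},\dots,\xi_m$ alone, satisfies $\E\{\E_\al\{\Phi\}\,\vert\,\F_\al\}=\E\{\E_\al\{\Phi\}\,\vert\,\F_{\al-1}\}=\E\{\Phi\,\vert\,\F_{\al-1}\}$, since conditioning on $\F_\al$ or $\F_{\al-1}$ amounts to integrating out $\xi_{\al+1},\dots,\xi_m$ in both cases (the first equality), and integrating $\E_\al\{\Phi\}$ against the distributions of $\xi_\al,\xi_{\al+1},\dots,\xi_m$ reproduces $\E\{\Phi\,\vert\,\F_{\al-1}\}$ (the second equality). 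Consequently
\begin{equation*}
S_\al-S_{\al-1}=\E\{\Phi-\E_\al\{\Phi\}\,\vert\,\F_\al\}=\E\{(\Phi)^\circ_\al\,\vert\,\F_\al\}.
\end{equation*}

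Applying the conditional Jensen inequality to the convex function $x\mapsto|x|^\nu$ gives $|S_\al-S_{\al-1}|^\nu\le\E\{|(\Phi)^\circ_\al|^\nu\,\vert\,\F_\al\}$, and taking unconditional expectation yields $\E\{|S_\al-S_{\al-1}|^\nu\}\le\E\{|(\Phi)^\circ_\al|^\nu\}$. Substituting this into the martingale bound above produces exactly \eqref{mart2}.

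There is essentially no obstacle here beyond bookkeeping: the whole argument is the classical Burkholder-type application of Proposition \ref{p:mart} to the Doob martingale, the only substantive step being the identification $S_\al-S_{\al-1}=\E\{(\Phi)^\circ_\al\,\vert\,\F_\al\}$, which relies crucially on independence of the blocks $\xi_\al$. If $\Phi$ were not a priori $\nu$-integrable, the right-hand side of \eqref{mart2} is trivially $+\infty$ and the inequality holds, so the integrability issue is harmless.
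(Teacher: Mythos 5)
Your proof is correct and matches the paper's argument essentially step for step: both apply Proposition~\ref{p:mart} to the Doob martingale $S_\al=\E\{\Phi\mid\F_\al\}-\E\{\Phi\}$ (the paper writes the same object as $\E_{\ge\al+1}\{\Phi\}-\E\{\Phi\}$ via iterated partial averages, which coincides with the conditional expectation by independence), identify the increment as $\E\{(\Phi)^\circ_\al\mid\F_\al\}$, and bound it by conditional Jensen (the paper calls the same inequality ``H\"older'').
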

\begin{proof}
This simple statement is hidden in the proof of Proposition 1 in [25]. We give its proof for the sake of completeness. For $\al\in[m]$, denote  $\E_{\ge \al}=\E_{\al}...\E_m$.  Applying Proposition \ref{mart} with $S_0=0$, $S_\al=\E_{\ge \al+1}\{\Phi\}-\E \{\Phi\}$, $S_m=\Phi-\E \{\Phi\}$, we get
\begin{align*}
\E\{|\Phi-\E \{\Phi\}|^\nu\}\le C_\nu m^{\nu/2-1}\sum_{\al=1}^m \E\{|\E_{\ge \al+1}\{\Phi\}-\E_{\ge \al}\{ \Phi\}|^\nu\}.
\end{align*}
By the H$\ddot{\text{o}}$lder inequality
$$
|\E_{\ge \al+1}\{\Phi\}-\E_{\ge \al}\{ \Phi\}|^\nu=|\E_{\ge \al+1}\{(\Phi)^\circ_{\al}\}|^\nu\le \E_{\ge \al+1}\{|(\Phi)^\circ_{\al}|^\nu\},
$$
which implies (\ref{mart2}).
\end{proof}

\begin{lemma}
\label{l:Ayyl}
 Fix $\ell\ge 2$ and $k\ge 2$. Let $\mathbf{y}\in \mathbb{R}^n$
be a normalized isotropic random vector  (\ref{iso}) such that
 for every $n\times n$ complex matrix $H$ which does
not depend on $\mathbf{y}$, we have
\begin{equation}
\E\{|(H\mathbf{y},\mathbf{y})^\circ|^\ell\}\leq ||H||^\ell\delta_{n},\quad
\delta_{n}=o(1),\;n\rightarrow\infty.  \label{Hyyl}
\end{equation}
Then there exists an absolute constant $C_\ell$ such that for every $n^k\times n^k$ complex matrix $\mathcal{H}$ which does
not depend on $\mathbf{y}$, we have
\begin{equation}
\E\{|(\mathcal{H}\Y,\Y)^\circ|^\ell\}\leq C_\ell k^{\ell/2}||\mathcal{H}||^\ell\delta_{n},  \label{HYYl}
\end{equation}
where $\Y=\mathbf{y}^{(1)}\otimes...\otimes\mathbf{y}^{(k)}$, and $\mathbf{y}^{(j)}$, $j\in[k]$, are i.i.d. copies of
$\mathbf{y}$.
\end{lemma}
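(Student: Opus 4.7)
The plan is to telescope over the $k$ independent tensor factors $\y^{(1)},\dots,\y^{(k)}$, thereby reducing the $n^k$-dimensional bound (\ref{HYYl}) to the one-factor hypothesis (\ref{Hyyl}). First, I would apply Lemma \ref{l:mart2} with $m=k$, $\xi_j=\y^{(j)}$, and $\Phi=(\mathcal{H}\Y,\Y)$, producing
\begin{equation*}
\E\{|(\mathcal{H}\Y,\Y)^{\circ}|^\ell\}\le C_\ell\,k^{\ell/2-1}\sum_{j=1}^{k}\E\{|(\mathcal{H}\Y,\Y)^\circ_j|^\ell\},
\end{equation*}
where $(\,\cdot\,)^\circ_j$ denotes centering with respect to $\y^{(j)}$ only. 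Summing $k$ identically-distributed terms against the prefactor $k^{\ell/2-1}$ is what generates the target $k^{\ell/2}$.

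Next I would exploit the multilinearity of $\Y$ in its factors: for each fixed $j$, conditional on $\{\y^{(l)}\}_{l\ne j}$, the form is a quadratic in $\y^{(j)}$,
\begin{equation*}
(\mathcal{H}\Y,\Y)=(A^{(j)}\y^{(j)},\y^{(j)}),\qquad (A^{(j)}u,v)=(\mathcal{H}\,U_j(u),U_j(v)),
\end{equation*}
where $U_j(u):=\y^{(1)}\otimes\cdots\otimes u\otimes\cdots\otimes\y^{(k)}$ places $u\in\mathbb{R}^n$ in the $j$-th slot. Since $A^{(j)}$ is deterministic given the other factors and $\y^{(j)}$ is an independent copy of $\y$, the hypothesis (\ref{Hyyl}) applies in the conditional expectation over $\y^{(j)}$, yielding
\begin{equation*}
\E_j\{|(\mathcal{H}\Y,\Y)^{\circ}_j|^\ell\}\le \|A^{(j)}\|^\ell\,\delta_n.
\end{equation*}

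The key deterministic step is the partial-contraction bound $\|A^{(j)}\|\le \|\mathcal{H}\|\prod_{l\ne j}\|\y^{(l)}\|^2$, which follows at once from $|(\mathcal{H}\xi,\eta)|\le\|\mathcal{H}\|\,\|\xi\|\,\|\eta\|$ applied to $\xi=U_j(u)$, $\eta=U_j(v)$, together with $\|U_j(u)\|=\|u\|\prod_{l\ne j}\|\y^{(l)}\|$. Taking the outer expectation and using independence of the $\y^{(l)}$'s gives
\begin{equation*}
\E\{|(\mathcal{H}\Y,\Y)^{\circ}_j|^\ell\}\le \|\mathcal{H}\|^\ell\,\delta_n\prod_{l\ne j}\E\{\|\y^{(l)}\|^{2\ell}\}.
\end{equation*}
The remaining product is controlled by specialising (\ref{Hyyl}) to $H=I_n$: since $(I_n\y,\y)=\|\y\|^2$ and $\E\{\|\y\|^2\}=1$, we obtain $\E\{|\|\y\|^2-1|^\ell\}\le\delta_n$, and hence via $|a|^\ell\le 2^{\ell-1}(|a-1|^\ell+1)$ the moment $\E\{\|\y\|^{2\ell}\}\le 2^{\ell-1}(1+\delta_n)$ is bounded uniformly in $n$ for $n$ large. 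Since $k$ is fixed, the product $\prod_{l\ne j}\E\{\|\y^{(l)}\|^{2\ell}\}$ is an absolute constant, which may be absorbed into $C_\ell$; combining with the martingale inequality from Step~1 then gives (\ref{HYYl}).

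The main obstacle is the clean identification of $(\mathcal{H}\Y,\Y)$ as the quadratic form $(A^{(j)}\y^{(j)},\y^{(j)})$ together with the partial-contraction norm estimate $\|A^{(j)}\|\le\|\mathcal{H}\|\prod_{l\ne j}\|\y^{(l)}\|^2$; once this is in place, the telescoping via Lemma \ref{l:mart2} and the application of (\ref{Hyyl}) are routine. A secondary subtlety is that the $2\ell$-th moment of $\|\y\|^2$ is not automatic from the isotropy condition (\ref{iso}) alone, but is furnished for free by the choice $H=I_n$ in the hypothesis.
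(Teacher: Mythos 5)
Your argument reproduces the paper's proof: the same application of Lemma \ref{l:mart2} over the $k$ independent factors, the same identification $(\mathcal{H}\Y,\Y)=(H^{(j)}\y^{(j)},\y^{(j)})$ with the conditional $n\times n$ matrix $H^{(j)}$ (your $A^{(j)}$), the same operator-norm bound $\|H^{(j)}\|\le\|\mathcal{H}\|\prod_{i\ne j}\|\y^{(i)}\|^2$, and the same use of $H=I_n$ in (\ref{Hyyl}) to control $\E\{\|\y\|^{2\ell}\}$. The only difference is that you make the $2\ell$-th-moment step explicit, which the paper leaves as a one-line remark.
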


\begin{proof}
It follows  from (\ref{mart2}) that
\begin{align}
\label{bn0l}
\E\{|(\mathcal{H}\Y,\Y)^\circ|^\ell\}\le C_\ell k^{\ell/2-1}\sum_{j=1}^k \E\{|(\mathcal{H}\Y,\Y)^\circ_j|^\ell\},
\end{align}
where
$\xi^\circ_{j}=\xi-\E_{j}\{\xi\}$ and $\E_j$ is the averaging w.r.t. $y^{(j)}$.
We have
\begin{equation*}
(\mathcal{H}\Y,\Y)=\sum_{\pp,\,\q}\mathcal{H}_{\pp,\,\q}\Y_\pp\Y_\q=(H^{(j)}\mathbf{y}^{(j)},\mathbf{y}^{(j)}),
\end{equation*}
where $H^{(j)}$ is an $n\times n$ matrix with the entries
$$
(H^{(j)})_{st}=\sum_{\pp,\,\q}\mathcal{H}_{\pp,\,\q}\, \delta_{p_js}\delta_{q_jt} \,\,
y^{(1)}_{p_1}...y^{(j-1)}_{p_{j-1}}y^{(j+1)}_{p_{j+1}}...y^{(k)}_{p_k}\,\,
y^{(1)}_{q_1}...y^{(j-1)}_{q_{j-1}}y^{(j+1)}_{q_{j+1}}...y^{(k)}_{q_k}.
$$
This and (\ref{Hyyl}) yield
\begin{align*}
 \E_j\{|(\mathcal{H}\Y,\Y)^\circ_j|^\ell\}=\E_j\{|(H^{(j)}\mathbf{y}^{(j)},\mathbf{y}^{(j)})^\circ|^\ell\}\le  ||H^{(j)}||^\ell\delta_{n}.
\end{align*}
We have
$$
||H^{(j)}||\le ||\mathcal{H}||\prod _{i\neq j}||\mathbf{y}^{(i)}||^2.
$$
For $i\in[k]$, since by (\ref{iso}) $\E\{||y^{(i)}||\}=1$,  we have by (\ref{Hyyl}) $\E\{||y^{(i)}||^{2\ell}\}\le C$.
 Hence
\begin{align*}
 \E_j\{|(\mathcal{H}\Y,\Y)^\circ_j|^\ell\}
 \le ||\mathcal{H}||^{\ell}\prod _{i\neq j}\E\{||\mathbf{y}^{(i)}||^{2\ell}\}\delta_{n}\le C||\mathcal{H}||^\ell\delta_{n}.
\end{align*}
This and (\ref{bn0l}) lead to (\ref{HYYl}), which completes the proof of the lemma.
\end{proof}

\medskip

 The following statement was proved in \cite{Pa-Pa:09}.
\begin{proposition}
\label{p:sa}
Let $N_{n}$ be the NCM of the eigenvalues
of ${M}_n=\sum_{\alpha}\tau_\alpha {{\Y}_\alpha}{{\Y}_\alpha}^T$, where $\{\Y_{\alpha }\}_{\alpha =1}^{m}\in\R^{p}$ are i.i.d. random
vectors and $\{\tau _{\alpha }\}_{\alpha =1}^{m}$ are real numbers.  Then
\begin{align}
&\mathbf{Var}\{N_{n}(\Delta )\}\leq 4m/p^{2},\quad \forall \Delta \subset \mathbb{R}, \label{varN}
\\
&\mathbf{Var}\{g_{n}(z)\}\leq 4m/(p|\Im z|)^{2},\quad \forall z\in \mathbb{C}\setminus \mathbb{R}.\label{varg1}
\end{align}%
\end{proposition}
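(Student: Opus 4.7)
\medskip

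\noindent\textbf{Proof plan.} My strategy combines two standard ingredients: a rank-one perturbation estimate, and the martingale variance bound of Lemma~\ref{l:mart2} with $\nu=2$. I would first observe that $M_n-M_n^\alpha = \tau_\alpha Y_\alpha Y_\alpha^T$, with $M_n^\alpha := M_n|_{\tau_\alpha=0}$, is a rank-one Hermitian perturbation. Weyl's interlacing theorem then yields the deterministic, pathwise bound
$$|N_n(\Delta)-N_n^\alpha(\Delta)|\le 1/p\qquad\text{for every Borel }\Delta\subset\R,$$
while the standard trace estimate for rank-one resolvent differences (immediate from the resolvent identity and $\|(M_n-zI)^{-1}\|\le|\Im z|^{-1}$) gives
$$|g_n(z)-g_n^\alpha(z)|\le \frac{1}{p\,|\Im z|}.$$

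Next I would apply Lemma~\ref{l:mart2} with $\nu=2$ to the independent family $\xi_\alpha=Y_\alpha$ and to $\Phi\in\{N_n(\Delta),\,g_n(z)\}$. Because $\Phi^\alpha$ does not depend on $Y_\alpha$, it is $\E_\alpha$-measurable, and the variance-minimizing property of the conditional mean yields
$$\E_\alpha\{|(\Phi)^\circ_\alpha|^2\}\le \E_\alpha\{|\Phi-\Phi^\alpha|^2\}.$$
Combined with the deterministic bounds above, this gives $\E\{|(\Phi)^\circ_\alpha|^2\}\le 1/p^2$ and $\le 1/(p|\Im z|)^2$ in the two cases. Summing over $\alpha\in[m]$ via Lemma~\ref{l:mart2} delivers variance estimates of the form $Cm/p^2$ and $Cm/(p|\Im z|)^2$, matching the claimed inequalities up to the stated absolute constant.

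The argument is short once both ingredients are available, and I do not anticipate any substantive obstacle. The only delicate point is ensuring that the bounds $|\Phi-\Phi^\alpha|\le 1/p$ (and its resolvent analog) are \emph{pathwise}, not merely in expectation; this is precisely what Weyl interlacing and the rank-one resolvent estimate deliver, and is what makes the remainder of the martingale computation mechanical.
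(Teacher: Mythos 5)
The paper does not give its own proof of Proposition \ref{p:sa} but imports it from \cite{Pa-Pa:09}; your argument --- the pathwise rank-one bounds from Weyl interlacing and the resolvent identity, followed by the $\nu=2$ case of Lemma \ref{l:mart2} together with the conditional-mean minimality of $\E_\alpha\{\Phi\}$ --- is exactly the standard proof used there and is correct. Two small remarks: for $\nu=2$ the martingale increments are orthogonal, so $C_2=1$ and your computation actually yields $m/p^2$ and $m/(p|\Im z|)^2$, slightly sharper than the stated factor $4$; and the interlacing bound $|N_n(\Delta)-N_n^\alpha(\Delta)|\le 1/p$ is a statement about \emph{intervals} $\Delta$ (it fails for arbitrary Borel sets, where the two spectra can be separated entirely), so the quantifier ``$\forall\Delta\subset\R$'' in (\ref{varN}) must be read as ranging over intervals --- which is all the paper ever uses.
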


 Also, we will need the following simple claim:
\begin{claim}
\label{c:varprod}
If $h_1$, $h_2$ are bounded random variables, then
\begin{align}
  \label{varprod}
\V\{h_1h_2\}\le C\big(\V\{h_1\}+\V\{h_2\}\big).
\end{align}

\end{claim}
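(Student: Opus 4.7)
The plan is a short elementary calculation: decompose the centered product $(h_1 h_2)^\circ$ in a way that isolates $h_1^\circ$ and $h_2^\circ$, then use boundedness to trade factors of $h_i$ for the uniform bound $M$ (where $|h_i|\le M$ a.s., which is the meaning of ``bounded''; this $M$ will enter the constant $C$).

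The first step is the identity
\begin{equation*}
(h_1 h_2)^\circ = h_1^\circ\, h_2 + \E\{h_1\}\, h_2^\circ - \E\{h_1^\circ h_2^\circ\},
\end{equation*}
which one verifies by adding and subtracting $\E\{h_1\}h_2$ in $h_1 h_2 - \E\{h_1 h_2\}$ and noting that $\E\{h_1 h_2\}-\E\{h_1\}\E\{h_2\}=\E\{h_1^\circ h_2^\circ\}$. Squaring and using $|a+b+c|^2\le 3(|a|^2+|b|^2+|c|^2)$ gives
\begin{equation*}
|(h_1 h_2)^\circ|^2 \le 3 |h_1^\circ|^2 |h_2|^2 + 3 |\E\{h_1\}|^2 |h_2^\circ|^2 + 3 |\E\{h_1^\circ h_2^\circ\}|^2 .
\end{equation*}

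The second step is to take expectations and apply boundedness. The first term is bounded by $3 M^2 \V\{h_1\}$ because $|h_2|\le M$; the second by $3M^2 \V\{h_2\}$ because $|\E h_1|\le M$; for the third, Cauchy--Schwarz gives $|\E\{h_1^\circ h_2^\circ\}|^2\le \V\{h_1\}\V\{h_2\}\le \tfrac{M^2}{2}(\V\{h_1\}+\V\{h_2\})$, since $\V\{h_i\}\le M^2$. Summing yields
\begin{equation*}
\V\{h_1 h_2\} \le \tfrac{9}{2} M^2 \bigl(\V\{h_1\}+\V\{h_2\}\bigr),
\end{equation*}
so $C=\tfrac{9}{2}M^2$ works. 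There is no real obstacle here; the only ``choice'' is the particular decomposition of $(h_1 h_2)^\circ$, and the asymmetric form above keeps the algebra minimal while still producing a symmetric bound after invoking $\V h_i\le M^2$ on the Cauchy--Schwarz term.
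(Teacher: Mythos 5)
Your proof is correct. The paper states this as a ``simple claim'' with no proof at all, so there is nothing to compare against; your elementary decomposition $(h_1h_2)^\circ=h_1^\circ h_2+\E\{h_1\}h_2^\circ-\E\{h_1^\circ h_2^\circ\}$, the bound $|a+b+c|^2\le 3(|a|^2+|b|^2+|c|^2)$, and Cauchy--Schwarz on the cross term give a clean verification with the explicit constant $C=\tfrac{9}{2}M^2$ (where $|h_i|\le M$; note $C$ necessarily depends on the bound, which is what the paper intends since in its applications the $h_i$ are uniformly bounded on a fixed compact set $K$). One small point worth flagging: in the paper's applications the $h_i$ are complex-valued (resolvent-type quantities), and your argument still goes through verbatim since $\V\{\xi\}=\E\{|\xi^\circ|^2\}$ and the inequality $\V\{h_i\}=\E\{|h_i|^2\}-|\E\{h_i\}|^2\le M^2$ holds for complex $h_i$ as well.
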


\bigskip
\section{Proof of Theorem \ref{t:NCM}}
\label{s:NCM}

 Theorem \ref{t:NCM} essentially follows from Theorem 3.3 of \cite{Pa-Pa:09}
 and Lemma \ref{l:Ayyl}, here we give a proof for the sake of completeness. In view of (\ref{varN}) with $p=n^k$, it suffices to prove that the expectations
$
\overline{N}_{n}=\mathbf{E}\{N_{n}\}
$
of the NCMs of the eigenvalues of $\mathcal{M}_n$
converge weakly to $N$. Due to the one-to-one correspondence between non-negative measures and their Stieltjes transforms
(see e.g. \cite{Ak-Gl:93}), it is enough to show that the Stieltjes transforms of $\overline{N}_{n}$,
\begin{equation*}
f_n(z)=\int\frac{\overline{N}_{n}(d\lambda)}{\lambda-z},
\end{equation*}%
 converge to the solution $f$ of (\ref{MPE}) uniformly on every compact set $K\subset \C\setminus\R$, and that
 \begin{equation}
\lim_{\eta\rightarrow\infty} \eta|f(i\eta)|=1.\label{NR}
\end{equation}
 In \cite{Pa-Pa:09}, it is proved that the solution of (\ref{MPE}) satisfies (\ref{NR}), so it is enough to show that
 \begin{equation}
f_n(z){\underset{n\rightarrow\infty} \rightrightarrows} f(z),\quad z\in K,\label{fnf}
\end{equation}%
where we use the double arrow notation for the uniform convergence.
Assume first that all $\tau_\al$ are bounded:
\begin{equation}
\label{L}
\forall m\,\, \forall\al\in[m]\quad|\tau_\al|\le L.
\end{equation}
 Since $\M-\M^{\alpha}=\tau_{\alpha}\Y_{\alpha}\Y_\al^T$, the rank one perturbation formula
 \begin{equation}
 G-G^\alpha=-\frac{\tau_\alpha G^\alpha \Y_{\alpha}\Y_\al^T G^\alpha}{1+\tau_\alpha (G^\alpha Y_{\alpha },Y_{\alpha })}\label{G-G}
\end{equation}
implies  that
 \begin{equation}
 \gamma _{n}-\gamma_{n}^\alpha =-\frac{\tau_\alpha ((G^{\alpha})^2 Y_{\alpha },Y_{\alpha })}{1+\tau_\alpha (G^\alpha Y_{\alpha },Y_{\alpha })}=-\frac{B_{\al}}{A_{\al}}.\label{AB}
\end{equation}
It follows from the spectral theorem for the real symmetric matrices that there
exists a non-negative measure $m^{\alpha}$ such that
\begin{equation}
 (G^\alpha Y_{\alpha },Y_{\alpha })= \int\frac{m^\alpha(d\lambda)}{\lambda-z},\quad ((G^\alpha)^2 Y_{\alpha },Y_{\alpha })= \int\frac{m^\alpha(d\lambda)}{(\lambda-z)^2}.
\label{gya}
\end{equation}
This  yields
\begin{equation*}
|A_\alpha| \ge |\Im A_\alpha|
=|\tau_\alpha| |\Im z| \int\frac{ m^\alpha(d\lambda)}{|\lambda-z|^{2}},\quad |B_\alpha| \le
|\tau_\alpha| \int\frac{ m^\alpha(d\lambda)}{|\lambda-z|^{2}},
\end{equation*}
implying that
\begin{align}
|B_{\al}/A_{\al}|\le 1/|\Im z|.\label{B/A<}
\end{align}
It also follows from (\ref{G-G}) that
$
A^{-1}_{\alpha}=1-\tau_{\alpha}(G Y_{\alpha },Y_{\alpha}).
$
Hence,
\begin{equation}
|A^{-1}_{\alpha}|\le 1+|\tau_{\alpha}|\cdot|| Y_{\alpha }||^2/|\Im z|,
\label{A>}
\end{equation}
where we use $||G||\le|\Im z|^{-1}$. Let us show that
\begin{equation}
{|\mathbf{E}_\al\{A_{\al}\}|}^{-1},\,{|\mathbf{E}\{A_{\al}\}|}^{-1} \le  4(1+|\tau_\al|/|\Im z|).
\label{EA>}
\end{equation}
It follows from (\ref{iso}) that
\begin{align}
\E_\al\{A_\al\}=1+\tau_\al g_n^\al(z),\,\,\E\{A_\al\}=1+\tau_\al f_n^\al(z).\label{EA}
\end{align}
Consider $\E_\al\{A_\al\}$. By the spectral theorem for the real symmetric matrices,
\begin{equation*}
 \E_\al\{A_\al\}=1+\tau_\al n^{-k} \int\frac{\NN_n^\alpha(d\lambda)}{\lambda-z},
\end{equation*}
 where $\NN_{n}^{\alpha}$ is the counting measure of the eigenvalues of $\M^{\al}$.
 For every $\eta \in\R\setminus\{0\}$, consider
 $$
 E_{\eta}=\Big\{z=\mu+i\eta\,:\,\Big| n^{-k} \int\frac{\NN_n^\alpha(d\lambda)}{\lambda-z}\Big|\le \frac{1}{2|\tau_\al|}\Big\}.
 $$
Clearly, for $z\in E_{\eta}$, $|\mathbf{E}_\al\{A_{\al}\}|\ge 1/2$. If $z=\mu +i\eta\notin E_{\eta}$, then
 $$
 \frac{1}{2|\tau_\al|}<\Big| n^{-k} \int\frac{\NN_n^\alpha(d\lambda)}{\lambda-z}\Big|
 \le\Big( n^{-k} \int\frac{\NN_n^\alpha(d\lambda)}{|\lambda-z|^2}\Big)^{1/2},
 $$
so that
$$
| \E_\al\{A_\al\}|\ge| \Im \E_\al\{A_\al\}|=|\tau_\al||\eta|n^{-k} \int\frac{\NN_n^\alpha(d\lambda)}{|\lambda-z|^2}\ge \frac{|\eta|}{4|\tau_\al|}.
 $$
 This leads to (\ref{EA>}) for $\E_\al\{A_\al\}$. Replacing in our argument $\NN_n^\alpha$ with $\overline{\NN}_n^\alpha$, we get (\ref{EA>}) for $\E\{A_\al\}$.

It follows from the resolvent identity and (\ref{G-G}) that
\begin{align}
zg_n(z)=-1+n^{-k}\Tr \M G=\big(-1+{m}n^{-k}\big)-n^{-k}\sum_\al{A^{-1}_\al}.\label{gn=}
\end{align}
This and the identity
\begin{equation}
\frac{1}{A_\al}=\frac{1}{\mathbf{E}\{A_\al\}}-\frac{A_\al^\circ}{A_\al\mathbf{E}\{A_\al\}}
\label{1/A}
\end{equation}
lead to
\begin{align*}
&zf_n(z)=\big(-1+n^{-k}\big)-n^{-k}\sum_\al{\E\{A_\al\}}^{-1}+r_n(z),
\\
&r_n(z)=n^{-k}\sum_\al\frac{1}{\E\{A_\al\}}\E\Big\{\frac{A_\al^\circ}{A_\al}\Big\}.
\end{align*}
It follows from the Schwarz inequality that
$$
|\E\{{A_\al^\circ}A_\al^{-1}\}|\le\E\{|{A_\al^\circ}|^2\}^{1/2}\E\{|A^{-2}_\al|\}^{1/2}.
$$
Note that  since $\E\{||Y_\al||=1\}$, we have by (\ref{good}) $\E\{||Y_\al||^4\}\le C$.
This and (\ref{A>}) imply that $\E\{|A^{-2}_\al|\}$ is uniformly bounded in $|\tau_\al|\le L$ and $z\in K$. We also have
\begin{equation}
A_{\alpha}^\circ=(A_{\alpha })_\alpha^\circ+\tau_{\alpha}(g_n^{\alpha})^{\circ}=
\tau_{\alpha}\big[(G^\al \Y_\al,\Y_\al)_\alpha^\circ+(g_n^{\alpha})^{\circ}\big],\label{A0Aa0}
\end{equation}
hence
\begin{equation*}
\E\{|{A_\al^\circ}|^2\}=\tau_\al^2 \Big(\E\{\E_\al\{|(G^\al \Y_\al,\Y_\al)_\alpha^\circ|^2\}\}+\E\{|(g_n^{\alpha})^{\circ}|^2\}\Big).
\end{equation*}
By (\ref{mn}) and (\ref{varg1}) with $p=n^k$, $\V\{g_n^\al\}\le Cn^{-k}|\Im z|^{-2}$. It follows from (\ref{good}) and Lemma \ref{l:Ayyl} with $\mathcal{H}=G^\al$ and $\ell=2$ that
 \begin{equation*}
\E_\al\{|(G^\al \Y_\al,\Y_\al)_\alpha^\circ|^2\}\le C_2k ||G^\al||^{2}\delta_n\le C_2k |\Im z|^{-2}\delta_n.
\end{equation*}
Thus, $\E\{|{A_\al^\circ}|^2\}\le CL^2|\Im z|^{-2}(k\delta_n+n^{-k})$.
This and (\ref{EA>}) yield
\begin{equation}
\label{rnk}
|r_n|\le C(k{\delta_n}+n^{-k})^{1/2}.
\end{equation}
uniformly in $|\tau_\al|\le L$ and $z\in K$. Hence
\begin{equation}
\label{eqfn}
zf_n(z)=(-1+{m}n^{-k})-n^{-k}\sum_\al(1+\tau_\al f_n^\al(z))^{-1}+o(1).
\end{equation}
It follows from (\ref{AB}) and (\ref{B/A<}) that
\begin{equation}
\label{ffa}
|f_n(z)-f_n^\al(z)|\le n^{-k}|\Im z|^{-1}.
\end{equation}
This and (\ref{EA>}) implies that $|1+\tau_\al f_n(z)|^{-1}$ is uniformly bounded in $|\tau_\al|\le L$ and $z\in K$.
Hence, in (\ref{eqfn}) we can replace $f_n^\al$  with $f_n$ (the corresponding error term is of the order $O(n^{-k})$) and pass to the limit as
 $n\rightarrow\infty$.
Taking into account (\ref{sigma}) we get  that the limit of every convergent subsequence of $\{f_n(z)\}_n$ satisfies (\ref{MPE}).
This  finishes the proof of the theorem under assumption (\ref{L}).

Consider now the general case  and take any sequence $\{\sigma_n\}=\{\sigma_{m(n)}\}$ satisfying (\ref{sigma}). For any $L>0$, introduce the truncated random variables
\begin{equation*}
\tau _{\alpha }^{L}=\left\{
\begin{array}{cc}
\tau _{\alpha }, & |\tau _{\alpha }|<L, \\
0, & \text{otherwise}. 
\end{array}%
\right.  
\end{equation*}%
 Denote $\M^L=\sum_{\alpha =1}^{m}\tau _{\alpha }^{L}\Y_{\alpha}\Y_{\alpha}^T.$ Then
\begin{equation*}
\mathrm{rank}(\M-\M^L)\leq \mathrm{Card}\{\alpha \in [m]:\,|\tau _{\alpha }|\geq L\}.
\end{equation*}%
 Take any sequence $\{L_i\}_i$ which does not contain atoms of $\sigma$ and tends to infinity as $i\rightarrow
\infty $. If $N_{n}^{L_i}$ is the NCM of the eigenvalues of $%
\M^{L_i}$ and $\overline{N}_{n}^{L_i}$ is its expectation, then the
mini-max principle implies that for any interval $\Delta \subset \mathbb{R}$:%
\begin{equation*}
|\overline{N}_{n}(\Delta )-\overline{N}_{n}^{L_i}(\Delta )|\leq \int_{|\tau|\ge L_i}\sigma_n(d\tau).
\end{equation*}%
We have
\begin{equation*}
 \int_{|\tau|\ge L_i}\sigma_n(d\tau)=
\int_{|\tau|\ge L_i}(\sigma_n-\sigma)(d\tau)+\int_{|\tau|\ge L_i}\sigma(d\tau),
\end{equation*}%
where by (\ref{sigma}) the first term on the r.h.s. tends to zero as $n\rightarrow
\infty $. Hence,
$$
\lim_{L_i\rightarrow\infty}\lim_{n\rightarrow\infty}\int_{|\tau|\ge L_i}\sigma_n(d\tau)=0.
$$
 Thus if $f$ and $f^{L_{i}}$ are  the Stieltjes transforms of $\overline{N}$ and $\lim_{n\rightarrow\infty}\overline{N}_{n}^{L_{i}}$, then
 $$
 f(z)=\lim_{i\rightarrow\infty} f^{L_{i}}(z)
 $$
  uniformly on $K$.
It follows from the first part of the proof that
\begin{equation}
zf^{L_{i}}(z)=-1-c_{L_{i}}f^{L_{i}}(z)\int_{-L_i}^{L_{i}}\tau(1+\tau f^{L_{i}}(z))^{-1}\sigma(d\tau),\label{MPEL}
\end{equation}
where $c_{L_{i}}=c\sigma[-L_i,L_{i}]\rightarrow c$ as $L_{i}\rightarrow\infty$. Since $%
N(\mathbb{R})=1$, there exists $C>0$, such that%
\begin{equation*}
\min_{z\in K}|\Im f(z)|=C>0.
\end{equation*}%
Hence we have for all sufficiently big $L_{i}$:%
\begin{equation*}
\min_{z\in K}|\Im f^{L_{i}}(z)|=C/2>0.
\end{equation*}%
Thus $|\tau /(1+\tau f^{L_{i}}(z))|\leq |\Im f^{L_{i}}(z)|^{-1}\leq
2/C<\infty ,\;z\in K$. This allows us to pass to the limit $L_i\rightarrow
\infty $ in (\ref{MPEL}) and to obtain (\ref{MPE}) for $f$, which completes the proof of the theorem.
 \qed

 \begin{remark}
 It follows from the proof that in the model we can take $k$ depending on $n$ such that
 $$
 k\rightarrow\infty\quad\text{and}\quad k\delta_n\rightarrow0
 $$
 as $n\rightarrow\infty$, and the theorem remains valid (see (\ref{rnk})).
 \end{remark}

\section{Variance of bilinear forms}
\label{s:variance}

\begin{lemma}
\label{l:form}
Let $\Y$ be defined in (\ref{Y}) -- (\ref{iso}), where $\y$ has an unconditional distribution and satisfies
(\ref{a220}).
Then  for every symmetric $n^k\times n^k$ matrix $H$ which does not depend on $\y$ and whose operator norm is uniformly bounded in $n$, there is an absolute constant $C$ such that
\begin{align}
\label{Varform<}
n\V\{(H\Y,\Y)\}\le Cn^{-k}||H||_{HS}^2\le C||H||^2.
\end{align}
If additionally $\y$ satisfies (\ref{a22}) -- (\ref{k4}), then we have
\begin{align}
\label{formk}
n\V\{(H\Y,\Y)\}=&k\aaa |n^{-k}\Tr H|^2
\\
&+n^{-2k+1}\sum_{i=1}^k\sum_{\jj,\pp}\big[2H_{\jj,\,\jj(p_i)}\overline{H}_{\pp,\,\pp(j_i)}
+\bb H_{\jj,\,\jj}\overline{H}_{\pp,\,\pp}\delta_{p_ij_i}\big]+O(n^{-1}),\notag
\end{align}
where $\jj(p_i)=\{j_1,...,j_{i-1},p_i,j_{i+1},...,j_k\}$.
\end{lemma}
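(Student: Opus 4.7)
The plan is to compute $\V\{(H\Y,\Y)\}$ by expanding its second moment as a sum over four $k$-multiindices and exploiting the tensor form of $\Y$. Independence of $\y^{(1)},\dots,\y^{(k)}$ yields
\begin{align*}
\E\{|(H\Y,\Y)|^2\}=\sum_{\jj,\pp,\s,\rr}H_{\jj\pp}\overline{H}_{\s\rr}\prod_{i=1}^k\E\{y^{(i)}_{j_i}y^{(i)}_{p_i}y^{(i)}_{s_i}y^{(i)}_{r_i}\},
\end{align*}
while $\E\{(H\Y,\Y)\}=n^{-k}\Tr H$ by (\ref{iso}). Under the unconditional assumption, all odd moments vanish and one has the identity
\begin{align*}
\E\{y_ay_by_cy_d\}=a_{2,2}\bigl(\delta_{ab}\delta_{cd}+\delta_{ac}\delta_{bd}+\delta_{ad}\delta_{bc}\bigr)+\kappa_4\,\delta_{abcd},
\end{align*}
where $\delta_{abcd}=1$ iff $a=b=c=d$ and $0$ otherwise. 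Expanding $\prod_i(\cdots)$ produces $4^k$ terms indexed by patterns $\sigma\colon[k]\to\{1,2,3,q\}$, where $1,2,3$ label the three pair-matchings and $q$ labels the four-fold coincidence.

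The three classes of $\sigma$ giving nontrivial contributions after multiplying by $n$ are: (i) $\sigma\equiv 1$; (ii) one coordinate $i_0$ has $\sigma(i_0)\in\{2,3\}$, the rest equal $1$; (iii) one coordinate $i_0$ has $\sigma(i_0)=q$, the rest equal $1$. Class (i) yields $a_{2,2}^k|\Tr H|^2$, and subtracting $|\E\{(H\Y,\Y)\}|^2=n^{-2k}|\Tr H|^2$ together with the expansion $a_{2,2}^k=n^{-2k}(1+\aaa k/n+O(n^{-2}))$ from (\ref{a22}) produces the $k\aaa|n^{-k}\Tr H|^2$ term. In class (ii), after imposing the deltas and relabeling the free indices, both pair-$2$ and pair-$3$ reduce to the same quantity $M_i=\sum_{\jj,\pp}H_{\jj,\jj(p_i)}\overline{H}_{\pp,\pp(j_i)}$ (pair-$2$ requires the symmetry of $H$ to swap the two arguments of $\overline H$), giving a factor of $2$. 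In class (iii) the deltas collapse $\pp=\jj$, $\rr=\s$, $s_{i_0}=j_{i_0}$, yielding $\kappa_4 a_{2,2}^{k-1}\sum_iK_i$ with $K_i=\sum_{\jj,\pp}H_{\jj,\jj}\overline{H}_{\pp,\pp}\delta_{j_ip_i}$; using (\ref{k4}) this contributes the $\bb$ term. Multiplying by $n$ and summing (i)--(iii) reproduces the right-hand side of (\ref{formk}).

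The main obstacle is the remainder estimate: for each $\sigma$ with two or more non-$1$ positions I must show the contribution to $n\V\{(H\Y,\Y)\}$ is $O(n^{-1})||H||^2$. The prefactor $a_{2,2}^{k-t}\kappa_4^t$ is always $O(n^{-2k})$, so this reduces to bounding the constrained matrix sum by $O(n^{2k-2})||H||^2$. This follows from Cauchy--Schwarz: $|\sum H_{\jj\pp}\overline{H}_{\s\rr}|\le(\sum|H_{\jj\pp}|^2)^{1/2}(\sum|H_{\s\rr}|^2)^{1/2}$, with the sums taken over all surviving free indices. Each non-$1$ position either identifies an index of $\jj\pp$ with one of $\s\rr$ or collapses four indices to one, and whenever two or more such positions are present each factor is bounded by $n^{k-2}||H||_{HS}^2\le n^{2k-2}||H||^2$ (the extra factor $n^{k-2}$ coming from free indices not appearing in $H_{\jj\pp}$). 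The bookkeeping over the mixed patterns (two swaps, one swap plus one quad, two quads, etc.) is the heaviest step but reduces to the same Cauchy--Schwarz estimate in every case.

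The bound (\ref{Varform<}) under the weaker hypothesis (\ref{a220}) follows from the same expansion with slightly cruder estimates: using $|\Tr H|^2\le n^k||H||_{HS}^2$ and $|M_i|,|K_i|\le n^{k-1}||H||_{HS}^2$ for the leading classes together with the Cauchy--Schwarz bounds above for residual patterns, one obtains $n\V\{(H\Y,\Y)\}\le Cn^{-k}||H||_{HS}^2$.
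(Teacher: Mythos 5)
Your proposal is correct and follows essentially the same strategy as the paper: expand $\E\{|(H\Y,\Y)|^2\}$ using the four-moment identity for unconditional vectors, split the product over coordinates $i\in[k]$ into a leading ``all pair-$1$'' term, the $k$ terms with exactly one non-trivial coordinate (which produce the $2M_i$ and $K_i$ contributions, with the symmetry of $H$ used exactly where you invoke it), and a remainder over patterns with at least two non-trivial coordinates. The only cosmetic differences are that the paper parametrizes by the subset $W\subset[k]$ of coordinates where $w_i$ is chosen (a $2^k$-term expansion with each $w_i$ still containing three sub-terms) rather than by $\sigma\colon[k]\to\{1,2,3,q\}$, and that the paper bounds the remainder by the AM--GM inequality $|H_{\jj\s}||H_{\pp\q}|\le(|H_{\jj\s}|^2+|H_{\pp\q}|^2)/2$ together with the counting estimate $\sum_{\pp,\q}\Lambda(W,\cdot)=O(n^{-k-|W|})$, whereas you use Cauchy--Schwarz directly; both yield the same bound $O(n^{-k-|W|})\|H\|_{HS}^2$ per pattern.
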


\begin{proof}
Since $\y$ has an unconditional distribution, we have
\begin{equation}
 \mathbf{E}\{y_{j}y_{s}y_{p}y_{q}\}=
 a_{2,2}(\delta_{js}\delta_{pq}+\delta_{jp}\delta_{sq}+\delta_{jq}
 \delta_{sp})+\kappa_{4}\delta_{js}\delta_{jp}\delta_{jq}.
\label{yyyy}
 \end{equation}
Hence,
\begin{align*}
\E\{|(H\Y,\Y)|^2\}=&\sum_{\jj,\s,\pp,\q}H_{\jj,\,\s}\overline{H}_{\pp,\,\q}
\prod_{i=1}^k\Big[ a_{2,2}\delta_{j_is_i}\delta_{p_iq_i}+w_i\Big],\notag
\end{align*}
where
$$
w_i=w_i(\jj,\s,\pp,\q)=a_{2,2}(\delta_{j_ip_i}\delta_{s_iq_i}+\delta_{j_iq_i}
 \delta_{s_ip_i})+\kappa_{4}\delta_{j_is_i}\delta_{j_ip_i}\delta_{j_iq_i}.
 $$
 For $W\subset[k]$, $W^c=[k]\setminus W$, denote
\begin{align*}
\Lambda(W,\jj,\s,\pp,\q)=
\prod_{i\in W^c}( a_{2,2}\delta_{j_is_i}\delta_{p_iq_i})\prod_{\ell\in W}w_\ell.
\end{align*}
For every fixed $W,\jj,\s$, we have
 \begin{align}
\sum_{\pp,\q}\Lambda(W,\jj,\s,\pp,\q)=O(n^{-k-|W|}).\label{sumLW}
\end{align}
Indeed,  the number of pairs for which $\Lambda(W,\jj,\s,\pp,\q)\neq 0$ does not exceed $2^{|W|}n^{k-|W|}$ (the number of choices of indices $p_i=q_i$ for $i\notin W$ equals to $n^{k-|W|}$; all other indices $p_\ell,\, q_\ell$ ($\ell\in W$) must satisfy $\{p_\ell,\, q_\ell\}=\{j_\ell,\, s_\ell\}$ and, therefore, can be chosen in at most two ways each). Since $a_{2,2}$, $w_i=O(n^{-2})$, (\ref{sumLW}) follows.

For every fixed $W$,
\begin{align}
\sum_{\jj,\s,\pp,\q}|H_{\jj,\,\s}||{H}_{\pp,\,\q}|\Lambda(W,\jj,\s,\pp,\q)&\le
\sum_{\jj,\s,\pp,\q}\big(|H_{\jj,\,\s}|^2+|H_{\pp,\,\q}|^2\big)\Lambda(W,\jj,\s,\pp,\q)/2\notag
\\
&=O(n^{-k-|W|})||H||_{HS}^2.\label{OW}
\end{align}
Since by (\ref{iso})
$
\E\{(H\Y,\Y)\}=n^{-k}\Tr H,
$
we have
\begin{align}
\label{V=}
\V\{(H\Y,\Y)\}
=\sum_{r=0}^k\sum_{|W|=r}\sum_{\jj,\s,\pp,\q}H_{\jj,\,\s}\overline{H}_{\pp,\,\q}\Lambda(W,\jj,\s,\pp,\q)-n^{-2k}|\Tr H|^2.
\end{align}
  By (\ref{a220}), the term corresponding to $W=\emptyset$, $W^c=[k]$, has the form
\begin{align*}
T_0:=\sum_{\jj,\s,\pp,\q}H_{\jj,\,\s}\overline{H}_{\pp,\,\q}\prod_{i=1}^k( a_{2,2}\delta_{j_is_i}\delta_{p_iq_i})=
a_{2,2}^k|\Tr H|^2.
\end{align*}
This and (\ref{a220}) imply that
\begin{align*}
n\big|T_0-n^{-2k}|\Tr H|^2\big|\le C n^{-k}||H||_{HS}^2,
\end{align*}
and by (\ref{a22}),
\begin{align}
n(T_0-n^{-2k}|\Tr H|^2)=ka n^{-2k}|\Tr H|^2+O(n^{-1}).\label{W0}
\end{align}
The term corresponding to  $\sum_{|W|=1}$ (i.e. $W=\{1\},...,W=\{k\}$), has the form
\begin{align*}
T_1:&=\sum_{i=1}^k\sum_{\jj,\s,\pp,\q}H_{\jj,\,\s}\overline{H}_{\pp,\,\q}\,w_i(\jj,\s,\pp,\q)
 \prod_{\ell\neq i}a_{2,2}\delta_{j_\ell s_\ell}\delta_{p_\ell q_\ell}\notag
 \\
& =\sum_{i=1}^k\sum_{\jj,\pp}\big[a_{2,2}^kH_{\jj,\,\jj(p_i)}\overline{H}_{\pp,\,\pp(j_i)}
+a_{2,2}^{k-1}\kappa_4 H_{\jj,\,\jj}\overline{H}_{\pp,\,\pp}\delta_{p_ij_i}\big],
\end{align*}
and by (\ref{a22})
\begin{align}
nT_1 =n^{-2k+1}\sum_{i=1}^k\sum_{\jj,\pp}\big[2H_{\jj,\,\jj(p_i)}\overline{H}_{\pp,\,\pp(j_i)}
+\bb H_{\jj,\,\jj}\overline{H}_{\pp,\,\pp}\delta_{p_ij_i}\big]+O(n^{-1}).\label{W1}
\end{align}
Also it follows from (\ref{OW}) that the terms corresponding to $W$: $|W|\ge2$ are less than $Cn^{-k-2}||H||_{HS}^2$.
Summarizing (\ref{V=}) -- (\ref{W1}), we get (\ref{Varform<}) and (\ref{formk}) and complete the proof of the lemma.
\end{proof}

\section{Proof of Lemma \ref{l:apriory}}
\label{s:apriory}

\begin{lemma}
\label{l:varg} Let $\{\tau
_{\alpha }\}_{\alpha}$ be a collection of real numbers satisfying (\ref{sigma}), (\ref{m4}), and let $\mathbf{y}$ be a normalized isotropic vector having
an unconditional distribution and satisfying (\ref{a220}). Consider the corresponding matrix $\M$  (\ref{M}) and the trace of its resolvent
$\gamma _{n}(z)=\Tr (\M-zI)^{-1}$. We have
\begin{equation}
\mathbf{Var}\{\gamma _{n}(z)\}\leq C n^{k-1}|\Im z|^{-6}.  \label{varg}
\end{equation}%
If additionally  $\mathbf{y}$ satisfies (\ref{m4Ayy}) and $\tau_\al$ are uniformly bounded in $\al$ and $m$, then
\begin{equation}
\mathbf{E}\{|\gamma _{n}^{\circ }(z)|^{4}\}\leq Cn^{2k-2}|\Im z|^{-12}.
\label{g4<}
\end{equation}
\end{lemma}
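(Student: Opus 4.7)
The plan is to reduce the second (resp.\ fourth) moment of $\gamma_n^\circ(z)$ to a sum of $\al$-conditional increments via Lemma \ref{l:mart2} and then to estimate each increment through the rank-one resolvent identity (\ref{AB}) together with the bilinear-form variance bound of Lemma \ref{l:form} (resp.\ the fourth-moment bound of Lemma \ref{l:Ayyl} with $\ell=4$). The key observation is that $\gamma_n^\al$ is independent of $Y_\al$, so that (\ref{AB}) gives $(\gamma_n)^\circ_\al = -(B_\al/A_\al)^\circ_\al$ and all the work reduces to controlling this centered ratio.

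Applying Lemma \ref{l:mart2} with $\Phi=\gamma_n$ and $\xi_\al=Y_\al$ yields
\[
\V\{\gamma_n\}\le C\sum_{\al=1}^m\E\{|(B_\al/A_\al)^\circ_\al|^2\},\qquad
\E\{|\gamma_n^\circ|^4\}\le Cm\sum_{\al=1}^m\E\{|(B_\al/A_\al)^\circ_\al|^4\}.
\]
To handle the ratio I would use the identity (\ref{1/A}) applied under the conditional expectation $\E_\al$ to split
\[
\frac{B_\al}{A_\al}=\frac{B_\al}{\E_\al\{A_\al\}}-\frac{B_\al\,(A_\al)^\circ_\al}{A_\al\,\E_\al\{A_\al\}},
\]
and then take $(\cdot)^\circ_\al$ of both sides. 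By (\ref{EA>}) the factor $|\E_\al\{A_\al\}|^{-1}$ is bounded by $C(1+|\tau_\al|/|\Im z|)$, and by (\ref{B/A<}) we have the uniform bound $|B_\al/A_\al|\le|\Im z|^{-1}$. Conditioning on $\{Y_\beta\}_{\beta\ne\al}$ makes $G^\al$ and $(G^\al)^2$ deterministic symmetric matrices of operator norms $\le|\Im z|^{-1}$ and $\le|\Im z|^{-2}$, so Lemma \ref{l:form} applies and gives
\[
\V_\al\{A_\al\}\le C\tau_\al^2 n^{-1}|\Im z|^{-2},\qquad \V_\al\{B_\al\}\le C\tau_\al^2 n^{-1}|\Im z|^{-4}.
\]
Combining these estimates in the split above produces $\E\{|(\gamma_n)^\circ_\al|^2\}\le C(\tau_\al^2+\tau_\al^4)n^{-1}|\Im z|^{-6}$. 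Summing over $\al$, using (\ref{m4}) to bound $\sum_\al(\tau_\al^2+\tau_\al^4)\le Cm$, and invoking $m\le Cn^k$ from (\ref{mn}) gives (\ref{varg}).

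For the fourth-moment bound (\ref{g4<}) the strategy is identical, with Lemma \ref{l:Ayyl} at $\ell=4$ replacing Lemma \ref{l:form}. Hypothesis (\ref{m4Ayy}) yields
\[
\E_\al\{|(A_\al)^\circ_\al|^4\}\le C\tau_\al^4 n^{-2}|\Im z|^{-4},\qquad \E_\al\{|(B_\al)^\circ_\al|^4\}\le C\tau_\al^4 n^{-2}|\Im z|^{-8},
\]
and, under the assumed uniform boundedness of $\{\tau_\al\}$, each conditional increment satisfies $\E\{|(\gamma_n)^\circ_\al|^4\}\le Cn^{-2}|\Im z|^{-12}$. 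Summing through Lemma \ref{l:mart2} produces $\E\{|\gamma_n^\circ|^4\}\le Cm^2n^{-2}|\Im z|^{-12}\le Cn^{2k-2}|\Im z|^{-12}$, which is (\ref{g4<}).

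The main technical point is the treatment of the cross-term $B_\al(A_\al)^\circ_\al/(A_\al\E_\al\{A_\al\})$ in the splitting above: the factor $1/A_\al$ is only deterministically controlled, not via a ``good'' expectation, so one cannot again produce a $1/\E_\al\{A_\al\}$ from it. The saving is the pointwise bound (\ref{B/A<}) which lets one extract $|B_\al/A_\al|\le|\Im z|^{-1}$ and reduce the estimate of this cross-term to a second (resp.\ fourth) moment of the single object $(A_\al)^\circ_\al$. Beyond this, the only real work is careful bookkeeping of $\tau_\al$- and $|\Im z|$-powers so that in the first part the moment condition (\ref{m4}) is indeed sufficient, while in the second part the uniform boundedness of $\tau_\al$ is used exactly once, at the places where a $\tau_\al^4$ needs to be absorbed into a constant.
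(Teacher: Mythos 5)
Your proposal is correct and follows essentially the same route as the paper's proof: both apply Lemma \ref{l:mart2} to reduce to $\alpha$-conditional increments, invoke (\ref{AB}) to express $(\gamma_n)^\circ_\alpha$ as $-(B_\alpha/A_\alpha)^\circ_\alpha$, split the ratio via (\ref{1/A}) under $\E_\alpha$ into a $(B_\alpha)^\circ_\alpha$ piece plus a cross-term that is controlled by the pointwise bound (\ref{B/A<}) together with (\ref{EA>}), and then estimate the conditional moments of $(A_\alpha)^\circ_\alpha$ and $(B_\alpha)^\circ_\alpha$ through Lemma \ref{l:form} (for the variance) and Lemma \ref{l:Ayyl} with (\ref{m4Ayy}) (for the fourth moment). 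The only cosmetic difference is that you center $B_\alpha/A_\alpha$ by $\E_\alpha\{B_\alpha/A_\alpha\}$ while the paper compares it with $\E_\alpha\{B_\alpha\}/\E_\alpha\{A_\alpha\}$; these are interchangeable up to absolute constants.
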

\begin{proof}
The proof  follows the scheme proposed in \cite{Sh:11} (see also Lemma 3.2 of \cite{GLPP:13}). For $q=1,2$, by (\ref{mart2}) we have
\begin{align}
 \mathbf{E}\{|\gamma_n^\circ|^{2q}\}&\le Cm^{q-1}\sum_{\alpha}\mathbf{E}\{|(\gamma_n)^\circ_\alpha|^{2q}\}.\label{gq}
\end{align}
Applying (\ref{AB}), (\ref{B/A<}), and (\ref{EA>}) we get
 \begin{align}
 \label{gAB}
\mathbf{E}\{|(\gamma_n)^\circ_\alpha|^{2q}\}&= \mathbf{E}\{|\gamma _{n}-\gamma^\alpha_{n}-\mathbf{E}_\alpha\{\gamma _{n}-\gamma^\alpha_{n}\}|^{2q}\}
\\
&\leq C\mathbf{E}\Big\{\Big|\frac{B_{\alpha}}{A_{\alpha}}-
\frac{\mathbf{E}_\alpha\{B_{\alpha}\}}{\mathbf{E}_\alpha\{A_{\alpha}\}}\Big|^{2q}\Big\}=
C\mathbf{E}\Big\{\Big|\frac{(B_{\alpha })^\circ_\alpha}{\mathbf{E}_\alpha\{A_{\alpha}\}}-\frac{B_{\alpha}}{A_{\alpha}}
\cdot\frac{(A_{\alpha})^\circ_\alpha}{\mathbf{E}_\alpha\{A_{\alpha}\}}
\Big|^{2q}\Big\}\notag
\\
&\le
 C(1+|\tau_\al|/|\Im z|)^{2q}\E\big\{\E_\al\{|(B_{\alpha })^\circ_\alpha|^{2q}\}+ \E_\al\{|(A_{\alpha })^\circ_\alpha|^{2q}\}/|\Im z|^{2q}\big\}.\notag
 \end{align}
Here by  (\ref{Varform<})
 \begin{align}
&n\tau_\al^{-2} \E_\al\{|(A_{\alpha })^\circ_\alpha|^{2}\}=n\E_\al\{|(G^{\alpha }Y_\al,Y_\al)^\circ_\alpha|^{2}\}\le Cn^{-k}||G^\al||^2_{HS}
\le C|\Im z|^{-2}\label{AHS}
 \end{align}
 and
  \begin{align}
&n\tau_\al^{-2} \E_\al\{|(B_{\alpha })^\circ_\alpha|^{2}\}\le Cn^{-k}
||(G^{\alpha})^2||^2_{HS}\le|\Im z|^{-4}.\label{BHS}
 \end{align}
This and (\ref{gq}) -- (\ref{gAB}) lead to (\ref{varg}). Also it follows from (\ref{m4Ayy})  and Lemma \ref{l:Ayyl} that
$$
\E_\al\{|(B_{\alpha})^\circ_\alpha|^{4}\},\,\ \E_\al\{|(A_{\alpha})^\circ_\alpha|^{4}\}/|\Im z|^{4}\le C\tau_\al^{4}|\Im z|^{-8}n^{-2},
$$
which leads to (\ref{g4<}).
\end{proof}

\medskip

\noindent {\bf Proof of Lemma \ref{l:apriory}}.
The proof of (\ref{apriory}) is based on the following inequality obtained in \cite{Sh:11}: for $\varphi\in \HH_s$ (see (\ref{Hs})),
\begin{equation*}
\mathbf{Var}\{\mathcal{N}_{n}[\varphi ]\}\leq C_{s}||\varphi
||_{s}^{2}\int_{0}^{\infty }d\eta e^{-\eta }\eta ^{2s-1}\int\mathbf{Var}\{\gamma _{n}(\mu +i\eta )\}d\mu.
\end{equation*}%
Let $z=\mu +i\eta$, $\eta>0$. It follows from (\ref{gq}) -- (\ref{BHS}) that
\begin{align*}
\V\{\gamma_n\}&\le \sum_{\alpha}\mathbf{E}\{|(\gamma_n)^\circ_\alpha|^{2}\}
\\
&\le C n^{-k-1}\sum_{\alpha}\tau_\al^{2}(1+\eta^{-2}\tau_\al^{2})\mathbf{E}\{||(G^{\alpha})^2||^2_{HS}+\eta^{-2}||G^{\alpha}||^2_{HS}\}.
\end{align*}
By the spectral theorem for the real symmetric matrices,
\begin{equation*}
\E\big\{||G^{\alpha}||^2_{HS}\big\}= \int\frac{\overline{\NN^\alpha_n}(d\lambda)}{|\lambda-z|^2}, \quad
\E\big\{||(G^{\alpha})^2||^2_{HS}\big\}=  \int\frac{\overline{\NN^\alpha_n}(d\lambda)}{|\lambda-z|^4},
\end{equation*}
 where $\overline{\NN_{n}^{\alpha}}$ is the expectation of the counting measure of the eigenvalues of $\M^{\al}$. We have
 \begin{equation*}
 n^{-k}\int \int\frac{\overline{\NN^\alpha_n}(d\lambda)}{|\lambda-z|^2}d\mu\le C\eta^{-1}, \quad  n^{-k} \int\int\frac{\overline{\NN^\alpha_n}(d\lambda)}{|\lambda-z|^4}d\mu\le C\eta^{-3}.
\end{equation*}
Summarizing, we get
\begin{equation*}
\mathbf{Var}\{\mathcal{N}_{n}[\varphi ]\}\leq Cn^{k-1}||\varphi
||_{s}^{2}\int_{0}^{\infty }d\eta e^{-\eta }\eta ^{2s-6}\le Cn^{k-1}||\varphi
||_{s}^{2}
\end{equation*}%
provided that $s>5/2$.
This finishes the proof of Lemma \ref{l:apriory}.

\medskip

\section{Case $k=2$. Some preliminary results}
\label{s:crocodile}

From now on we fix $k=2$ and consider matrices $\M=\mathcal{M}_{n,m,2}$. For every $\jj=\{j_1,j_2\}=j_1j_2$,
$$
\sum_{\jj}=\sum_{j_1,j_2},\quad \sum_{j}=\sum_{j=1}^n.
$$

In this section we establish some asymptotic properties of $A_\al$,  $(G^\al Y_\al,Y_\al)$, and their central moments.
We start with
\begin{lemma}
\label{l:aux} Under conditions of Theorem \ref{t:main},
\begin{align}
&\E_{\al}\{|(A_{\alpha })_\al^\circ|^p\}\le C(\tau_\alpha/|\Im z|)^{p}n^{-p/2},\label{EA0p<}
\\
&\E_{\al}\{|(B_{\alpha })_\al^\circ|^p\}\le C(\tau_\alpha/|\Im z|^2)^{p}n^{-p/2},  \notag
\end{align}
and
\begin{align}
&\mathbf{E}\{|A_{\alpha }^\circ|^p\},\,\mathbf{E}\{|B_{\alpha}^\circ|^p\}=O(n^{-p/2}),\quad 2\le p\le 6.\label{A0B0}
\end{align}
\end{lemma}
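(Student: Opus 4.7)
The bounds (\ref{EA0p<}) follow from Lemma \ref{l:Ayyl} applied to $Y_\al$ conditionally on all other $Y_\beta$, while (\ref{A0B0}) additionally requires control of the residual fluctuation of $\E_\al\{A_\al\}-1=\tau_\al g_n^\al(z)$ (and of $\E_\al\{B_\al\}=-\tau_\al(g_n^\al)'(z)$), i.e.\ high-order moment bounds on $g_n$ itself.

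Since $(A_\al)_\al^\circ=\tau_\al(G^\al Y_\al,Y_\al)_\al^\circ$ and $(B_\al)_\al^\circ=\tau_\al((G^\al)^2Y_\al,Y_\al)_\al^\circ$, with $G^\al$ and $(G^\al)^2$ independent of $Y_\al$ and of operator norm at most $|\Im z|^{-1}$ and $|\Im z|^{-2}$ respectively, Lemma \ref{l:Ayyl} (with $k=2$) produces (\ref{EA0p<}) for $p\in\{2,4,6\}$. The required input (\ref{Hyyl}) on the underlying vector $\y$ is supplied, in turn, by Lemma \ref{l:form} (giving $\delta_n=O(n^{-1})$ at $\ell=2$ from the unconditional assumption and (\ref{a220})), by the bound (\ref{m4Ayy}) (giving $\delta_n=O(n^{-2})$ at $\ell=4$; for a CLT-vector this is automatic from (\ref{Ayy6}) via H\"older), and by (\ref{Ayy6}) itself at $\ell=6$. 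Odd orders $p\in\{3,5\}$ follow from H\"older's inequality.

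For (\ref{A0B0}) I decompose
\begin{equation*}
A_\al^\circ=(A_\al)_\al^\circ+\tau_\al(g_n^\al)^\circ,\qquad B_\al^\circ=(B_\al)_\al^\circ-\tau_\al\big((g_n^\al)'(z)\big)^\circ,
\end{equation*}
using $\E_\al\{(G^\al Y_\al,Y_\al)\}=g_n^\al(z)$ and $\E_\al\{((G^\al)^2Y_\al,Y_\al)\}=n^{-2}\Tr(G^\al)^2=-(g_n^\al)'(z)$. The first pieces are controlled by Step 1, the rank-one bound $|g_n-g_n^\al|=O(n^{-2})$ lets me replace $g_n^\al$ by $g_n$, and a Cauchy integral on a slightly larger compact in $\C\setminus\R$ reduces $(g_n)'(z)$ to $g_n$. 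For $p=2,3,4$, Lemma \ref{l:varg} already gives much more than needed: $\V\{\gamma_n\}=O(n)$ and $\E\{|\gamma_n^\circ|^4\}=O(n^2)$ yield $\E\{|g_n^\circ|^p\}=O(n^{-p/2-1})$.

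The only genuinely new case is $p=6$, which I would handle by repeating the martingale argument of Lemma \ref{l:varg} one order higher. By Lemma \ref{l:mart2} with $\nu=6$,
\begin{equation*}
\E\{|\gamma_n^\circ|^6\}\le Cm^{2}\sum_\al\E\{|(\gamma_n)_\al^\circ|^6\},
\end{equation*}
and writing $(\gamma_n)_\al^\circ=-(B_\al/A_\al)_\al^\circ$ via (\ref{AB}) and applying the $\E_\al$-analogue of (\ref{1/A}) together with (\ref{B/A<}) and (\ref{EA>}), each term is bounded by a combination of $\E\{|(A_\al)_\al^\circ|^6\}$ and $\E\{|(B_\al)_\al^\circ|^6\}$, both $O(n^{-3})$ by Step 1. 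Summing over the $m=O(n^2)$ indices $\al$ gives $\E\{|\gamma_n^\circ|^6\}=O(n^3)$ and hence $\E\{|g_n^\circ|^6\}=O(n^{-9})$, which is well beyond the required $O(n^{-3})$. This $p=6$ step is the main obstacle, and it is precisely where the CLT-vector sixth-moment assumption (\ref{Ayy6}) enters in an essential way.
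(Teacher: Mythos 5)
Your proof of (\ref{EA0p<}) is essentially the paper's: the identity $(A_\al)_\al^\circ=\tau_\al(G^\al Y_\al,Y_\al)_\al^\circ$, Lemma \ref{l:Ayyl}, and (\ref{Ayy6}) give the $p=6$ bound, and H\"older handles $p<6$ (the paper invokes $\ell=6$ once and goes down by H\"older; you invoke Lemma \ref{l:Ayyl} separately at $\ell=2,4,6$ --- equivalent but slightly more laborious). A small slip: $\E_\al\{((G^\al)^2 Y_\al,Y_\al)\}=n^{-2}\Tr(G^\al)^2=+(g_n^\al)'(z)$, not $-(g_n^\al)'(z)$; the sign typo does not affect the estimates.

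For (\ref{A0B0}) you take a genuinely different and heavier route. The paper's shortcut is that $(g_n^\al)^\circ$ and $(n^{-2}\Tr(G^\al)^2)^\circ$ are \emph{deterministically} bounded (by $2/|\Im z|$ and $2/|\Im z|^2$ respectively), so for every $p\ge2$ one has $\E\{|(g_n^\al)^\circ|^p\}\le C|\Im z|^{2-p}\E\{|(g_n^\al)^\circ|^2\}=O(n^{-3})$ by (\ref{varg}) with $k=2$, and $O(n^{-3})\subset O(n^{-p/2})$ exactly when $p\le6$. Feeding this (and its analogue for $\Tr(G^\al)^2$, handled either the same way or by your Cauchy-integral argument) into the decomposition (\ref{A0Aa0}) together with (\ref{EA0p<}) gives (\ref{A0B0}) with no new moment computations. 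You instead rerun the martingale argument of Lemma \ref{l:varg} at $\nu=6$, using Lemma \ref{l:mart2}, (\ref{AB}), and the $\E_\al$-version of (\ref{1/A}). This is correct and in fact yields the much stronger estimate $\E\{|g_n^\circ|^6\}=O(n^{-9})$, but it is overkill: only $O(n^{-3})$ is needed, and the crude sup-norm--to--variance reduction already delivers it. Your route also somewhat obscures where the cap $p\le6$ in (\ref{A0B0}) really comes from --- in the paper it is precisely the point where $n^{-3}$ ceases to dominate $n^{-p/2}$. Both derivations are sound; the paper's is what is needed and substantially shorter.
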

\begin{proof}
 Since $(A_{\alpha })_\al^\circ=\tau_\al(G^{\al}Y_\al,Y_\al)_\al^\circ$, Lemma \ref{l:Ayyl} and (\ref{Ayy6}) imply that
$$
\E_{\al}\{|(A_{\alpha })_\al^\circ|^6\}\le C(\tau_\alpha/|\Im z|)^{6}n^{-3},
$$
and by the H$\ddot{\text{o}}$lder inequality we get the first estimate in (\ref{EA0p<}). Analogously one can get the second estimate in (\ref{EA0p<}).
Also we have by (\ref{varg})
$$
\E\{|(g_n^{\al})^{\circ}|^p\}\le |\Im z|^{2-p}\E\{|(g_n^{\al})^{\circ}|^2\}=O(n^{-3}),\quad p\ge 2,
$$
which together with (\ref{A0Aa0}) and  (\ref{EA0p<}) leads to (\ref{A0B0}).
\end{proof}

Let
$$
H=H(z)=G^\al(z).
$$
 It follows from (\ref{formk}) with $k=2$ that
\begin{align}
\label{form2}
n\V\{(H\Y,\Y)\}=&2\aaa |n^{-2}\Tr H|^2
\\
&+2n^{-3}\sum_{\jj,\pp}\big[H_{\jj,\,j_1p_2}\overline{H}_{\pp,\,p_1j_2}+H_{\jj,\,p_1j_2}\overline{H}_{\pp,\,j_1p_2}\big]\notag
\\
&+\bb n^{-3}\sum_{\jj,\pp}H_{\jj,\,\jj}\overline{H}_{\pp,\,\pp}(\delta_{p_1j_1}+\delta_{p_2j_2})+O(n^{-1}).\notag
\end{align}
Consider an $n\times n$ matrix of the form
\begin{equation*}
\G=\{\G_{s,p}\}_{s,p=1}^n,\quad \G_{s,p}=\sum_{j}H_{js,\,jp}.
\end{equation*}%
Since $\G=\sum_j \G^{(j)}$, where for every $j$, $\G^{(j)}=\{H_{js,jp}\}_{s,p}$ is a block of $G^\al$, we have
\begin{equation}
\label{normG}
||\G||\le\sum_j ||\G^{(j)}||\le n||G^\al||\le n/|\Im z|.
\end{equation}
We define functions
\begin{align*}
&g_n^{(1)}(z_1,z_2):=n^{-3}\sum_{\jj,\pp}H_{\jj,\,j_1p_2}(z_1)H_{\pp,\,p_1j_2}(z_2)=n^{-3}\Tr \G(z_1)\G(z_2),\notag
\\
&g_n^{(2)}(z_1,z_2):= n^{-3}\sum_{i,s, j}H_{is,\,is}(z_1)H_{js,\,js}(z_2)= n^{-3}\sum_{s}\G_{ss}(z_1)\G_{ss}(z_2).\notag
\end{align*}
Similarly, we introduce the matrix
\begin{equation*}
\widetilde{\G}=\{\widetilde{\G}_{i,j}\}_{i,j=1}^n,\quad \widetilde{\G}_{i,j}=\sum_{s}H_{is,\,js}
\end{equation*}
and define functions
\begin{align}
&\widetilde{g}_n^{(1)}(z_1,z_2)=n^{-3}\Tr \widetilde{\G}(z_1)\widetilde{\G}(z_2),\quad
\widetilde{g}_n^{(2)}(z_1,z_2)= n^{-3}\sum_{i}\widetilde{\G}_{ii}(z_1)\widetilde{\G}_{ii}(z_2).\label{gtild}
\end{align}
It follows from (\ref{form2}) that
\begin{align}
n\mathbf{E}_\alpha\big\{((H(z) Y_\al,Y_\al)_{\alpha}^{\circ })^2\big\}=2a (g_n^\al(z))^2&+2(g_n^{(1)}(z,z)+\widetilde{g}_n^{(1)}(z,z))
\label{form22}
\\
&+b(g_n^{(2)}(z,{z})+\widetilde{g}_n^{(2)}(z,z))+O(n^{-1}).\notag
\end{align}
We have:
\begin{lemma}
\label{l:f1f2}
Under conditions of Theorem \ref{t:main}, we have for $i=1,2$:
\begin{align}
&\V\{g_n^{(i)}\},\,\V\{\widetilde{g}_n^{(i)}\}=O(n^{-2}),\label{Vargi}
\\
&\lim_{n\rightarrow\infty}\E\{g_n^{(i)}(z_1,z_2)\}=\lim_{n\rightarrow\infty}\E\{\widetilde{g}_n^{(i)}(z_1,z_2)\}=f(z_1)f(z_2),\label{limgi}
\end{align}
where $f$ is the solution of (\ref{MPE}).
\end{lemma}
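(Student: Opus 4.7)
My plan splits into two parts: the convergence of expectations, and the variance bound. Both rely on the distributional symmetries of $\y$ together with the martingale inequality in Lemma \ref{l:mart2}.

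\textbf{Convergence of expectations.} Distributional symmetries pin down $\E\{G^\alpha_{js,j's'}\}$. The unconditional distribution, applied separately to the independent factors $\y^{(1)}$ and $\y^{(2)}$, makes any expansion coefficient of $G^\alpha$ vanish unless every coordinate label of $\y^{(1)}$ and every coordinate label of $\y^{(2)}$ appears an even number of times; with only the two primary labels $j,j'$ and the two secondary labels $s,s'$ ``free,'' this forces $\E\{G^\alpha_{js,j's'}\}=0$ unless $j=j'$ and $s=s'$. Joint permutation invariance on each factor then implies that $\E\{G^\alpha_{js,js}\}$ depends only on whether $j=s$; writing it as $a_n(z)\delta_{js}+b_n(z)(1-\delta_{js})$, the identity $n\,a_n+n(n-1)b_n=n^2 f_n^\alpha(z)\to n^2 f(z)$ from Theorem \ref{t:NCM} yields $b_n(z)\to f(z)$ with $a_n$ bounded. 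Consequently $n^{-1}\E\{\G_{s,p}(z)\}\to \delta_{sp}f(z)$, and analogously $n^{-1}\E\{\widetilde{\G}_{i,j}(z)\}\to \delta_{ij}f(z)$. Decomposing $\G=\E\{\G\}+\G^\circ$ gives
\[
\E\{g_n^{(1)}(z_1,z_2)\}=n^{-3}\Tr\bigl(\E\{\G(z_1)\}\,\E\{\G(z_2)\}\bigr)+n^{-3}\sum_{s,p}\E\{(\G_{s,p}(z_1))^\circ(\G_{p,s}(z_2))^\circ\};
\]
the first term converges to $f(z_1)f(z_2)$ by the display above, and the second (covariance) term is handled by Cauchy--Schwarz together with the per-entry estimate $\V\{\G_{s,p}(z)\}=O(1)$. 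The same scheme applies to $g_n^{(2)}$ and to the corresponding expressions for $\widetilde{\G}$.

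\textbf{Variance bound.} By Lemma \ref{l:mart2} with $\nu=2$ applied to $\{Y_\beta\}_{\beta\neq\alpha}$, it suffices to show $\E\{|(g_n^{(i)})_\beta^\circ|^2\}=O(n^{-4})$ uniformly in $\beta$, since $m=O(n^2)$. Introducing $\G^{\alpha,\beta}$ as the partial trace of the resolvent with both $\tau_\alpha$ and $\tau_\beta$ turned off, the rank-one update formula (\ref{G-G}) gives $\Delta\G:=\G-\G^{\alpha,\beta}=-(\tau_\beta/A^{\alpha,\beta}_\beta)V^T V$, where $V$ is the $n\times n$ ``reshape'' defined by $V_{j,s}=(G^{\alpha,\beta}Y_\beta)_{js}$. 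Since typical components of $G^{\alpha,\beta}Y_\beta\in\R^{n^2}$ are of size $O(1/n)$, the entries of $\Delta\G$ are uniformly $O(1/n)$, while $\E\{\G(z)\}$ has diagonal of size $\sim n$ and vanishing off-diagonal. Hence
\[
|\Tr(\Delta\G_1\cdot\G_2)|\;\le\;\underbrace{n\cdot O(1/n)\cdot O(n)}_{\text{diagonal contribution}}+\underbrace{O(\sqrt n)}_{\text{off-diagonal via Cauchy--Schwarz}}\;=\;O(n),
\]
yielding $|\Delta g_n^{(1)}|=O(n^{-2})$ and the required $O(n^{-4})$ second-moment bound. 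The same rank-one analysis, applied entrywise, delivers the per-entry estimate $\V\{\G_{s,p}\}=O(1)$ used above, and handles $g_n^{(2)},\widetilde{g}_n^{(1)},\widetilde{g}_n^{(2)}$ with trivial modifications.

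\textbf{The main obstacle} is isolating the diagonal ``signal'' of $\G$, of magnitude $\sim n$, from off-diagonal fluctuations when estimating trace interactions. A naive Hilbert--Schmidt bound $\|\G\|_{HS}=O(n^{3/2})$, $\|\Delta\G\|_{HS}=O(1)$ only yields $(g_n^{(1)})_\beta^\circ=O(n^{-3/2})$ and the insufficient $\V=O(n^{-1})$. One must exploit the asymptotic block structure $\G\approx n f(z)I_n$ established in the first part to gain the extra factor of $n^{-1/2}$ needed for the bound $\V\{g_n^{(i)}\}=O(n^{-2})$.
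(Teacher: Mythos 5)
Your proposal takes a genuinely different route from the paper for the limit (\ref{limgi}), and a similar but less tight route for (\ref{Vargi}); both halves have real gaps.

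\textbf{The limit of expectations.} You compute $\E\{\mathcal{G}_{s,p}\}$ directly via the unconditionality and permutation symmetry of the pair $(\mathbf{y}^{(1)},\mathbf{y}^{(2)})$, obtaining $n^{-1}\E\{\mathcal{G}_{s,p}\}\to\delta_{sp}f$, and then try to kill the covariance remainder. This is a different and rather elegant idea; the paper instead derives a self-consistent equation for $f_n^{(1)}$ by applying the resolvent identity to $H_{\jj,\q}(z_1)$ and iterating (\ref{1/A}), exactly parallel to the derivation of (\ref{MPE}). Two remarks: (a) applying independent permutations to the two tensor factors actually shows $\E\{G^\alpha_{js,js}\}$ is a single constant (equal to $f_n^\alpha$), so the $a_n\delta_{js}+b_n(1-\delta_{js})$ decomposition is superfluous; (b) the step where you discard the covariance term is the real content, and your per-entry claim $\V\{\mathcal{G}_{s,p}\}=O(1)$ is simply asserted. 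A martingale decomposition over $\alpha$ of $\mathcal{G}_{s,p}$ gives only the uniform entrywise bound $|(\mathcal{G}_{s,p})^\circ_\alpha|=O(1)$, hence only $\V\{\mathcal{G}_{s,p}\}=O(m)=O(n^2)$, which is far too weak pointwise; to save this route you would need the averaged statement $\sum_{s,p}\V\{\mathcal{G}_{s,p}\}=O(n^2)$ (equivalently $\E\{\|\mathcal{G}^\circ\|^2_{HS}\}=O(n^2)$), which does follow from $\|\mathcal{G}-\mathcal{G}^\alpha\|_{HS}\le |\tau_\alpha/A_\alpha|\,\|G^\alpha Y_\alpha\|^2$ and then suffices after Cauchy--Schwarz on the double sum. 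As written, the argument is incomplete.

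\textbf{The variance bound.} You and the paper both reduce via Lemma \ref{l:mart2} and the rank-one update, but your estimation of $\Tr(\Delta\mathcal{G}_1\,\mathcal{G}_2)$ is not sound. The assertion that the entries of $\Delta\mathcal{G}$ are ``uniformly $O(1/n)$'' does not follow (individual $|(\Delta\mathcal{G})_{sp}|$ can be as large as $O(1)$), and the advertised $O(\sqrt n)$ off-diagonal contribution is not justified — the correct bound there is $O(n)$, which still suffices but requires moment control of $\|\mathcal{G}^{\mathrm{off}}\|_{HS}$ that you do not provide. You correctly diagnose that the diagonal ``signal'' of $\mathcal{G}$ is the obstruction, but the decomposition into diagonal and off-diagonal parts is the hard way to deal with it. The paper's trick (\ref{Hij})--(\ref{Sn1<}) sidesteps the decomposition entirely: write $\Tr(\Delta\mathcal{G}_1\,\mathcal{G}_2)=-(\tau_\alpha/A_\alpha)\sum_j v_j^{T}\mathcal{G}_2 v_j$ with $v_j=((G^\alpha Y_\alpha)_{js})_s$, bound each quadratic form by $\|\mathcal{G}_2\|\,\|v_j\|^2$, and sum over $j$ to get $\|\mathcal{G}_2\|\,\|G^\alpha Y_\alpha\|^2 \le (n/|\Im z|)\cdot\|Y_\alpha\|^2/|\Im z|^2$. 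This single operator-norm estimate replaces the entire diagonal/off-diagonal split, gives a deterministic bound in terms of $\|Y_\alpha\|$, and the needed moment control is just $\E\{\|Y_\alpha\|^p\}\le C$ from (\ref{12}). I would recommend adopting this route; your decomposition can in principle be made to work, but closing the gaps requires precisely the kind of $HS$-norm moment estimates that the operator-norm argument renders unnecessary.
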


\begin{proof}
We prove the lemma for $g_n^{(1)}$, the cases of $\widetilde{g}_n^{(2)}$, $g_n^{(2)}$, and $\widetilde{g}_n^{(2)}$ can be treated similarly.
Without loss of generality  we can assume that in the definitions of $\G$ and $g_n^{(1)}$,  $H=G$. It follows from (\ref{mart2}) that
\begin{align*}
\V\{g_n^{(1)}\}\le \sum_{\al} \E\{|(g_n^{(1)})^\circ_\al|^2\}.
\end{align*}
We have
\begin{align*}
g_n^{(1)}-g_n^{(1)\al}=&n^{-3}\Tr (\G(z_1)-\G^\al(z_1))\G(z_2)
\\
&+n^{-3}\Tr \G^\al(z_1)(\G(z_2)-\G^\al(z_2))=:S_n^{(1)}+S_n^{(2)}.
\end{align*}
Hence
\begin{align*}
(g_n^{(1)})^\circ_\al=g_n^{(1)}-g_n^{(1)\al}- \E_\al\{g_n^{(1)}-g_n^{(1)\al}\}=(S_n^{(1)})^\circ_\al+(S_n^{(2)})^\circ_\al,
\end{align*}
 and to get (\ref{Vargi}), it is enough to show that
\begin{align}
\label{Sni}
\E\{|S_n^{(j)}|^2\}=O(n^{-4}),\quad j=1,2.
\end{align}
Consider $S_n^{(1)}$.
It follows from (\ref{G-G}) that
\begin{align}
S_n^{(1)}= A_\al^{-1} n^{-3}\sum_{s,p}\sum_{j}\G_{s,p}(H^\al \Y_\al)_{js}(H^\al \Y_\al)_{jp}.
\label{Sn1}
\end{align}
Since for $x,\xi\in \R^n$ and an $n\times n$ matrix $D$
\begin{align}
\label{Hij}
\Big|\sum_{i,j}D_{ij}x_i\xi_j\Big|\le ||D||\cdot||x||\cdot||\xi||,
\end{align}
taking into account  $||H||\le 1/|\Im z|$, (\ref{A>}), and (\ref{normG}) we get
\begin{align}
|S_n^{(1)}|&\le n^{-3}(1+|\tau_\al|\cdot|\Im z|^{-1}||Y_\al||^2)\cdot||\G||\cdot||H^\al Y_\al||^2\notag
\\
&\le n^{-2}(1+|\tau_\al|\cdot|\Im z|^{-1}||Y_\al||^2)|\Im z|^{-3}|| Y_\al||^2.\label{Sn1<}
\end{align}
This and following from  (\ref{iso}) and (\ref{Ayy6}) bound
\begin{align}
\label{12}
\E\{||Y_\al||^p\}\le C,\quad p\le 12
\end{align}
 imply (\ref{Sni}) for $j=1$. The case $j=2$ can be treated similarly. So we get (\ref{Vargi}) for $g_n^{(1)}$.

\medskip

Let us prove (\ref{limgi}) for $g_n^{(1)}$. Let $f_n^{(1)}=\E\{g_n^{(1)}\}$. For a convergent subsequence $\{f_{n_i}^{(1)}\}$,
put $f^{(1)}:=\lim_{n_i\rightarrow\infty}f_{n_i}^{(1)}$. It follows from (\ref{G-G}) that
$$
(\Y_\al \Y_\al^TH)_{\jj,\,\q}=A_\al^{-1}\Y_{\al \jj}(H^\al \Y_\al)_{\q}.
$$
This and the resolvent identity yield
\begin{align*}
H_{\jj,\,\q}(z_1)=-z_1^{-1}\delta_{\jj,\,\q}+z_1^{-1}\sum_{\al}\tau_\al A_\al^{-1}(z_1)\Y_{\al \jj}(H^\al (z_1) \Y_\al)_{\q}.
\end{align*}
Hence,
\begin{align*}
z_1f_n^{(1)}(z_1,z_2)=&-f_n(z_2)+n^{-3}\sum_{\jj,\pp}\sum_{\al}\tau_\al \E\Big\{\frac{\Y_{\al \jj}(H^\al(z_1) \Y_\al)_{j_1p_2}}{A_\al(z_1)}H^\al_{\pp,\,p_1j_2} (z_2)\Big\}
\\
&-n^{-3}\sum_{\jj,\pp}\sum_{\al}\tau_\al^2 \E\Big\{\frac{\Y_{\al \jj}(H^\al(z_1) \Y_\al)_{j_1p_2}}{A_\al(z_1)}
\cdot\frac{(H^\al(z_2)\Y_{\al})_\pp(H^\al(z_2) \Y_\al)_{p_1j_2}}{A_\al(z_2)}\Big\}
\\
&=-f_n(z_2)+T_n^{(1)}+T_n^{(2)}.
\end{align*}
By the H$\ddot{\text{o}}$lder inequality, (\ref{A>}),  and (\ref{12})
\begin{align*}
|T_n^{(2)}|&\le n^{-3}\sum_{\al}\tau_\al^2
\E\Big\{\frac{||\Y_{\al }||\cdot||H^\al(z_1) \Y_\al||}{|A_\al(z_1)|}
\cdot\frac{||H^\al(z_2) \Y_\al||\cdot||H^\al(z_2) \Y_\al||}{|A_\al(z_2)|}\Big\}
\\
&\le Cn^{-3}\sum_{\al}\tau_\al^2
\E\{||\Y_{\al }||^4|A_\al(z_1)|^{-1}|A_\al(z_2)|^{-1}\}=O(n^{-1}).
\end{align*}
It follows from (\ref{iso}) that
$$
\E_\al\{\Y_{\al \jj}(H^\al \Y_\al)_{j_1p_2}\}=n^{-2}H^\al_{\jj,\,j_1p_2}.
$$
This and (\ref{1/A}) yield
\begin{align*}
T_n^{(1)}&=n^{-5}\sum_{\jj,\pp}\sum_{\al}\tau_\al \frac{\E\{H^\al_{\jj,\,j_1p_2}H^\al_{\pp,\,p_1j_2} (z_2)\}}{1+\tau_\al f_n^\al(z_1)}+r_n,
\\
r_n&=n^{-3}\sum_{\jj,\pp}\sum_{\al}\frac{\tau_\al}{\E\{A_\al(z_1)\}} \E\Big\{A^\circ_\al(z_1)
\frac{\Y_{\al \jj}(H^\al(z_1) \Y_\al)_{j_1p_2}}{A_\al(z_1)}H^\al_{\pp,\,p_1j_2} (z_2)\Big\}.
\end{align*}
Treating $r_n$ we note that
$$
n^{-1}\sum_{\jj,p_2}\big| \Y_{\al \jj}(H^\al \Y_\al)_{j_1p_2}\mathcal{G}^\al_{p_2,j_2}\big|\le
n^{-1}||\mathcal{G}^\al||\cdot ||Y_\al||\cdot ||H^\al Y_{\al}||\le C ||Y_\al||^2.
$$
Hence, by the Schwarz inequality, (\ref{A>}), (\ref{EA>}), (\ref{A0B0}), and (\ref{12})
\begin{align*}
 |r_n|&\le C n^{-2}\sum_{\al}\E\{|A^\circ_\al|\cdot|A_\al|^{-1}||Y_\al||^2\}
 \\
 &\le C n^{-2}\sum_{\al}\E\{|A^\circ_\al|^2\}^{1/2}\E\{|A_\al|^{-2}||Y_\al||^4\}^{1/2} =O(n^{-1/2}).
\end{align*}
    Also one can replace $f_n^\al$ and $H^\al$ with $f_n$ and $G$ (the error term is of the order $O(n^{-1})$). Hence,
\begin{align*}
z_1f_n^{(1)}(z_1,z_2)=&-f_n(z_2)+f_n^{(1)}(z_1,z_2)n^{-2}\sum_{\al}
\frac{\tau_\al}{1+\tau_\al f_n(z_1)}+o(1).
\end{align*}
This, (\ref{mn}), (\ref{sigma}), and (\ref{MPE}) leads to
$$
f^{(1)}(z_1,z_2)=f(z_2)\Big(c\int\frac{\tau d\sigma(\tau)}{1+\tau f(z_1)}-z_1\Big)^{-1}=f(z_1)f(z_2)
$$
 and  finishes the proof of the lemma.
\end{proof}

\medskip

It   follows from Lemmas \ref{l:form} and \ref{l:f1f2} that
under conditions of Theorem \ref{t:main}
\begin{align}
&\lim_{n\rightarrow\infty}n\tau_\al^{-2}\mathbf{E}\{A_{\al}^\circ(z_1)A_{\al}(z_2)\}=2(a+b+2)f(z_{1})f(z_2),
\label{limAA}
\end{align}
where $f$ is the solution of (\ref{MPE}).

\medskip

\begin{lemma}
\label{l:crocodile} Under conditions of Theorem \ref{t:main}
\begin{align}
\mathbf{Var}\{\mathbf{E}_\alpha\{(A_{\alpha}^\circ)^p\}\}=O(n^{-4}),\quad p=2,3.  \label{varEA}
\end{align}
\end{lemma}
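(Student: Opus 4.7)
The natural starting point is the decomposition $A_\alpha^\circ = (A_\alpha)_\alpha^\circ + \tau_\alpha (g_n^\alpha)^\circ$ from (\ref{A0Aa0}). Since $(g_n^\alpha)^\circ$ is $\E_\alpha$-measurable and $\E_\alpha\{(A_\alpha)_\alpha^\circ\}=0$, the binomial expansion yields
\begin{align*}
\E_\alpha\{(A_\alpha^\circ)^2\} &= \E_\alpha\{((A_\alpha)_\alpha^\circ)^2\} + \tau_\alpha^2 ((g_n^\alpha)^\circ)^2,\\
\E_\alpha\{(A_\alpha^\circ)^3\} &= \E_\alpha\{((A_\alpha)_\alpha^\circ)^3\} + 3\tau_\alpha (g_n^\alpha)^\circ\, \E_\alpha\{((A_\alpha)_\alpha^\circ)^2\} + \tau_\alpha^3 ((g_n^\alpha)^\circ)^3,
\end{align*}
so it suffices to bound the variance of each summand by $O(n^{-4})$.

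From (\ref{g4<}) together with $|g_n^\alpha|\le |\Im z|^{-1}$ I get $\E\{|(g_n^\alpha)^\circ|^{2q}\}=O(n^{-6})$ for $q=2,3$, so the purely $(g_n^\alpha)^\circ$-terms $\tau_\alpha^{2}((g_n^\alpha)^\circ)^2$ and $\tau_\alpha^{3}((g_n^\alpha)^\circ)^3$ both have variance $O(n^{-6})$. For the $p=2$ main term, (\ref{form22}) with $H=G^\alpha$ gives at $z_1=z_2=z$
\[
\E_\alpha\{((A_\alpha)_\alpha^\circ)^2\} = \tau_\alpha^2 n^{-1}\bigl[2a(g_n^\alpha)^2 + 2(g_n^{(1)}+\widetilde g_n^{(1)}) + b(g_n^{(2)}+\widetilde g_n^{(2)})\bigr] + \tau_\alpha^2 O(n^{-2}),
\]
where each random coefficient has variance $O(n^{-2})$ by (\ref{varg1}) and Claim \ref{c:varprod} (for $(g_n^\alpha)^2$) and by (\ref{Vargi}) (for the $g_n^{(i)}, \widetilde g_n^{(i)}$). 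Hence $\V\{\E_\alpha\{((A_\alpha)_\alpha^\circ)^2\}\} = n^{-2}\cdot O(n^{-2}) = O(n^{-4})$, settling the $p=2$ case. The $p=3$ cross term is controlled by Cauchy--Schwarz using the deterministic bound $|\E_\alpha\{((A_\alpha)_\alpha^\circ)^2\}|\le C\tau_\alpha^2 n^{-1}$ from Lemma \ref{l:aux}:
\[
\V\bigl\{(g_n^\alpha)^\circ\, \E_\alpha\{((A_\alpha)_\alpha^\circ)^2\}\bigr\} \le \E\{((g_n^\alpha)^\circ)^4\}^{1/2}\,\E\{|\E_\alpha\{((A_\alpha)_\alpha^\circ)^2\}|^4\}^{1/2} = O(n^{-3})\cdot O(n^{-2}) = O(n^{-5}).
\]

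The remaining and principal obstacle is $\V\{\E_\alpha\{((A_\alpha)_\alpha^\circ)^3\}\}$. The naive bound $|\E_\alpha\{((A_\alpha)_\alpha^\circ)^3\}| \le \E_\alpha\{|(A_\alpha)_\alpha^\circ|^2\}^{1/2}\E_\alpha\{|(A_\alpha)_\alpha^\circ|^4\}^{1/2} = O(\tau_\alpha^3 n^{-3/2})$ from Lemma \ref{l:aux} yields only $\V = O(n^{-3})$, which is insufficient. I would therefore establish a third-moment analogue of Lemma \ref{l:form}: expanding
\[
\E_\alpha\{((H Y_\alpha,Y_\alpha)_\alpha^\circ)^3\} = \sum_{\jj_i,\pp_i} H_{\jj_1\pp_1}H_{\jj_2\pp_2}H_{\jj_3\pp_3}\,C(\jj_1,\pp_1,\ldots,\jj_3,\pp_3),
\]
and using the sixth-order moment assumption (\ref{m6}) together with unconditional permutational invariance to identify the surviving pairing patterns (only those in which every index appears an even number of times). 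The bookkeeping, parallel to (\ref{V=})--(\ref{W1}), should produce
\[
\E_\alpha\{((H Y_\alpha,Y_\alpha)_\alpha^\circ)^3\} = n^{-2} F(H) + O(n^{-5/2}),
\]
where $F(H)$ is a polynomial in trace-type quantities (analogues of $n^{-2}\Tr H$, $n^{-3}\Tr \G^2$, $n^{-3}\sum_s \G_{ss}^2$ and the $\widetilde\G$-versions) that is uniformly bounded for $H=G^\alpha$ on $K\subset \C\setminus\R$. Substituting $H=G^\alpha$, the prefactor $\tau_\alpha^3 n^{-2}$ yields variance $\tau_\alpha^6 n^{-4}\V\{F(G^\alpha)\}=O(n^{-4})$ for the main term, since $F$ is bounded (one can in fact sharpen this to $O(n^{-6})$ via the concentration argument of Lemma \ref{l:f1f2}), while the deterministic $O(n^{-5/2})$ remainder contributes variance $O(n^{-5})$. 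The technical heart of the argument is therefore the combinatorial enumeration of the third-cumulant pairing patterns together with the routine verification that each resulting trace is uniformly bounded; once these are in place, $\V\{\E_\alpha\{((A_\alpha)_\alpha^\circ)^3\}\} = O(n^{-4})$ follows and the lemma is proved.
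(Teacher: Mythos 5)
Your outline matches the paper's strategy — the decomposition via (\ref{A0Aa0}), the reduction of the pure $(g_n^\alpha)^\circ$-powers via (\ref{g4<}), the use of (\ref{form22}) and (\ref{Vargi}) for $p=2$, and the cross-term via Cauchy--Schwarz are all correct and mirror the paper. The gap is exactly where you flag it: the ``principal obstacle'' is not actually resolved. You posit that a third-moment analogue of Lemma~\ref{l:form} will yield
$\E_\alpha\{((HY_\alpha,Y_\alpha)_\alpha^\circ)^3\}=n^{-2}F(H)+O(n^{-5/2})$ with $F$ a uniformly bounded polynomial in trace-type quantities, and declare the lemma proved ``once these are in place.'' But this asserted form is precisely what needs to be established, and as stated it elides a genuine difficulty: the raw expansion of $\E_\alpha\{(HY_\alpha,Y_\alpha)^3\}$ contains the $O(1)$ term $(a_{2,2,2})^2(\Tr H)^3$, whose fluctuation is $O(n^{-3/2})$, not $O(n^{-5/2})$; it is only after pairing it against the $(g_n^\alpha)^3$ correction from centering that one obtains a prefactor $(a_{2,2,2})^2-n^{-6}=O(n^{-7})$ making the variance of this piece $O(n^{-5})$. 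Similarly, several of the five- and six-index contributions are of size $O(n^{-1})$ (e.g.\ $O(n^{-1})\,g_n^\alpha\,\widetilde g_n^{(2)}$ and $O(n^{-1})\,g_n^\alpha\,g_n^{(1)}$), not $O(n^{-2})$, and their smallness of variance comes from the concentration of $g_n^\alpha$, $g_n^{(i)}$, $\widetilde g_n^{(i)}$ (via \ref{varg}, \ref{Vargi}, and Claim~\ref{c:varprod}) rather than from a deterministic $n^{-2}$ prefactor. The paper therefore does not try to identify a single bounded $F$; instead it reduces to $\V\{\E_\alpha\{(HY_\alpha,Y_\alpha)^3\}-g_n^{\alpha\,3}\}=O(n^{-4})$, enumerates the index schemes $\Phi$ with $|\Phi|=2,\dots,6$, and bounds the variance of each class separately using the operator-norm and Hilbert--Schmidt bounds (\ref{main})--(\ref{form}). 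That case analysis is the entire substance of the $p=3$ proof, and your proposal leaves it as a conjecture whose stated form does not quite match what must actually be verified.
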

\begin{proof}
 Since $\tau_\al$, $\al\in[m]$, are uniformly bounded in $\al$ and $n$, then to get the desired bounds it is enough to consider the case
  $\tau_\al=1$, $ \al\in[m]$. By (\ref{A0Aa0}), we have
\begin{align*}
&\E_\al\{(A_{\al}^\circ)^2\}=\E_\al\{(H Y_\al,Y_\al)_\al^{\circ 2}\}+(g_n^\al)^{\circ 2},
\\
&\E_\al\{(A_{\al}^\circ)^3\}=\E_\al\{(H Y_\al,Y_\al)_\al^{\circ 3}\}+3\E_\al\{(H Y_\al,Y_\al)_\al^{\circ 2}\}g_n^{\al\circ }+(g_n^\al)^{\circ 3},
\end{align*}
where by (\ref{g4<}) $\E\{|(g_n^\al)^{\circ }|^{2p}\}=O(n^{-6})$, $p=2,3$,  and by (\ref{EA0p<}) and (\ref{varg})
$$
\E\{|\E_\al\{(H Y_\al,Y_\al)_\al^{\circ 2}\}(g_n^\al)^{\circ }|^2\}=O(n^{-2})\E\{|(g_n^\al)^{\circ }|^2\}=O(n^{-5}).
$$
Hence,
$$
\V\{\E_\al\{(A_{\al}^\circ)^p\}\}\le 2\V\{\E_\al\{(H Y_\al,Y_\al)_\al^{\circ p}\}\}+O(n^{-4}),\quad p=2,3.
$$
 It also follows from (\ref{form22})
and Lemmas \ref{l:varg} and \ref{l:f1f2} that
\begin{align}
\V\{\E_\al\{(H Y_\al,Y_\al)_\al^{\circ 2}\}\}=O(n^{-4}),  \label{var2}
\end{align}
which leads to  (\ref{varEA}) for $p=2$. To get (\ref{varEA}) for $p=3$, it is enough to show that
\begin{align}
\mathbf{Var}\{\mathbf{E}_\alpha\{(H Y_\al,Y_\al)_{\alpha}^{\circ 3}\}\}=O(n^{-4}).  \label{varF}
\end{align}
 We have
\begin{align*}
\E_\al\{(H Y_\al,Y_\al)_{\alpha}^{\circ 3}\}=&\E_\al\{(H Y_\al,Y_\al)^3\}-\E_\al\{(H Y_\al,Y_\al)\}^3
\\
&-3\E_\al\{(H Y_\al,Y_\al)\}\cdot\E_\al\{(H Y_\al,Y_\al)_\al^{\circ 2}\}
\\
&=\E_\al\{(H Y_\al,Y_\al)^3\}-g_n^{\al 3}-3g_n^\al\cdot\E_\al\{(H Y_\al,Y_\al)_\al^{\circ 2}\}.
\end{align*}
It follows from  (\ref{varg}),  (\ref{var2}), and (\ref{varprod}) with $h_1=g_n^\al$, $h_2=n\E_\al\{(H Y_\al,Y_\al)_\al^{\circ 2}\}$ that
$$
\V\{g_n^\al \cdot n\E_\al\{(H Y_\al,Y_\al)_\al^{\circ 2}\}\}
\le C\big(\V\{g_n^\al \}+\V\{ n\E_\al\{(H Y_\al,Y_\al)_\al^{\circ 2}\}\}\big)=O(n^{-2}).
$$
Hence,
\begin{align*}
\V\{\E_\al\{(H Y_\al,Y_\al)_{\alpha}^{\circ 3}\}\}\le 2\V\Big\{\E_\al\{(H Y_\al,Y_\al)^3\}-g_n^{\al 3}\Big\}+O(n^{-4}),
\end{align*}
and to get (\ref{varF}) for $p=3$ it is enough to show that
\begin{align}
\V\Big\{\E_\al\{(H Y_\al,Y_\al)^3\}-g_n^{\al 3}\Big\}=O(n^{-4}).  \label{F3}
\end{align}
We have
\begin{align}
\E_\al\{(H Y_\al,Y_\al)^3\}=\sum_{\ii,\jj,\pp,\q,\s,\ttt}H_{\ii,\,\jj}H_{\pp,\,\q}H_{\s,\,\ttt}\Lambda(\ii,\jj,\pp,\q,\s,\ttt),\label{croco}
\end{align}
where
\begin{align*}
\Lambda(\ii,\jj,\pp,\q,\s,\ttt)=\prod_{k=1}^2\E_\al\{(y_{\al}^{(k)})_{i_k}(y_{\al}^{(k)})_{j_k}(y_{\al}^{(k)})_{p_k}(y_{\al}^{(k)})_{q_k}
(y_{\al}^{(k)})_{s_k}(y_{\al}^{(k)})_{t_k}\}
\end{align*}
and by (\ref{m6})
\begin{align}
\label{6}
\Lambda(\ii,\jj,\pp,\q,\s,\ttt)=O(n^{-6}).
\end{align}
Also, due to the unconditionality of the distribution,  $\Lambda$ contains only even moments. Thus in the index pairs $\ii,\jj,\pp,\q,\s,\ttt\in[n]^2$, every index (both on the first positions and on the second positions)
 is repeated an even number of times.
 Hence, there are at most $6$ independent indices: $\le 3$ on the first positions (call them $i,j,k$) and $\le 3$ on the second positions (call them $u,v,w$).
  For every fixed set of independent indices, consider maps $\Phi$ from this set to the sets of index pairs $\{\ii,\jj,\pp,\q,\s,\ttt\}$.
   We call such maps  {\it the index schemes}. Let $|\Phi|$
 be the cardinality of the corresponding set of independent indices. For example,
 $$
 \Phi\,:\, \{i,j\,;\,u,v,w\} \rightarrow  \{(i,u),(i,v);\,(i,w),(i,u);\,(j,w),(j,v)\},\quad |\Phi|=5,
 $$
 is an index scheme with $5$ independent indices ($i,j$ on the first positions and $u,v,w$ on the second positions). The inclusion-exclusion principle
 allows to split the expression (\ref{croco}) into the sums over fixed sets of independent indices
 of cardinalities from $2$ to $6$ with the fixed coefficients depending on $a_{2,2,2}$, $a_{2,4}$, and $a_{6}$ in front of every such sum. We have
 \begin{align}
\E_\al\{(H Y_\al,Y_\al)^3\}&=\sum_{\ell=2}^6 S_\ell,\quad
S_\ell=\sum_{\Phi:\,|\Phi|=\ell}\sum H_{\ii,\,\jj}H_{\pp,\,\q}H_{\s,\,\ttt} \Lambda'(\Phi),\label{croco2}
\end{align}
 where the last sum is taken  over the set of independent indices of cardinality $\ell$, $\Phi$ is an index scheme constructing  pairs
 $\{\ii,\jj,\pp,\q,\s,\ttt\}$ from this set,
 and $\Lambda'(\Phi)$ is a certain expression, depending on  $\Phi$, $a_{2,2,2}$, $a_{2,4}$, and $a_{6}$. For example,
 $$
 S_2=F(a_{2,2,2}, a_{2,4},a_{6})\sum_{i,u} (H_{iu,\,iu})^3,
 $$
 where $F(a_{2,2,2}, a_{2,4},a_{6})$ can be found by using the inclusion-exclusion formulas. As to $\Lambda'(\Phi)$ in (\ref{croco2}),
 the only thing we need to know is that
 \begin{align}
\label{l6}
\Lambda'(\Phi)=O(n^{-6}),
\end{align}
and that in the particular case of
$$
 \Phi_{\Tr}\,:\, \{i,j,k\,;\,u,v,w\} \rightarrow  \{(i,u),(i,u);\,(j,v),(j,v);\,(k,w),(k,w)\},
 $$
 we have by (\ref{m6})
  \begin{align*}
\Lambda'(\Phi)=a_{2,2,2}^2=n^{-6}+O(n^{-7}),
\end{align*}
and the corresponding term in $S_6$ has the form $a_{2,2,2}^2(\Tr H)^3$.

 Note that by (\ref{6}), $S_2$ is of the order $O(n^{-4})$. By the same reason
 $$
 \Big|\sum_{\ell=2}^4S_\ell\Big |=O(n^{-2})
 $$
 so that
 $$
 \V\Big\{\Big|\sum_{\ell=2}^4S_\ell\Big|\Big\}=O(n^{-4}).
 $$
Hence to get (\ref{F3}) it suffices to consider terms with $5$ and $6$ independent indices and show that
\begin{align}
\V\{S_5\},\,\,\V\big\{S_6-g_n^{\al 3}\big\}=O(n^{-4}).  \label{S56}
\end{align}

 Consider $S_5$.  In this case we have exactly $5$ independent indices. By the symmetry we can suppose that  there are
two  first independent indices, $i,j$,  and three second independent indices, $u, v, w$, and that we have $i$ on four places and $j$ on two places. Thus, $S_5$
is equal to the sum of terms of the form
\begin{align*}
&S_5'=O(n^{-6})\sum_{i,j,u,v,w}H_{i\cdot,\,i\cdot}H_{i\cdot,\,j\cdot}H_{i\cdot,\,j\cdot}\quad\text{or}\quad
\\
&S_5''=O(n^{-6})\sum_{i,j,u,v,w}H_{i\cdot,\,i\cdot}H_{i\cdot,\,i\cdot}H_{j\cdot,\,j\cdot}\,.
\end{align*}
Here we suppose that there are some fixed indices on the dot-places, which are different from explicitly mentioned ones.
Note that  $S_5'$ has a single "external" pairing with respect to $j$.
While estimating the terms, our argument is essentially based on the simple
relations
\begin{align}
\sum_{j,v}|H_{iu,\,jv}|^2=O(1),\quad  |H_{iu,\,jv}|=O(1), \quad  ||H||=O(1),\label{main}
\end{align}
and on the observation that the more mixing of matrix entries we have the lower order of sums we get.
Let $V\subset \R^n$ be the set of vectors of the form
$$
\xi=\{\xi_j\}_{j=1}^n=\{H_{\cdot\cdot,\,j\cdot}\}_{j=1}^n\quad\text{or}\quad \xi=\{H_{\cdot\cdot,\,\cdot u}\}_{u=1}^n,
$$
and let $W$ be the set of $n\times n$ matrices of the form
$$
D=\{H_{i\cdot,\,j\cdot}\}_{i,j=1}^n,\quad\text{or}\quad D=\{H_{i\cdot,\,\cdot u}\}_{i,u=1}^n,\quad\text{or}\quad
 D=\{H_{\cdot u,\,\cdot v}\}_{u,v=1}^n.
$$
It follows from (\ref{main}) that
$$
\forall\xi\in V\quad ||\xi||=O(1)\quad\text{and}\quad\forall D\in W\quad ||D||=O(1).
$$
Hence,
\begin{align}
&\sum_{j}|H_{\cdot\cdot,\,j\cdot}H_{\cdot\cdot,\,j\cdot}|=O(1),\quad \sum_{u}|H_{\cdot\cdot,\,\cdot u}H_{\cdot\cdot,\,\cdot u}|=O(1), \label{dots}
\\
&\sum_{i,j}H_{i\cdot,\,j\cdot}H_{i\cdot,\,\cdot\cdot}H_{\cdot\cdot,\,j\cdot}=O(1),\quad\text{and}\quad
\sum_{i,u}H_{i\cdot,\,\cdot u}H_{i\cdot,\,\cdot\cdot}H_{\cdot\cdot,\,\cdot u}=O(1).\label{form}
\end{align}
In particular, by (\ref{main}) and (\ref{dots}), we have for $S_5'$
$$
|S_5'|\le O(n^{-6})\sum_{i,u,v,w}\sum_{j}|H_{i\cdot,\,j\cdot}H_{i\cdot,\,j\cdot}|=O(n^{-2}),
$$
so that $\V\{S_5'\}=O(n^{-4})$. Consider $S_5''$. Note that if in $S_5''$ we have a single "external" pairing with respect to at least one index
on the second positions, then similar to $S_5'$, the variance of this term is of the order $O(n^{-4})$. So we are left with the terms of the form
 \begin{align*}
S_5'''=O(n^{-6})\sum_{i,j,u,v,w}H_{iu,\,iu}H_{iv,\,iv}H_{jw,\,jw}.
\end{align*}
It follows from (\ref{gtild}) that
\begin{align*}
S_5'''=O(n^{-1})\cdot g_n^\al(z)\cdot\widetilde{g}_{n}^{(2)}(z,z).
\end{align*}
Now (\ref{varprod}), (\ref{varg}), and (\ref{Vargi}) imply that
\begin{align*}
\V\{S_5'''\}\le C n^{-2}(\V\{g_n^\al\}+\V\{\widetilde{g}_{n}^{(2)}\})=O(n^{-4}).
\end{align*}
Summarizing we get $\V\{S_5\}=O(n^{-4})$.

\medskip

Consider $S_6$ and show that $\V\{S_6-g_n^{\al 3}\}=O(n^{-4})$. In this case we have $6$ independent indices, $i,j,k$ for the first positions
and $u,v,w$ for the second positions.  Suppose that we have two single external pairing with respect to two different first indices and consider terms of the form
\begin{align*}
&S_6'=O(n^{-6})\sum_{i,j,k,u,v,w}H_{i\cdot,\,j\cdot}H_{i\cdot,\,\cdot\cdot}H_{\cdot\cdot,\,j\cdot},
\\
&S_6''=O(n^{-6})\sum_{i,j,k,u,v,w}H_{i\cdot,\,j\cdot}H_{i\cdot,\,j\cdot}H_{\cdot\cdot,\,\cdot\cdot}\,.
\end{align*}
It follows from (\ref{form}) that $S_6'=O(n^{-2})$, hence
$
\V\{S_6'\}=O(n^{-4}).
$
Consider $S_6''$
\begin{align}
&S_6''=O(n^{-6})\sum_{i,j,k,u,v,w}H_{i\cdot,\,j\cdot}H_{i\cdot,\,j\cdot}H_{k\cdot,\,k\cdot}\,.\label{S6}
\end{align}
If the second indices in $H_{k\cdot,\,k\cdot}$ are not equal, then we get the expression of the form
\begin{align*}
&S_6'''=O(n^{-6})\sum_{i,j,k,u,v,w}H_{i\cdot,\,ju}H_{i\cdot,\,j\cdot}H_{k\cdot,\,ku}.
\end{align*}
 It follows from (\ref{form}) that $S_6'''=O(n^{-2})$, hence
$
\V\{S_6'''\}=O(n^{-4}).
$
 If the second indices in $H_{k\cdot,\,k\cdot}$ in (\ref{S6}) are equal, then we get
 the expressions of three types:
\begin{align*}
&O(n^{-6})\sum_{i,j,k,u,v,w}H_{iu,\,jv}H_{iu,\,jv}H_{kw,\,kw}=g_n^\al n^{-4}\sum_{i,j,u,v}(H_{iu,\,jv})^2=O(n^{-2}),
\\
&O(n^{-6})\sum_{i,j,k,u,v,w}H_{iu,\,jv}H_{iv,\,ju}H_{kw,\,kw}=g_n^\al n^{-4}\sum_{i,j,u,v}H_{iu,\,jv}H_{iv,\,ju}=O(n^{-2}),
\\
&O(n^{-6})\sum_{i,j,k,u,v,w}H_{iu,\,ju}H_{iv,\,jv}H_{kw,\,kw}=O(n^{-1})g_n^{(1)}(z,z)g_n^\al(z),
\end{align*}
where we used (\ref{main}) to estimate the first two expressions, so that their variances are of the order $O(n^{-4})$.
It also follows from  (\ref{varprod}), (\ref{varg}), and (\ref{Vargi}) that the variance of the third expression
is  of the order $O(n^{-4})$. Hence, $\V\{S_6'''\}=O(n^{-4})$. It  remains to consider the term without external pairing, which corresponds to
$$
(a_{2,2,2})^2\sum_{i,j,k,u,v,w}H_{iu,\,iu}H_{jv,\,jv}H_{kw,\,kw}=(a_{2,2,2})^2\gamma_n^3
$$
(see (\ref{l6})). Summarizing we get
\begin{align*}
\V\{S_6-g_n^{\al 3}\}&\le2\V\{((a_{2,2,2})^2-n^{-6})\gamma_n^{\al 3}\}+O(n^{-4})
\\
&=O(n^{-2})\V\{g_n^{\al 3}\}+O(n^{-4})=O(n^{-4}),
\end{align*}
where we used (\ref{m6}) and (\ref{varg}). This leads to (\ref{S56}) and completes the proof of the lemma.
\end{proof}

\section{Covariance of the resolvent traces}
\label{s:covariance}

\begin{lemma}
\label{l:cov}
Suppose that the conditions of Theorem \ref{t:main} are fulfilled. Let
$$
C_n(z_1,z_2):=n^{-1}\mathbf{Cov}\{\gamma_n(z_1),\,\gamma_n(z_2)\}=n^{-1}\E\{\gamma_n(z_1)\gamma_n^\circ(z_2)\}.
$$
Then $\{C_n(z_1,z_2)\}_n$ converges uniformly in $z_{1,2}\in K$ to
\begin{align}
\label{limCov}
C(z_1,z_2)&=2(a+b+2)c
\int\frac{f'(z_1)  }{(1+\tau f(z_1))^2}\frac{f'(z_2)  }{(1+\tau f(z_2))^2}\tau^2d\sigma(\tau).
\end{align}
\end{lemma}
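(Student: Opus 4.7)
My strategy is to derive an asymptotic linear equation for $C_n(z_1,z_2)$ from the resolvent identity (\ref{gn=}) and to invert it using the derivative of (\ref{MPE}). Starting from (\ref{gn=}) with $k=2$ gives
\[ z_2\, n\, C_n(z_1,z_2) = -\sum_{\alpha=1}^{m} \mathbf{Cov}\big(\gamma_n(z_1),\, A_\alpha^{-1}(z_2)\big), \]
and I would split $\gamma_n(z_1) = \gamma_n^\alpha(z_1) - B_\alpha(z_1)/A_\alpha(z_1)$ via (\ref{AB}), treating the two resulting sums separately.

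For the $\gamma_n^\alpha$ piece, since $\gamma_n^\alpha$ does not depend on $Y_\alpha$, conditioning gives $\mathbf{Cov}(\gamma_n^\alpha(z_1), A_\alpha^{-1}(z_2)) = \mathbf{Cov}(\gamma_n^\alpha(z_1), \mathbf{E}_\alpha\{A_\alpha^{-1}(z_2)\})$. Iterating (\ref{1/A}) in its $\mathbf{E}_\alpha$-form and controlling the remainder via Lemma \ref{l:aux} gives $\mathbf{E}_\alpha\{A_\alpha^{-1}(z_2)\} = (1+\tau_\alpha g_n^\alpha(z_2))^{-1} + O_{L^2}(n^{-1})$. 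Linearizing in $g_n^\alpha - f$ using the variance bound (\ref{varg}) and replacing $\gamma_n^\alpha$ by $\gamma_n$ (the error is $O(n^{-3/2})$ per $\alpha$ by (\ref{AB})--(\ref{B/A<}) and Lemma \ref{l:varg}) yields
\[ -\sum_\alpha \mathbf{Cov}(\gamma_n^\alpha(z_1), A_\alpha^{-1}(z_2)) = \frac{C_n(z_1,z_2)}{n}\sum_\alpha\frac{\tau_\alpha}{(1+\tau_\alpha f(z_2))^2} + o(n), \]
whose $n\to\infty$ leading behaviour (after eventual division by $n$) produces the self-consistent term $c\, C_n(z_1,z_2)\int \tau (1+\tau f(z_2))^{-2}\,d\sigma(\tau)$.

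For the $B_\alpha/A_\alpha$ piece, I use the conditional decomposition $\mathbf{Cov}(X,Y) = \mathbf{E}\{\mathbf{Cov}_\alpha(X,Y)\} + \mathbf{Cov}(\mathbf{E}_\alpha X, \mathbf{E}_\alpha Y)$. The outer summand is $O(C_n(z_1,z_2)/n^3)$ per $\alpha$ (since $\mathbf{Var}\{g_n^\alpha\}$ is that small by (\ref{varg})) and is hence negligible in sum. The conditional-covariance summand is the source: using $B_\alpha = \partial_{z_1} A_\alpha$ (so $B_\alpha/A_\alpha = \partial_{z_1}\ln A_\alpha$), Taylor-expanding both $\ln A_\alpha(z_1)$ and $A_\alpha^{-1}(z_2)$ in powers of $(A_\alpha)_\alpha^\circ/\mathbf{E}_\alpha A_\alpha$, and applying the polarized form of Lemma \ref{l:form} together with Lemma \ref{l:f1f2}, one obtains
\[ n\,\mathbf{E}\{\mathbf{Cov}_\alpha(B_\alpha/A_\alpha(z_1), A_\alpha^{-1}(z_2))\} \longrightarrow -\frac{2(a+b+2)\tau_\alpha^2\, f'(z_1) f(z_2)}{(1+\tau_\alpha f(z_1))^2 (1+\tau_\alpha f(z_2))^2}, \]
where the coefficient $2(a+b+2)$ originates from the algebraic identity underlying (\ref{limAA}) and the factor $f'(z_1)$ from $\partial_{z_1}[f(z_1)/(1+\tau_\alpha f(z_1))] = f'(z_1)/(1+\tau_\alpha f(z_1))^2$.

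Collecting, dividing by $n$, and letting $n\to\infty$,
\[ \left[z_2 - c\int\frac{\tau\, d\sigma(\tau)}{(1+\tau f(z_2))^2}\right] C_n(z_1,z_2) \longrightarrow -2(a+b+2) c\, f'(z_1) f(z_2) \int\frac{\tau^2\, d\sigma(\tau)}{(1+\tau f(z_1))^2 (1+\tau f(z_2))^2}. \]
Differentiating (\ref{MPE}) yields $f(z)+zf'(z) = cf'(z)\int\tau(1+\tau f(z))^{-2}\,d\sigma(\tau)$, so the bracket on the left equals $-f(z_2)/f'(z_2)$, and dividing produces (\ref{limCov}). The main obstacle will be controlling the various subleading remainders---in particular the quadratic terms $(A_\alpha^\circ)^2/A_\alpha$ arising in the expansion of $A_\alpha^{-1}$ and the replacement errors $\gamma_n^\alpha \to \gamma_n$---and verifying, via the tight $p$th-moment bounds of Lemmas \ref{l:aux} and \ref{l:crocodile}, that they sum to $o(n)$ across the $m = O(n^2)$ indices $\alpha$; the two-parameter polarization of Lemmas \ref{l:form} and \ref{l:f1f2} needed to evaluate the source term is a secondary technical task, for which the setup in Section \ref{s:crocodile} is tailored.
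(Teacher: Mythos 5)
Your plan follows the same route as the paper's own proof: extract $A_\alpha^{-1}$ from the resolvent identity, split $\gamma_n$ as $\gamma_n^\alpha - B_\alpha/A_\alpha$ via (\ref{AB}), linearize to obtain a self-consistent equation for $C_n$ with a source term coming from (\ref{limAA}) (i.e.\ Lemmas \ref{l:form} and \ref{l:f1f2}), and finally invert using the derivative of (\ref{MPE}); the only differences are cosmetic — you apply the resolvent identity to the $z_2$ variable rather than $z_1$ (immaterial by symmetry of the covariance), and you organize the $B_\alpha/A_\alpha$ contribution through the conditional-covariance decomposition $\mathbf{Cov}(X,Y)=\mathbf{E}\{\mathbf{Cov}_\alpha(X,Y)\}+\mathbf{Cov}(\mathbf{E}_\alpha X,\mathbf{E}_\alpha Y)$ whereas the paper repeatedly iterates (\ref{1/A}) and isolates the negligible pieces $S_n^{(2)},S_n^{(3)},S_n^{(4)}$ directly, but both reduce to the same conditional moment estimates. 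The ingredients you cite for the remainder control (Lemmas \ref{l:aux}, \ref{l:crocodile}, \ref{l:varg} and the $m=O(n^2)$ counting) are exactly what the paper uses, so the plan is sound and matches the reference argument.
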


\begin{proof}
For a convergent subsequence $\{C_{n_i}\}$, denote
$$
C(z_1,z_2):=\lim_{n_i\rightarrow\infty}C_{n_i}(z_1,z_2).
$$
We will show that for every converging subsequence, its limit satisfies (\ref{limCov}).
Applying the resolvent identity, we get  (see (\ref{gn=}))
\begin{align}
C_n(z_1,z_2)=&-\frac{1}{nz_1}\sum_\al\E\{A_\al^{-1}(z_1)\gamma_n^\circ(z_2)\}=
-\frac{1}{nz_1}\sum_\al\E\{A_\al^{-1}(z_1)\gamma_n^{\al\circ}(z_2)\}\notag
\\
&
-\frac{1}{nz_1}\sum_\al\E\{A_\al^{-1}(z_1)(\gamma_n-\gamma_n^{\al})^\circ(z_2)\}=:T_{n}^{(1)}+T_{n}^{(2)}.\label{Cn=}
\end{align}
Consider $T_{n}^{(1)}$. Iterating (\ref{1/A}) four times, we get
\begin{align*}
 T_n^{(1)}=\frac{1}{nz_1}\sum_\al\Big[&\frac{\E\{{A_\al}(z_1)\gamma_n^{\al\circ}(z_2)\}}{\E\{A_\al(z_1)\}^2}
 -\frac{\E\{{A_\al^{\circ 2}}(z_1)\gamma_n^{\al\circ}(z_2)\}}{\E\{A_\al(z_1)\}^3}
 +\frac{\E\{{A_\al^{\circ 3}}(z_1)\gamma_n^{\al\circ}(z_2)\}}{\E\{A_\al(z_1)\}^4}
 \\
 &-\frac{\E\{A_\al^{-1}(z_1){A_\al^{\circ 4}}(z_1)\gamma_n^{\al\circ}(z_2)\}}{\E\{A_\al(z_1)\}^4}\Big]=:
 S_n^{(1)}+S_n^{(2)}+S_n^{(3)}+S_n^{(4)}.
\end{align*}
It follows from (\ref{EA>}), (\ref{varg}), and (\ref{varEA}) that $S_n^{(i)}=O(n^{-1/2})$, $i=2,3$. Also, by (\ref{A>})  we have
$$
{\E\{|A_\al^{-1}A_\al^{\circ 4}\gamma_n^{\al\circ}|\}}\le {\E\{(1+|\tau_\al|||Y_\al||^2/|\Im z|)|A_\al^{\circ 4}\gamma_n^{\al\circ}|\}},
$$
 where by the Schwarz
inequality, (\ref{g4<}), (\ref{EA0p<}), and (\ref{12})
$$
{\E\{||Y_\al||^2|A_\al^{\circ 4}\gamma_n^{\al\circ}|\}}\le
\E\{|A_\al^{\circ }|^{6}\}^{1/2}\E\{|A_\al^{\circ }|^{4}\}^{1/4}\E\{\E_\al\{||Y_\al||^8\}|\gamma_n^{\al\circ}|^4\}^{1/4}=O(n^{-3/2}).
$$
Hence $S_n^{(4)}=O(n^{-1/2})$, and we are left with $S_n^{(1)}$. We have
 \begin{align*}
\E\{{A_\al}(z_1)\gamma_n^{\al\circ}(z_2)\}&=\E\{\E_\al\{{A_\al}(z_1)\}\gamma_n^{\al\circ}(z_2)\}
=n^{-2}\tau_\al\E\{\gamma_n^{\al\circ}(z_1)\gamma_n^{\al\circ}(z_2)\}.
 \end{align*}
 It follows from (\ref{AB}) and (\ref{B/A<}) that $|\gamma_n(z)-\gamma^\al_n(z)|\le 1/|\Im z|$. This and (\ref{varg}) yield
 \begin{align*}
|\E\{(\gamma^\al_n-\gamma_n)^\circ(z_1)\gamma_n^{\al\circ}(z_2)\}&+\E\{(\gamma_n^{\circ}(z_1)(\gamma^\al_n-\gamma_n)^\circ(z_2)\}|
\\
&\le \E\{|\gamma_n^{\al\circ}(z_2)|\}/|z_1|+\E\{|\gamma_n^{\circ}(z_1)|\}/|z_2|=O(n^{1/2}).
 \end{align*}
 Hence,
 \begin{align*}
\E\{{A_\al}(z_1)\gamma_n^{\al\circ}(z_2)\}=  n^{-1}\tau_\al   C_n(z_1,z_2)+O(n^{-3/2}),
 \end{align*}
and we have
 \begin{align*}
S_n^{(1)}=C_n(z_1,z_2)\frac{1}{n^2z_1}\sum_\al\frac{\tau_\al}{(1+\tau_\al f_n^\al(z_1))^2}+O(n^{-1/2}).
\end{align*}
Summarizing, we get
\begin{align}
 T_n^{(1)}=C(z_1,z_2)\frac{c}{z_1}\int\frac{\tau d\sigma(\tau)}{(1+\tau f(z_1))^2}+o(1)\label{T1}.
\end{align}
Consider now $T_n^{(2)}$ of (\ref{Cn=}). By (\ref{AB}),
\begin{align}
T_{n}^{(2)}=\frac{1}{nz_1}\sum_\al\E\{A^{-1}_\al(z_1)(B_\al/A_\al)^\circ(z_2)\}.\label{Tn2}
\end{align}
For shortness let for the moment $A_i=A_\al(z_i)$, $i=1,2$, $B_2=B_\al(z_2)$. Iterating (\ref{1/A}) with respect to $A_1$ and $A_2$  two times we get
\begin{align*}
\E\{(1/A_1)^\circ(B_2/A_2)^\circ\}=&\frac{\E\{(-A_1^\circ+A_1^{-1}A_1^{\circ2})(B_2\E\{A_2\}-B_2A_2^\circ+B_2A_2^{-1}A_2^{\circ2})^\circ\}}
{\E\{A_1\}^2\E\{A_2\}^2}
\\
=&\frac{-\E\{A_1^\circ B_2\}\E\{A_2\}+\E\{B_2\}\E\{A_1^\circ A_2\}}
{\E\{A_1\}^2\E\{A_2\}^2}
\\
&\hspace{-3cm}+
\frac{\E\{A_1^\circ B^\circ_2A_2^\circ-A_1^\circ B_2A_2^{-1}A_2^{\circ2}+A_1^{-1}A_1^{\circ2}(B_2\E\{A_2\}-B_2A_2^\circ+B_2A_2^{-1}A_2^{\circ2})^\circ\}}
{\E\{A_1\}^2\E\{A_2\}^2}.
\end{align*}
Applying (\ref{Ayy6}), (\ref{12}), and using bounds
 (\ref{B/A<}), (\ref{A>}), (\ref{EA>}) for $|B_2/A_2|$, $|A_i|^{-1}$, $|\E\{A_i\}|^{-1}$, $i=1,2$, one can show that the terms containing at least three centered factors $A_1^\circ$, $A_2^\circ$, $B_2^\circ$ are of the order $O(n^{-3/2})$. This implies that
\begin{align*}
\E\{(1/A_1)^\circ(B_2/A_2)^\circ\}=\frac{-\E\{A_1^\circ B_2\}\E\{A_2\}+\E\{B_2\}\E\{A_1^\circ A_2\}}
{\E\{A_1\}^2\E\{A_2\}^2}+O(n^{-3/2}).
\end{align*}
Returning to the original notations and taking into account that
$$
B_\al(z)=\partial A_\al(z)/\partial z,
$$
 we get
\begin{align}
\label{Tn22}
\E\{A^{-1}_\al(z_1)(B_\al/A_\al)^\circ(z_2)\}=-\frac{1} {\E\{A_{\alpha }(z_1) \}^2}\frac{\partial}{%
\partial z_2} \frac{\E\{A_{\alpha }^\circ(z_1)A_{\alpha
}^\circ(z_2)\}} {\E\{A_{\alpha }(z_2) \}}+O(n^{-3/2}).
\end{align}
Denote for the moment
$$
D=2(a+b+2).
$$
It follows from (\ref{limAA}) and (\ref{Tn2}) -- (\ref{Tn22}) that
\begin{align*}
 T_n^{(2)}=-\frac{Dc}{z_1}\int\frac{\tau^2 f(z_1) }{(1+\tau f(z_1))^2}\frac{\partial}{%
\partial z_2}\frac{f(z_2)}{1+\tau f(z_2)}d\sigma(\tau)+o(1).
\end{align*}
This and (\ref{Cn=}) -- (\ref{T1}) yield
\begin{align*}
C(z_1,z_2)=\frac{Dc}{
c\int{\tau } {(1+\tau f(z_1))^{-2}}d\sigma (\tau)-z_1}%
\int\frac{\tau^2 f(z_1) }{(1+\tau f(z_1))^2}\frac{\partial}{%
\partial z_2}\frac{f(z_2)}{1+\tau f(z_2)}d\sigma(\tau).
\end{align*}
Note that by (\ref{MPE}),
$$
c\int\frac{\tau d\sigma (\tau) } {(1+\tau f(z))^2}-z=\frac{f(z)}{f'(z)}.
$$
Hence
\begin{align*}
C(z_1,z_2)&=Dc
\int\frac{f'(z_1)  }{(1+\tau f(z_1))^2}\frac{f'(z_2)  }{(1+\tau f(z_2))^2}\tau^2d\sigma(\tau).
\end{align*}
which completes the proof of the lemma.
\end{proof}

\section{Proof of Theorem \ref{t:main}}
\label{s:proof}
The proof essentially repeats the proofs of Theorem $1$ of \cite{Sh:11} and Theorem 1.8 of \cite{GLPP:13}, the technical details are provided by the calculations of the proof of Lemma \ref{l:cov}. It suffices to show that if
\begin{equation}
\label{Z_n}
Z_n(x)=\mathbf{E}\{e_{n}(x)\},\quad e_{n}(x)=e^{ix\mathcal{N}_n^\circ[\varphi]/\sqrt{n}},
\end{equation}%
then we have uniformly in $|x|\le C$
\begin{equation*}  
\lim_{n\rightarrow\infty}Z_n(x)=\exp\{-x^2V[\varphi]/2\}
\end{equation*}
with $V[\varphi]$ of (\ref{Var}).
Define for every test functions $\varphi\in \HH_{s}$, $s>5/2$,
\begin{equation}  \label{phi_y}
\varphi_\eta=P_\eta*\varphi,
\end{equation}
where $P_\eta$ is the Poisson kernel
\begin{equation}  \label{P_y}
P_\eta(x)=\frac{\eta}{\pi(x^2+\eta^2)},
\end{equation}
and "$*$" denotes the convolution. We have
\begin{equation}  \label{appr}
\lim_{\eta\downarrow 0}||\varphi-\varphi_\eta||_{s}=0.
\end{equation}
Denote for the moment the characteristic function (\ref{Z_n}) by $%
Z_n[\varphi]$, to make explicit its dependence on the test function. Take any
converging subsequence $\{Z_{n_j}[\varphi]\}_{j=1}^\infty$
Without loss of generality assume that  the whole sequence $\{Z_{n_j}[\varphi_{\eta}]\}$ converges as $n_j\rightarrow\infty$. By (\ref{apriory}), we have
\begin{equation*}
|Z_{n_j}[\varphi]-Z_{n_j}[\varphi_{\eta}]|\le |x|n^{-1/2}\big(\mathbf{Var}\{\mathcal{N}%
_{n_j}[\varphi] -\mathcal{N}_{n_j}[\varphi_\eta]\}\big)^{1/2}\le
C|x|||\varphi-\varphi_\eta||_{s},
\end{equation*}
hence
$$
\lim_{\eta\downarrow 0}\lim_{{n_j}\rightarrow\infty}(Z_{n_j}[\varphi]-Z_{n_j}[\varphi_{\eta}])=0.
$$
This and the equality $Z_{n_j}[\varphi]=(Z_{n_j}[\varphi]-Z_{n_j}[\varphi_{\eta}])+Z_{n_j}[\varphi_{\eta}]$ imply that
\begin{equation}
\label{limny}
\exists\lim_{\eta\downarrow 0}\lim_{{n_j}\rightarrow\infty}Z_{n_j}[\varphi_{\eta}]
\quad\text{and}\quad
\lim_{{n_j}\rightarrow\infty}Z_{n_j}[\varphi]=\lim_{\eta\downarrow 0}\lim_{{n_j}%
\rightarrow\infty}Z_{n_j}[\varphi_{\eta}].
\end{equation}
Thus it suffices to find the limit of
$$
Z_{\eta n}(x):=Z_{n}[\varphi_{\eta}]=\mathbf{%
E}\{e_{\eta n}(x)\}, \quad\mbox{where}\quad
e_{\eta n}(x)=e^{ix\mathcal{N}_n^\circ[\varphi_\eta ]/\sqrt{n}},
$$
 as ${n} \rightarrow\infty$.
It follows from (\ref{phi_y}) -- (\ref{P_y}) that
\begin{equation}  \label{repr_N}
\mathcal{N}_n[\varphi_ \eta ]=\frac{1}{\pi}\int
\varphi(\mu)\Im\gamma_n(z)d\mu,\quad z=\mu+i\eta .
\end{equation}
This allows to write
\begin{equation}
\frac{d}{dx}Z_{\eta n}(x)=\frac{1}{2\pi}\int \varphi(\mu)(
\YY(z,x)- \YY(\overline z,x))d\mu,  \label{dZ=}
\end{equation}
where
$$
\YY(z,x)=n^{-1/2}\mathbf{E}\{\gamma_n(z)e_{\eta n}^\circ(x)\}.
$$ 
Since $|\YY(z,x)|\le2n^{-1/2}\mathbf{Var}\{\gamma_n(z)\}^{1/2}$, it follows from the proof of Lemma \ref{l:apriory} that  for every $\eta>0$ the integrals of $|\YY(z,x)|$ over $\mu$ are uniformly bounded in $n$. This and the fact that $\varphi\in L^2$ together with Lemma \ref{l:Vitali} below show that to find the limit of integrals in (\ref{dZ=})  it is enough to find the pointwise limit of $\YY(\mu+i\eta,x)$.
 We have
\begin{align*}
\YY(z,x)&=-\frac{1}{zn^{1/2}}\sum_{\alpha=1}^m\big[\mathbf{E}\{A^{-1}_{\alpha}(z)e_{\eta n}^{\al\circ}(x)\} -
\mathbf{E}\{A^{-1}_{\alpha}(z)(e^\circ_{\eta n}(x)-e_{\eta n}^{\al\circ}(x))\}\big],
\end{align*}
where $e_{\eta n}^{\al}(x)=\exp\{ix\mathcal{N}_n^{\alpha\circ}[\varphi_\eta ]/\sqrt{n}\}$ and
$\mathcal{N}_n^{\alpha}[\varphi_\eta ]=\Tr \varphi_\eta (M^\alpha)$.
By (\ref{repr_N}),
\begin{align*}
e_{\eta n}-e_{\eta n}^\alpha=&\frac{ixe_{\eta n}^\alpha}{\sqrt{n}\pi}\int _{  %
}\varphi(\lambda_1)\Im ( \gamma
_{n}-\gamma_{n}^{\alpha })^\circ(z_1)d\lambda_1
\\
&+O\Big(\Big|\frac{1}{\sqrt{n}}\int \varphi(\lambda_1)\Im ( \gamma
_{n}-\gamma_{n}^{\alpha })^\circ(z_1)d\lambda_1\Big|^2\Big),
\end{align*}
so that
\begin{align*}
\mathbf{E}\{A_{\alpha n}^{-1}(z)(e_{\eta n}-e_{\eta n}^\alpha)^\circ(x)\}=&\frac{ixe_{\eta n}^\alpha}{\sqrt{n}\pi}\int _{  %
}\varphi(\lambda_1)\Im ( \gamma
_{n}-\gamma_{n}^{\alpha })^\circ(z_1)d\lambda_1  \notag \\
&+\int \int
O(R_n)\varphi(\lambda_1)%
\varphi(\lambda_2)d\lambda_1 d\lambda_2,
\end{align*}
where $z_j=\lambda_j+i\eta$, $j=1,2$, and
\begin{align*}
R_n= n^{-1}\mathbf{E}\{(A_{\alpha n}^{-1})^\circ(z) \Im (B_{\alpha
n}A_{\alpha n}^{-1})^{\circ}(z_1) \Im (B_{\alpha n}A_{\alpha
n}^{-1})^{\circ}(z_2)\}.
\end{align*}
Using the argument of the proof of the Lemma \ref{l:cov}, it can be shown that $R_n= O(n^{-5/2})$. Hence,
\begin{align*}
\YY(z,x)=&-\frac{1}{zn^{1/2}}\sum_{\alpha=1}^m\mathbf{E}\{A^{-1}_{\alpha}(z)e_{\eta n}^{\al\circ}(x)\}
\\
&-\frac{ix}{zn\pi}\int \varphi(\lambda_1)\sum_{\alpha=1}^m
\mathbf{E}\{e_{\eta n}^\alpha(x) (A^{-1}_{\alpha}(z))^{\circ}
\Im ( \gamma_{n}-\gamma_{n}^{\alpha })^\circ(z_1)\}d\lambda_1+O(n^{-1}).
\end{align*}
Treating the r.h.s.  similarly to $T_n^{(1)}$ and $T_n^{(2)}$  of (\ref{Cn=}), we get
\begin{align}
\YY(z,x)=\frac{xZ_{\eta n}(x)}{2\pi }\int   \varphi(%
\lambda_1) \;[C(z,z_1)-C(z,\overline{z_1})]d\lambda_1+o(1),  \label{Yn=}
\end{align}
where $C(z,z_1)$ is defined in (\ref{limCov}). It follows from (\ref%
{dZ=}) and (\ref{Yn=}) that
\begin{align}
\frac{d}{dx}Z_{\eta n}(x)={-xV_{\eta }[\varphi]Z_{\eta n}(x)}+o(1),  \label{dZ}
\end{align}
(see (\ref{Var})) and finally
\begin{align*}
\lim_{n\rightarrow\infty}Z_{\eta n}(x)=\exp\{{-x^2V_{\eta }[\varphi]}/2\}.
\end{align*}
Taking into account (\ref{limny}), we pass to the limit
$\eta \downarrow 0$ and complete the proof of the theorem.
\qed

\medskip

It remains to prove the following lemma.
\begin{lemma}
\label{l:Vitali}
Let $g\in L^2(\R)$ and let $\{h_n\}\subset  L^2(\R)$ be a sequence of complex-valued functions such that
\begin{align*}
\int |h_n|^2dx<C\quad \text{and}\quad h_n\rightarrow h \quad a.e.\quad \text{as}\quad {n\rightarrow\infty},
\quad \text{where}\quad|h(x)|\le \infty\quad a.e.
\end{align*}
Then $$\int g(x)h_n(x)dx\rightarrow\int g(x)h(x)dx\quad \text{as}\quad n\rightarrow\infty.$$
\end{lemma}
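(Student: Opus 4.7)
The plan is to view this as a special case of weak convergence in $L^2(\R)$: once we know $h\in L^2$, the statement reduces to showing $\int g(h_n-h)\,dx\to 0$ for every $g\in L^2$, which is the definition of weak convergence tested against $g$. The approach will be a standard $\epsilon/3$ argument combining a tail truncation, Egorov's theorem on a finite-measure window, and absolute continuity of the $L^2$ norm of $g$.

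First I would establish that $h\in L^2(\R)$ by applying Fatou's lemma to $|h_n|^2$:
\begin{equation*}
\int |h|^2\,dx=\int \liminf_n|h_n|^2\,dx\le\liminf_n\int|h_n|^2\,dx\le C,
\end{equation*}
which in particular confirms $|h|<\infty$ a.e.\ and guarantees that $\|h_n-h\|_2\le 2\sqrt{C}$. Then I would fix $\epsilon>0$ and split
\begin{equation*}
\int g(h_n-h)\,dx=I_1^{(n)}+I_2^{(n)}+I_3^{(n)},
\end{equation*}
where $I_1^{(n)}$ is the integral over $\{|x|>R\}$, and $I_2^{(n)}$, $I_3^{(n)}$ are the integrals over subsets $E$ and $[-R,R]\setminus E$ of $[-R,R]$, with $R$ and $E$ still to be chosen.

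For $I_1^{(n)}$, I would pick $R$ large enough that $\int_{|x|>R}|g|^2\,dx<\epsilon^2$; Cauchy--Schwarz then gives $|I_1^{(n)}|\le 2\sqrt{C}\,\epsilon$. Next, since $[-R,R]$ has finite Lebesgue measure and $h_n\to h$ a.e., Egorov's theorem provides a measurable set $E\subset [-R,R]$ of arbitrarily small measure such that $h_n\to h$ uniformly on $[-R,R]\setminus E$. Absolute continuity of the Lebesgue integral applied to the finite measure $|g|^2\,dx$ lets me choose $|E|$ small enough that $\|g\chi_E\|_2<\epsilon$, so another Cauchy--Schwarz estimate yields $|I_2^{(n)}|\le 2\sqrt{C}\,\epsilon$. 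Finally, on the "good" set,
\begin{equation*}
|I_3^{(n)}|\le \|g\|_2\,(2R)^{1/2}\sup_{x\in[-R,R]\setminus E}|h_n(x)-h(x)|\longrightarrow 0
\end{equation*}
as $n\to\infty$ by uniform convergence, so $\limsup_n|\int g(h_n-h)\,dx|\le 4\sqrt{C}\,\epsilon$. Letting $\epsilon\downarrow 0$ finishes the proof.

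The only mildly delicate step is the Egorov truncation, which is exactly why one cannot simply apply dominated convergence directly (there is no pointwise dominating $L^2$ majorant for $g h_n$); the rest is bookkeeping with Cauchy--Schwarz and absolute continuity.
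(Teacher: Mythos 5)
Your proof is correct, but it takes a genuinely different route from the paper's. The paper changes measure to $d\mu = |g|^2\,dx$ (assuming WLOG $g\ne 0$ a.e.), sets $F_n = g h_n/|g|^2$, verifies finiteness of $\mu$, uniform integrability of $\{F_n\}$ via Cauchy--Schwarz, and a.e.\ convergence, and then invokes Vitali's convergence theorem in one stroke to get $\int|F_n - F|\,d\mu \to 0$, which is exactly $\int |g||h_n - h|\,dx \to 0$. You instead first establish $h\in L^2$ by Fatou (which the paper obtains only as a by-product of Vitali's conclusion $F\in L^1(\mu)$), and then run an explicit three-piece $\epsilon$-argument: a spatial tail cut on $\{|x|>R\}$ controlled by Cauchy--Schwarz and absolute continuity of $\int|g|^2$, an Egorov exceptional set $E\subset[-R,R]$ again controlled by absolute continuity, and a uniform-convergence estimate on the remaining good set. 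Your argument is essentially the classical proof of Vitali's theorem unrolled in this specific setting, so it is more self-contained and avoids quoting a named theorem, at the cost of more bookkeeping; the paper's is shorter precisely because Vitali's theorem packages the Egorov-plus-absolute-continuity machinery. Both are sound, and your Fatou step cleanly dispatches the slightly odd hypothesis ``$|h(x)|\le\infty$ a.e.''\ by actually showing $h\in L^2$.
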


\begin{proof}
According to the  convergence theorem of Vitali (see e.g. \cite{Rudin}), if $(X,\mathcal{F},\mu)$ is a positive measure space and
\begin{align*}
&\mu(X)<\infty,
\\
&\{F_n\}_n\quad \text{is uniformly integrable},
\\
&F_n\rightarrow F \quad a.e.\quad \text{as}\quad {n\rightarrow\infty},
\quad|F(x)|\le \infty\quad a.e.,
\end{align*}
then $F\in L^1(\mu)$ and $\lim_{n\rightarrow\infty}\int_X |F_n-F|d\mu=0$. Without loss of generality  assume that $g(x)\neq 0$, $x\in \R$, and take
$$
d\mu(x)=|g(x)|^2dx,\quad F_n=gh_n/|g|^2, \quad F=gh/|g|^2.
$$
Then
\begin{align*}
&\mu(\R)=\int |g(x)|^2dx<\infty,
\\
&\int_E|F_n(x)|d\mu(x)
\le||h_n||_{L^2}\Big(\int|g(x)|^2dx\Big)^{1/2}\le C(\mu(E))^{1/2},
\\
&F_n\rightarrow F \quad a.e.\quad \text{as}\quad {n\rightarrow\infty},
\quad|F(x)|\le \infty\quad a.e.
\end{align*}
Hence, the conditions of the Vitali's theorem are fulfilled and we get
$$
\lim_{n\rightarrow\infty}\int |F_n-F|d\mu=\lim_{n\rightarrow\infty}\int |h_n-h||g|dx=0,
$$
which completes the proof of the lemma.
\end{proof}

\medskip

{\bf Acknowledgements.} The author would like to thank Leonid Pastur for introducing to the problem and for the fruitful discussions.

\end{document}